\def\adh#1{\overline{#1}}
\newtheorem {pro}{Proposition}[section]
\newtheorem {thm}[pro]{Theorem}
\newtheorem {cor}[pro]{Corollary}
\newtheorem{lem}[pro]{Lemma}
\theoremstyle{definition}
 \newtheorem {rem}[pro]{Remark}
\newtheorem {dfn}[pro]{Definition}
\DeclareMathOperator*{\argmin}{arg\,min}
\newcommand{\tra}{\mathbf{tr}}
\newcommand{\R}{\mathbb{R}}
\newcommand{\N}{\mathbb{N}}
\newcommand{\cc}{\mathscr{C}}\newcommand{\cbb}{\mathbf{C}}
\newcommand{\st}{\mathscr{S}}
\newcommand{\et}{\quad \mbox{and} \quad }
\newcommand{\hn}{\mathcal{H}}
\newcommand{\mba}{ {\overline{M}}}
\newcommand{\wbf}{\mathbf{W}}\newcommand{\nub}{\mathbf{\nu}}\newcommand{\gab}{\mathbf{\gamma}}\newcommand{\gub}{\gab_\nub}
\newcommand{\lbf}{\mathbf{L}}
\newcommand{\ubf}{\mathbf{u}}
\newcommand{\ep}{\varepsilon}
\newcommand{\pa}{\partial}
\newcommand{\orn}{{0_{\R^n}}}
\newcommand{\pau}{\pa}
\newcommand{\pad}{\pa}
\newcommand{\xo}{{x_0}}
\newcommand{\mc}{{\check{M}}}
\newcommand{\supp}{\mbox{\rm supp}}
 \thanks{Research partially supported
by the NCN grant  2021/43/B/ST1/02359.}
\title[]{On the Laplace equation on bounded subanalytic manifolds}
\author[ G. Valette]{ Guillaume Valette}
\address[G. Valette]{Instytut Matematyki Uniwersytetu
Jagiello\'nskiego, ul. S. \L ojasiewicza 6, Krak\'ow, Poland}\email{guillaume.valette@im.uj.edu.pl}
\keywords{ Sobolev spaces, boundary value problem, Laplace equation, $p$-Laplacian, singular domains, subanalytic sets.}
\subjclass[2020]{ \it {Primary:}
 32B20, 
35J25, 
35J05, 
35J92. 
{\it Secondary:}
35A01, 
35A02, 
 46E35, 
35D30. 
 }
\begin{document}

\begin{abstract}
We prove a trace formula for integration by parts on subanalytic bounded submanifolds of $\R^n$, possibly non closed. We also establish density results for $\mathbf{W}^{1,p}_\nabla (M)$, $M$ bounded subanalytic manifold, which is the space of the $L^p$ tangent vector fields $v$ on $M$ for which $\nabla v$ is $L^p$, where $\nabla$ is the divergence operator. We derive from these results some theorems of existence and uniqueness of solutions of the Laplace equation  with Dirichlet and Neumann boundary type conditions. We then study the $p$-Laplace equation, for $p\in [1,\infty)$ large.
\end{abstract}

\maketitle
\begin{section}{Introduction}The recent works on the Lipschitz geometry of subanalytic sets \cite{m, pa, vlt, gvpoincare, lipsomin, projreg, livre} enabled to develop the theory of Sobolev spaces of subanalytic manifolds \cite{poincfried, poincwirt,trace, lprime}, which makes it possible to investigate partial differential equations on such manifolds. 
In this article, we illustrate this by giving some theorems of existence and uniqueness  of solutions of the Laplace equation on singular varieties.

 Although subanalytic manifolds may fail to be metrically conical at frontier points, several basic theorems, such as Sobolev or Morrey's embedding, Poincar\'e inequality, trace theorems, and density of smooth compactly supported functions, were proved to have natural generalizations to the Sobolev space $W^{1,p}$ of such a manifold, with sometimes some restrictions on $p$. An application to the study of the Laplace equation, with Dirichlet boundary conditions, was already provided in \cite{lprime}.  We give in this article a study of the Neumann condition.

We first focus on the Sobolev space $W^{1,p}(M)$  of a bounded subanalytic submanifold $M\subset \R^n$ as well as on the space $\wbf_\nabla^{1,p}(M)$ of the $L^p$ tangent vector fields $v$ on $M$ for which $\nabla v$ is $L^p$, where $\nabla$ is the divergence operator, showing some density theorems for ``nicely supported functions'' in this space (section $3$). The idea is  to derive these results from the density theorems of \cite{trace, lprime} (see section $2$) by duality.  Using classical arguments \cite{t}, we then deduce existence and uniqueness of a cotrace operator $\gub$ and prove a formula for integration by parts involving the trace operator of the regular part of the boundary (identity (\ref{eq_trace_theorem})). As in \cite{lprime}, we nevertheless have to restrict ourselves to the values of $p$ that  are not greater than the codimension of the singularities of the boundary (see Remark \ref{rem_traceformula}). The singularities of the boundary are always of codimension at least $2$, which makes it possible to cover the case $p=2$, which is crucial to investigate partial differential equations.

We therefore give applications to the classical problem 
   $$\begin{cases}
 \Delta u=g \qquad \qquad\mbox{ on } M,\\	
 \tra_{\pa M} u= \tra_{\pa M} f \;\; \mbox{ on } \Gamma_D, \\
\; \gab_\nub \pa u=\theta \qquad \;\quad \mbox{ on } \Gamma_N,
\end{cases}$$
where $\Delta$ is the Laplacian, $\tra_{\pa M}$ is the trace operator on the regular part of the frontier of $M$ (see section $2$ for $\pa M$) and $\gub$ is the cotrace operator that we construct in section $4$ (Theorem \ref{thm_trace_formula}).  Here, $\Gamma_D$ and $\Gamma_N$ are open subanalytic subsets of $\pa M$ such that $\dim \pa M\setminus (\Gamma_D\cup \Gamma_N)\le\dim M-2$. We establish existence and uniqueness of the solution in $W^{1,2}(M)$ (Theorem \ref{thm_neumanndir}).

 The subanalytic category is valuable for applications for it encloses the trigonometric and exponential functions, as well as all the sets defined by polynomial equalities and inequalities, which naturally emerge from engineering problems.

 Many mathematical problems that emanate from engineering, especially from machine learning \cite{bn,bert, bishop, buhl,elmo,elmo2,zhu,luxb}, involve finding a function that takes prescribed values on a given set while minimizing the $L^p$ norm of its gradient, i.e. of determining, given a sufficiently regular function $f$ on a subset $A$ of $\mba$,
 \begin{equation}\label{eq_argmin_intro}\argmin_{u\in W^{1,p}(M), u= f \mbox{ on $A$ }} ||\pa u||_{L^p(M)}.\end{equation}
 We thus end this article by showing existence and uniqueness of the solution to this optimization problem in our framework for every $p$ sufficiently large (Theorem \ref{thm_opt}).  It is well-known \cite{degen} that, on a manifold $M$ that has sufficiently regular boundary, the solutions of the nonlinear equation  $\Delta_p u=0$, where $\Delta_p$ is the $p$-Laplacian, are the functions $u$ for which $||\pa u||_{L^p(M)}$ is minimal among all the functions that have the same trace as $u$. We show that this property holds on any bounded subanalytic manifold $M$ for sufficiently large values of $p$, even if $M$ admits singularities in its closure.

 It seems that one could extend the results to the case where $p$ is not greater than the codimension of the singularities. The case ``$p$ large'' seems however to be the most relevant for applications, for the convergence of a sequence in the $W^{1,p}$ norm guarantees uniform convergence and H\"older continuity of the limit, via Morrey's embedding \cite{lprime}. Furthermore, it enables to prescribe the values of $u$ on a set $A\subset \mba$ of any codimension.
It was actually established \cite{alam,elal,nadl} that the convergence of the methods that provide solutions to this kind of problems demands to choose $p\ge \dim M$ or $\dim M+1$. Our work, together with the results of \cite{lprime} (or rather their proofs), emphasizes that higher values of $p$ will be necessary in the case where the underlying manifold $M$ admits singularities within its closure.  The convergence seems to be actually more correlated to the Lipschitz geometry of the pair $(M,A)$ than to the dimension of $M$. For instance, if  $M$ is  the set $x^2+y^2=z^k$,  $z>0$ (in $\R^3$, with $k>2$), then $p> k$ should be a more relevant choice.

For simplicity we assume that $M$ is a connected.
 We refer the reader to  \cite{bm, ds, l, livre} for all the basic facts about subanalytic geometry. The results of this article are actually valid for all bounded manifolds that are definable in some polynomially bounded o-minimal structure expanding the field $\R$ \cite{costeomin,vdd}.

\section{Some notations, conventions, and basic facts}
\begin{subsection}{Main notations and conventions.}
 Throughout this article,   the letter 
$M$  stands for a  bounded connected oriented subanalytic $\cc^\infty$ submanifold of $\R^n$, for some $n\in \N$, and $m$ for its dimension.
   We set $\dim \emptyset=-\infty$.

By ``manifold'', we will always mean submanifold of $\R^n$. Except otherwise specified, a submanifold of $\R^n$ will always be endowed with its canonical measure, provided by volume forms, and we will not indicate the measure when integrating on a submanifold of $\R^n$ with respect to this measure. Here is a glossary of the main notations.

$x\cdot y$  and $|x|$ :  euclidean inner product of $x$ and $y$ and euclidean norm of $x$
    
 $\adh A$ : closure of a set $A\subset \R^n$
 
 $\delta A:=\adh A\setminus A$, if $A\subset \R^n$

 $||u||_{L^p(M)}$  : $L^p$ norm  of  $|u|$ (with respect to the measure of $M$)
 
  $L^p(M)$ (resp. $\lbf^p(M)$) : space of $L^p$ measurable functions on $M$ (resp.  vector fields on $M$ tangent to $M$)

 $\nabla \beta$ : divergence of a tangent vector field $\beta$ on $M$, defined as $\nabla \beta=*d *\beta$, where $d$ is the exterior differentiation (as current) and $*$ the Hodge operator

    $\supp\, u$ : support of a current $u$ on $M$

      $p'$ : the H\"older conjugate of $p\in (1,\infty)$, i.e., $p'=\frac{p}{p-1}$, $1'=\infty$, and $\infty'=1$
    
   
 $\cc^\infty(E)$ (resp. $\cbb^{\infty}(E)$)   : for $E\subset \mba$,  space of those functions (resp. tangent vector fields) on $E\cap M$ that smoothly extend to an open neighborhood of $E$ in $\R^n$

  $\cc_0^\infty(M)$ (resp. $\cbb_0^{\infty}(M)$) :  space of elements of $\cc^\infty(M)$ (resp. $\cbb^{\infty}(M)$)  that are compactly supported

We set for $p\in [1,\infty]$:
    $$W^{1,p}(M):= \{u\in L^p(M),\; \partial u \in \lbf^p(M)\},$$
where  $\pa u$ stands for the  gradient of $u$,
     as well as
     $$\wbf_\nabla^{1,p} (M):=\{\beta\in \lbf^{p}(M):\nabla \beta \in L^p(M) \}. $$
We endow these two spaces with the norms:
$$||u||_{W^{1,p}(M)}:= ||u||_{L^p(M)}+||\pa u||_{L^p(M)} \et ||\beta||_{\wbf^{1,p}_\nabla(M)}:=||\beta||_{L^p(M)}+||\nabla \beta||_{L^p(M)}.$$
We also set
        $$\wbf^{1,\infty}(M):= \{u\in \lbf^\infty(M),\; \partial u \in \lbf^\infty(M)^n\}.$$

 By definition, $\cc^\infty(\mba)$ is a subset of $W^{1,p}(M)$.  In particular, for $u\in \cc^\infty(\mba)$,  $\supp\, u$ will be a subset of $M$ and
   we set for $V\subset \mba$ $$\cc^\infty_{V}(\mba):=\{u\in \cc^\infty(\overline{M}):\overline{ \supp\, u} \subset V  \}.$$
The spaces $\cbb^{\infty}_{V}(\mba)$, $W^{1,p}_V (M)$, $\wbf^{1,p}_V (M)$, $\lbf^p_V(M)$, and $\wbf_{\nabla,V}^{1,p} (M) $ are defined analogously.

 Given any couple of measurable functions or tangent vector fields $u$ and $v$ on $M$ for which suitable integrability conditions hold, we set:
$$<u,v>:=\int_M u\cdot v .$$

  $\hn^k$ : $k$-dimensional Hausdorff measure

%
%

\end{subsection}

 \subsection{Sobolev spaces with Dirichlet boundary conditions.} Let $A\subset\delta M$. 
If $u\in W^{1,p}(M)$ is zero in the vicinity of $A$ and if $\beta\in \wbf_\nabla^{1,p'}(M)$ is zero in the vicinity of $\delta M\setminus A$, it then easily follows from Stokes' formula that we have 
\begin{equation}\label{eq_vanishing}
 <\pa u, \beta>=-< u,\nabla\beta> .
\end{equation}
For this reason, this equality is often used to express the vanishing of functions and vector fields on the boundary in the weak sense. Let us set for $p\in[1,\infty]$:
 $$W^{1,p}(M,A) :=\{u\in W^{1,p}(M): \mbox{(\ref{eq_vanishing}) holds for all } \beta \in \wbf^{1,p'}_{\nabla,M\cup A}(M) \}.$$
 
Similarly, let
$$\wbf_\nabla^{1,p}(M,A) :=\{\beta\in \wbf_\nabla^{1,p}(M): \mbox{(\ref{eq_vanishing}) holds for all }  u\in W_{M\cup A}^{1,p'}(M) \}, $$
and note that  \begin{equation*}\label{eq_WMCUPA} W^{1,p}_{\mba\setminus  A}(M) \subset W^{1,p}(M,A) \et \wbf^{1,p}_{\nabla, \mba\setminus  A}(M) \subset \wbf_\nabla^{1,p}(M,A) .\end{equation*}
 
It is also worthy of notice that the mapping $\pa :W^{1,p}(M,A)\to \lbf^p(M)$ may fail to be injective even if $A\ne \emptyset$, i.e. that constant nonzero functions may vanish on a nonempty subset  of $\delta M$, and that this depends on $p$.  For instance, if $p\le 2$ and $\dim A\le m-2$ (and $A$ subanalytic) then Theorem \ref{thm_dense_lprime} below shows that  $\pa :W^{1,p}(M,A)\to \lbf^p(M)$ is not injective, while, if $p$ is large enough, it is injective for all nonempty subsets $A$ of $\pa M$ (we recall that $M$ is assumed  to be connected).

\subsection{Some basic facts.}
The following theorem, which extends to subanalytic domains one of the most famous features of Lipschitz domains, was established in \cite{lprime}.
\begin{thm}\label{thm_derlp_implique_lp}
 Let $u$ be a distribution on $M$ and let $p\in [1,\infty)$. If $|\pa u|$ is $L^p$ then so is $u$.
\end{thm}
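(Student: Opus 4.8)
The plan is to reduce the global statement to a local one near the frontier $\delta M$, and then to exploit the Lipschitz geometry of subanalytic sets to reduce the local statement to a one-dimensional Hardy-type inequality. First I would observe that the interior estimate is classical: away from $\delta M$, the manifold $M$ is locally a bounded open subset of $\R^m$ with smooth boundary, so Poincar\'e--type control of $u$ by $\pa u$ in $L^p$ holds by standard elliptic/Sobolev theory, and a partition of unity subordinate to a cover of a compact neighborhood of any interior point finishes that part. The real content is therefore an estimate near $\delta M$, and since $M$ is bounded we only need, by compactness of $\mba$, to work in a small neighborhood of a single frontier point $x_0\in\delta M$.

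Near $x_0$, I would invoke the Lipschitz-geometric description of subanalytic sets — a subanalytic bi-Lipschitz triviality or the ``cone-like'' / ``$L$-regular decomposition'' structure (the same machinery behind the Poincar\'e inequalities of \cite{poincfried, poincwirt} and the trace theorems of \cite{trace, lprime}) — to write $M\cap B(x_0,r)$, up to a subanalytic bi-Lipschitz homeomorphism, as a disjoint union of finitely many ``standard'' pieces, each of which fibers over a base with fibers that are intervals (or more generally over which one can integrate in a radial/transverse direction toward the frontier). A bi-Lipschitz change of variables distorts $|u|$, $|\pa u|$, and the volume form by bounded factors, hence preserves the class of the statement (possibly changing constants), so it suffices to prove the implication on one such standard piece. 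On a standard piece, writing a point as $(t,y)$ with $t\in(0,\epsilon)$ the distance-to-frontier parameter and $y$ in a bounded base, one has $u(t,y)-u(t_0,y)=\int_{t_0}^t \pa_t u(s,y)\,ds$; integrating this identity and applying Minkowski's integral inequality together with a Hardy inequality in the $t$-variable bounds $\|u\|_{L^p}$ on the piece by $\|\pa u\|_{L^p}$ plus the $L^p$ norm of the ``trace at level $t_0$'', and one disposes of the latter term by integrating $t_0$ over a positive-measure set (or by a telescoping argument), using that the total volume is finite.

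The point requiring care — and the main obstacle — is precisely that subanalytic manifolds need \emph{not} be metrically conical at frontier points, so the fibers of the radial fibration can have wildly varying lengths and the base can itself degenerate as $t\to 0$; this is what forces the subanalytic (rather than merely Lipschitz) hypothesis and is exactly the delicate estimate behind the results cited in the introduction. I expect the clean way through is to quote the relevant structure theorem from \cite{livre} (or \cite{poincwirt}) giving a subanalytic bi-Lipschitz model in which the radial direction is genuinely ``Lipschitz transverse'' to the frontier, so that the Hardy inequality applies with uniform constants on each piece, and then to sum the finitely many pieces and patch with a subanalytic partition of unity on $\mba$. A secondary technical point is the reduction from ``$u$ a distribution with $\pa u\in L^p$'' to ``$u\in L^1_{loc}$ with an almost-everywhere representative that is absolutely continuous on almost every fiber'': this is handled by mollification in the interior and the observation that $\pa u\in L^p\subset L^1_{loc}$ already forces $u$ to be (locally) a Sobolev function in the ordinary sense, so the fiberwise fundamental theorem of calculus is legitimate.
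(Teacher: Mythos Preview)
The paper does not give a proof of this theorem: it is stated with the attribution ``was established in \cite{lprime}'' and is used as a black box (to derive Corollary~\ref{cor_pa_nonrel} and the Poincar\'e--Wirtinger inequality~(\ref{eq_poinc_wirt})). So there is no in-paper proof to compare against; the paper's ``proof'' is the citation.

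Your sketch is in the spirit of what the cited reference \cite{lprime} (and \cite{poincwirt,poincfried}) actually do: reduce to a local model near $\delta M$ via subanalytic Lipschitz geometry (L-regular cells / Lipschitz stratifications), and then run a one-dimensional integration argument along a transverse direction. As a sketch it is plausible, but note two points. First, your interior sentence is slightly off: away from $\delta M$ you are simply on a smooth $m$-manifold without boundary, so there is no ``bounded open subset of $\R^m$ with smooth boundary'' in sight, and the interior part is just the trivial statement that $u\in L^p_{loc}(M)$ once $\pa u\in L^p_{loc}(M)$; the global $L^p$ bound genuinely comes from the frontier analysis and the finite volume of $M$. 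Second, the ``main obstacle'' you identify is exactly the point: in the subanalytic (non-conical) setting the Jacobian of the radial fibration is not bounded below, so a naive Hardy inequality fails and one must use the finer structure theorems of \cite{livre,lprime} to obtain weights for which the Hardy-type estimate goes through. Your proposal names the right tools but does not carry out this step; a referee would regard it as a correct outline rather than a proof, and would point you to \cite{lprime} for the details.
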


This theorem makes it possible to derive the following fact.

\begin{cor}\label{cor_pa_nonrel}
 The mapping $\pa:W^{1,p}(M)\to \lbf^p(M)$, $u\mapsto \pa u$, has closed image for all $p\in [1,\infty)$.
\end{cor}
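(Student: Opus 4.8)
The plan is to deduce that the image $\pa(W^{1,p}(M))$ is closed in $\lbf^p(M)$ from the open mapping theorem (or rather its consequence about operators with closed range), using Theorem \ref{thm_derlp_implique_lp} as the crucial ingredient that controls $u$ in terms of $\pa u$. The key point is that Theorem \ref{thm_derlp_implique_lp} provides, a priori, only a \emph{set-theoretic} implication (``if $|\pa u|$ is $L^p$ then so is $u$''), but by the closed graph theorem this automatically upgrades to a \emph{quantitative} estimate.

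First I would introduce the closed subspace $N:=\{u\in W^{1,p}(M):\pa u=0\}$, which, since $M$ is connected, consists of the constant functions and is therefore at most one-dimensional (and is in fact one-dimensional because $M$ is bounded, so constants lie in $L^p(M)$). Thus $N$ is finite-dimensional, hence complemented, and the quotient $W^{1,p}(M)/N$ is a Banach space on which $\pa$ induces an injective bounded linear map $\overline{\pa}:W^{1,p}(M)/N\to\lbf^p(M)$ with the same image. It suffices to show $\overline{\pa}$ has closed range, for which it is enough to establish the lower bound $\|u\|_{L^p(M)}\le C\,\|\pa u\|_{L^p(M)}$ for all $u$ in a fixed complement of $N$ (equivalently, on the quotient): this gives $\|u\|_{W^{1,p}(M)}\le (C+1)\|\pa u\|_{L^p(M)}$ there, so $\overline{\pa}$ is bounded below, hence a topological embedding, hence has closed image.

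To obtain that estimate I would argue by contradiction via the closed graph theorem. Consider the Banach space $E:=\{u\in L^p(M):\pa u\in\lbf^p(M)\}$ — which is just $W^{1,p}(M)$ again — versus the space $F$ of distributions $u$ on $M$ with $\pa u\in\lbf^p(M)$, normed by $\|u\|_F:=\|u\|_{L^p(M)}+\|\pa u\|_{L^p(M)}$ on the subset where this is finite; the content of Theorem \ref{thm_derlp_implique_lp} is precisely that the natural inclusion of the space $\{u\ \text{distribution on } M:\pa u\in\lbf^p(M)\}$ (with norm $\|\pa u\|_{L^p}$, modulo constants) \emph{into} $W^{1,p}(M)/N$ is a well-defined linear bijection. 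This inclusion has closed graph (if $\pa u_k\to\pa u$ in $\lbf^p$ and $u_k\to v$ in $W^{1,p}$, then $\pa v=\pa u$, so $v-u\in N$), and both sides are Banach — the source being complete because $\lbf^p(M)$ is and $d$ is a closed operator on distributions — so by the closed graph theorem it is bounded, which is exactly $\|u\|_{W^{1,p}(M)}\le C'\|\pa u\|_{L^p(M)}$ modulo $N$. Combined with the previous paragraph, the range of $\pa$ is closed.

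The main obstacle is the verification that the relevant ``source'' space — distributions $u$ on $M$ with $\pa u\in\lbf^p(M)$, quotiented by constants and normed by $\|\pa u\|_{L^p(M)}$ — is genuinely a Banach space, since this is what makes the closed graph theorem applicable and thereby turns the qualitative statement of Theorem \ref{thm_derlp_implique_lp} into the needed inequality. Completeness here follows because if $\pa u_k$ is Cauchy in $\lbf^p(M)$ it converges to some $\omega\in\lbf^p(M)$, and then one must produce a distribution $u$ with $\pa u=\omega$; this is where one uses that $d$ (exterior differentiation of currents) is a closed operator together with Theorem \ref{thm_derlp_implique_lp} itself to guarantee $u\in L^p(M)$, so that the limit actually lies in the space. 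Once completeness is in hand, everything else is the standard Banach-space bookkeeping sketched above.
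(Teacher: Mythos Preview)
Your approach contains a genuine circularity. You correctly flag the completeness of the source space $X:=\{u\ \text{distribution on}\ M:\pa u\in\lbf^p(M)\}/\text{constants}$, normed by $\|\pa u\|_{L^p}$, as the main obstacle---but this completeness is in fact equivalent to the corollary itself. Indeed, by Theorem \ref{thm_derlp_implique_lp}, $X$ coincides as a set with $W^{1,p}(M)/N$, and $\pa$ identifies it isometrically with $\pa(W^{1,p}(M))\subset\lbf^p(M)$ equipped with the induced $L^p$ norm. Thus $X$ is complete precisely when $\pa(W^{1,p}(M))$ is closed in $\lbf^p(M)$. The closed graph theorem therefore cannot be invoked until the result is already in hand, and once it is, the remaining machinery is superfluous.

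Your proposed justification for completeness---``$d$ is a closed operator''---does not fill the gap. Closedness of the graph of $d$ says: if $u_k\to u$ \emph{and} $\pa u_k\to\omega$, then $\pa u=\omega$. Here you only know $\pa u_k\to\omega$ in $L^p$, with no convergence of the $u_k$ themselves in any topology. What is actually required is that an $L^p$ limit of gradients is again the gradient of some distribution, i.e.\ that the range of $\pa$ on distributions is distributionally closed. The paper supplies this via the de Rham characterization: $\omega\in\lbf^p(M)$ is a distributional gradient if and only if $\langle\omega,\varphi\rangle=0$ for every $\varphi\in\cbb_0^\infty(M)$ with $\nabla\varphi=0$, a condition visibly stable under $L^p$ limits. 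Once this gives $\omega=\pa u$ for some distribution $u$, Theorem \ref{thm_derlp_implique_lp} yields $u\in L^p(M)$, and the range is closed---directly. The Poincar\'e--Wirtinger inequality (\ref{eq_poinc_wirt}) then follows \emph{a posteriori} from the open mapping theorem, rather than being the route to the corollary.
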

\begin{proof}
 It follows from de Rham's theory \cite{derham} that an element $v\in \lbf^p(M)$ is the gradient of a distribution $u:M\to \R$ in the distribution sense if and only if for every $\varphi \in \cbb_0^{\infty}(M)$ satisfying $\nabla \varphi =0$ we have
 \begin{equation}\label{eq_test_grad}
   <v,\varphi>=0.
 \end{equation}
 Let $v_i$ be a convergent sequence in the image of $\pa$, i.e., $v_i =\pa u_i$, with $u_i \in W^{1,p}(M)$ and $v_i$ tending to some $v\in \lbf^p(M)$ in the $\lbf^p$ norm. Equality (\ref{eq_test_grad}), which is valid for $v_i$, remains valid for $v$. Hence, $v=\pa u$, for some distribution $u$ on $M$. By Theorem \ref{thm_derlp_implique_lp}, as $|\pa u|$ is $L^p$,  $u$ must be $L^p$ as well.
\end{proof}

  Corollary \ref{cor_pa_nonrel} amounts to say that there is a constant $C$ such that for all $u \in W^{1,p}(M)$ we have
   \begin{equation}\label{eq_poinc_wirt}
    \inf_{a\in \R} ||u-a||_{L^p(M)} \le C||\pa u||_{L^p(M)}.
   \end{equation}
  In the case where $M$ is a  bounded subanalytic open subset of $\R^n$, this fact follows from the Poincar\'e-Wirtinger inequality proved in \cite{poincwirt}.

\end{section}

\section{Trace operators and density theorems in $W^{1,p}(M,A)$}

\subsection*{The set $\pa M$.} 
We will say that $M$ is {\bf Lipschitz regular} at $x\in \delta M$ if this point has a neighborhood $U$ in $\R^n$ such that each connected component of $U\cap M$ is the interior of a Lipschitz manifold with boundary $U\cap \delta M$. 
 We then let:
$$\pau M:=\{x\in \delta M: M\mbox{ is Lipschitz regular at $x$}\}.$$
This set is subanalytic and we have
\begin{equation}\label{eq_pa1}\dim \left(\delta M \setminus  \pau M\right)\le m-2.\end{equation}
 This fact follows from a famous result  which is sometimes referred as {\it Wing Lemma} by geometers (see \cite[Proposition 1, section 19]{l}, \cite[Proposition 9.6.13]{bcr}, or \cite[Lemma 5.6.7]{livre}).   
  We denote by $M_\sigma$ the singular part of the boundary, i.e.: $$M_\sigma:=\delta M\setminus \pa M.$$

  \begin{subsection}{Normal manifolds.}\label{sect_normal}We recall the notions of normal manifold and $\cc^\infty$ normalization,
  introduced in \cite{trace}.
  \begin{dfn}\label{dfn_embedded}
We say that $M$ is {\bf connected at $x\in \delta M$}\index{connected at $x$} if the set $ \{z\in M:|z-x|<\ep\}$ is connected for all  $\ep>0$ small enough.

We will say that it is {\bf connected along $Z\subset \delta M$} if it is connected at each point of $Z$. We say that $M$ is {\bf normal} if it is connected along $ \delta M$.

{\bf A $\cc^\infty$ normalization of $M$ }
 is  a subanalytic $\cc^\infty$ diffeomorphism $h: \mc\to M$ satisfying $\sup_{x\in \mc} |D_x h|<\infty$ and  $\sup_{x\in M} |D_x h^{-1}|<\infty$ (where $D_xh$ stands for the derivative of $h$),  with $\mc$ normal bounded subanalytic $\cc^\infty$ submanifold of $\R^k$, for some $k$.
 \end{dfn}

 The following proposition gathers Propositions $3.3$ and $3.4$ of \cite{trace}, yielding existence and uniqueness of $\cc^\infty$ normalizations.

 \begin{pro}\label{pro_normal_existence}\begin{enumerate}
                                         \item 
        Every bounded subanalytic $\cc^\infty$ manifold admits a $\cc^\infty$ normalization.
\item  \label{item_unique}   Every $\cc^\infty$ normalization  $h: \mc\to M$  extends continuously to a mapping from $\adh{\mc}$ to $\mba$.
    Moreover, if $h_1:\mc_1 \to M$ and $h_2:\mc_2 \to M$ are two $\cc^\infty$ normalizations of $M$, then $h_2^{-1} h_1$  extends to a homeomorphism between $\adh{\mc_1}$ and $\adh{\mc_2}$.                             \end{enumerate}

    \end{pro}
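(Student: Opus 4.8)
The two parts are independent. For part (1) the plan is to build $\mc$ so as to ``pull apart'' the local branches of $M$ at its frontier. By the local conic structure of subanalytic sets, each $x\in\delta M$ has, for $\ep>0$ small, a neighbourhood $\{z\in M:|z-x|<\ep\}$ with finitely many connected components, each of which is moreover connected at $x$; and distinct such branches $C_i\ne C_j$ at $x$ are at inner distance bounded below near $x$: fixing $\ep_0$ so that $B(x,\ep_0)\cap M$ has exactly these branches as components, any path in $M$ joining a point of $C_i\cap B(x,\ep)$ to a point of $C_j\cap B(x,\ep)$, with $\ep<\ep_0/2$, must leave $B(x,\ep_0)$, hence has length $>\ep_0$. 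This last property is precisely what makes it possible to separate the branches by a Lipschitz map, and it is what one exploits. The plan is then to produce a bounded subanalytic $\cc^\infty$ map $\phi\colon M\to\R^N$ with $\sup_M|D\phi|<\infty$ under which, at every frontier point of $M$, distinct local branches acquire distinct limits; then $\mc:=\operatorname{graph}(\phi)\subset\R^n\times\R^N$, with $h$ the restriction of the first projection, is a $\cc^\infty$ normalization, since $h$ and $h^{-1}\colon M\to\mc,\ x\mapsto(x,\phi(x))$, both have bounded derivatives, $\mc$ is a bounded subanalytic $\cc^\infty$ manifold, and by construction each frontier point of $\mc$ sees a single branch, so $\mc$ is normal. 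Producing $\phi$ is where I expect the difficulty: one would refine $\mba$ by a subanalytic cell (or $L$-regular / pancake) decomposition adapted to $M$, $\delta M$ and $M_\sigma$, separate branches on the closed pieces by bounded subanalytic functions, glue by a subanalytic partition of unity, and regularize to a $\cc^\infty$ map, the bound \eqref{eq_pa1} (the Wing Lemma) being what keeps $|D\phi|$ bounded across $M_\sigma$.

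For the continuous extension in part (2), let $h\colon\mc\to M$ be a $\cc^\infty$ normalization. From $\sup|D_xh|<\infty$, $\sup|D_xh^{-1}|<\infty$ and the chain rule, $h$ is bi-Lipschitz for the \emph{inner} (length) metrics $d_\mc$ and $d_M$. I would then use that $\mc$ is normal: combined with the standard \L ojasiewicz-type inner-metric inequality for bounded subanalytic sets (two points lying in one connected component of a small Euclidean ball of such a set are at inner distance $\lesssim|x-x'|^{1/N}$), normality forces $\{z\in\mc:|z-x|<\ep\}$ to have inner diameter tending to $0$ as $\ep\to0$, for every $x\in\adh\mc$; hence every Euclidean-Cauchy sequence in $\mc$ is $d_\mc$-Cauchy. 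Its image under $h$ is then $d_M$-Cauchy, hence Euclidean-Cauchy (as $|x-x'|\le d_M(x,x')$), hence convergent in the compact set $\mba$. Therefore $h$ extends to a continuous map $\bar h\colon\adh\mc\to\mba$. Moreover, if $\bar h(y)\in M$ for some $y\in\delta\mc$, then writing $y=\lim y_n$ with $y_n\in\mc$ and applying the continuous map $h^{-1}\colon M\to\mc$ gives $y_n=h^{-1}(h(y_n))\to h^{-1}(\bar h(y))\in\mc$, contradicting $y\notin\mc$; so $\bar h(\delta\mc)\subset\delta M$.

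For the uniqueness statement, given two $\cc^\infty$ normalizations $h_1\colon\mc_1\to M$ and $h_2\colon\mc_2\to M$, the diffeomorphism $g:=h_2^{-1}h_1\colon\mc_1\to\mc_2$ satisfies $\sup|Dg|\le(\sup|Dh_2^{-1}|)(\sup|Dh_1|)<\infty$ and, symmetrically, $\sup|Dg^{-1}|<\infty$, so $g$ is bi-Lipschitz for the inner metrics of $\mc_1$ and $\mc_2$. As $\mc_1$ and $\mc_2$ are both normal, the argument above shows that on each of them the inner and Euclidean Cauchy structures coincide; hence $g$ and $g^{-1}$ carry Euclidean-Cauchy sequences to Euclidean-Cauchy sequences, and therefore extend to mutually inverse continuous maps $\adh{\mc_1}\to\adh{\mc_2}$ and $\adh{\mc_2}\to\adh{\mc_1}$, i.e.\ $g$ extends to a homeomorphism.

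The genuinely hard step is the construction, in part (1), of a branch-separating $\phi$ that is simultaneously subanalytic, $\cc^\infty$, bounded and of bounded derivative; once that is in hand, part (2) is formal, given only the standard tameness of the inner metric of a bounded subanalytic set at its frontier.
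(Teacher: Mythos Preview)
The paper does not actually prove this proposition: immediately before the statement it writes ``The following proposition gathers Propositions $3.3$ and $3.4$ of \cite{trace}'', and no argument is given. There is therefore no in-paper proof to compare your attempt against.

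On the merits of your sketch: your argument for part (2) is sound and is essentially the standard one. The key inputs are exactly those you name---bi-Lipschitzness of $h$ for the inner metrics (from the bounds on $Dh$ and $Dh^{-1}$), the Kurdyka--\L ojasiewicz type control of the inner metric by a power of the Euclidean one on bounded subanalytic sets, and normality of $\mc$ to ensure that small Euclidean balls are connected so that this control applies. The deduction that $\bar h(\delta\mc)\subset\delta M$ and the symmetric treatment of $g=h_2^{-1}h_1$ are correct.

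For part (1), your graph-of-$\phi$ strategy is the right shape, and you correctly identify the construction of a bounded, bounded-derivative, subanalytic $\cc^\infty$ branch-separating map $\phi$ as the genuine content. One point to tighten: your appeal to the Wing Lemma (inequality~\eqref{eq_pa1}) as ``what keeps $|D\phi|$ bounded across $M_\sigma$'' is not really the right hook. That inequality controls $\dim M_\sigma$, which matters for the \emph{structure} of the boundary but does not by itself yield a derivative bound for $\phi$; the boundedness of $|D\phi|$ has to come from the decomposition you use (L-regular cells / pancakes) and the way the local separating functions are built and glued. So the sketch for (1) remains a plan rather than a proof, as you yourself flag.
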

    
\end{subsection}

 \begin{subsection}{The trace on $\pau M$.}\label{sect_traceloc} In \cite{lprime}, it was shown that the trace of a function $u\in W^{1,p}(M)$ is well-defined as an $L^p_{loc}$ mapping on $\pa M$.  We start by recalling the definition in the case where $M$ is connected along $\pa M$, in which case $M\cup \pa M$ is a Lipschitz manifold with boundary, which makes it possible to rely on the classical theory, as follows.

For  $p\in [1,\infty)$,
we say that a sequence of measurable functions $u_i\in W^{1,p}(M)$ converges to $u\in W^{1,p}(M)$ {\bf in the strong $W^{1,p}_{loc}$ topology} if every $x\in M\cup \pa M$ has a neighborhood $U$ in $\R^n$ such that $u_{i|U\cap M}$ and  $\pa u_{i|U\cap M}$ respectively tend to $u_{|U\cap M}$ and $\pa u_{|U\cap M}$ in the $L^p$ norm (we say ``strong'' because it is local in $M\cup \pa M$, not only in $M$).

\begin{pro}\label{pro_trace_loc} \cite{lprime} If $M$ is connected along $\pa M$ then, for any  $p\in [1,\infty)$, 
 the mapping
 \begin{equation}\label{eq_cinfty_traceloc}\cc^\infty(\mba)\to  L^p_{loc}(\pau M), \quad \varphi\mapsto \varphi_{|\pau M}\,,\end{equation} 
 extends to a mapping
 $$\tra_{\pau M} :W^{1,p} (M)\to L^p_{loc}(\pau M)$$ 
 which is continuous in the strong $W^{1,p}_{loc}$ topology. 
\end{pro}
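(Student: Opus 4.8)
The plan is to establish the trace map on the normal manifold $M\cup\pau M$ by purely local considerations, reducing to the classical theory of Lipschitz domains, and then to upgrade this to a global statement via a partition of unity on $M\cup\pau M$. First I would fix $x\in M\cup\pau M$. If $x\in M$ there is nothing to prove, so assume $x\in\pau M$. By hypothesis $M$ is connected along $\pau M$, so by definition of Lipschitz regularity $x$ has a neighbourhood $U$ in $\R^n$ such that $U\cap M$ is the interior of a Lipschitz manifold with boundary $U\cap\delta M$, and since $M$ is connected at $x$ the component of $U\cap M$ containing points near $x$ is the only one that matters. Thus, shrinking $U$, the pair $(U\cap M,\ U\cap\pau M)$ is Lipschitz-diffeomorphic to a standard Lipschitz domain with its boundary portion, and on such a domain the classical trace theorem gives a bounded linear operator $\cc^\infty(\overline{U\cap M})\to L^p(U\cap\pau M)$ extending restriction, continuous for the $W^{1,p}$ norm on $U\cap M$. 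Call it $\tra_U$.

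Next I would check compatibility: if $U_1$ and $U_2$ are two such neighbourhoods and $\varphi\in\cc^\infty(\mba)$, then $\tra_{U_1}\varphi$ and $\tra_{U_2}\varphi$ agree $\hn^{m-1}$-a.e.\ on $U_1\cap U_2\cap\pau M$, because they both equal the genuine restriction $\varphi_{|\pau M}$ there; passing to $W^{1,p}$-limits and using the local continuity, the same holds for any $u\in W^{1,p}(M)$. Hence the locally defined traces glue to a single element $\tra_{\pau M}u\in L^p_{loc}(\pau M)$, well-defined independently of the chosen covering. Continuity in the strong $W^{1,p}_{loc}$ topology is then immediate from the local estimates: given $u_i\to u$ strongly, cover a compact $K\subset\pau M$ by finitely many neighbourhoods $U_j$ of the above type on each of which $u_{i|U_j\cap M}\to u_{|U_j\cap M}$ in $W^{1,p}$, apply $\tra_{U_j}$, and sum.

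The one point that requires genuine subanalyticity rather than bare Lipschitz regularity is ensuring that the local model near $x\in\pau M$ really is a \emph{single} Lipschitz manifold with boundary $U\cap\delta M$ — i.e.\ that after shrinking $U$ the relevant component of $U\cap M$ has the whole of $U\cap\delta M$ as its boundary, with no extra boundary pieces coming from $M_\sigma$ entering the picture. This is exactly what the hypothesis ``$M$ connected along $\pau M$'' together with the definition of $\pau M$ provides, and it is where the subanalytic structure (via the Wing Lemma and (\ref{eq_pa1})) guarantees that $\pau M$ is an open dense subanalytic subset of $\delta M$ on which such charts exist. With this in hand, the only remaining subtlety is purely measure-theoretic: the gluing is along $\hn^{m-1}$-null overlaps of coordinate patches, so one must note that two $L^p$ functions agreeing a.e.\ on each patch agree a.e.\ on the union, which is standard. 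I expect the main obstacle — though a mild one — to be bookkeeping the compatibility of the Lipschitz charts and verifying that the classical trace operator's continuity constant can be taken locally uniform, so that the partition-of-unity argument for strong $W^{1,p}_{loc}$ continuity goes through cleanly.
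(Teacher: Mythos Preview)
The paper does not actually prove this proposition: it is quoted from \cite{lprime}, and the only argument given in the present paper is the sentence immediately preceding the statement, namely that when $M$ is connected along $\pa M$ the set $M\cup\pa M$ is a Lipschitz manifold with boundary, ``which makes it possible to rely on the classical theory.'' Your proposal is precisely a fleshing-out of that sentence---local Lipschitz charts at each point of $\pa M$, classical trace on each chart, gluing by partition of unity---and is correct. There is no alternative route to compare against; you have simply supplied the details the paper omits by citation.
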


    In the case where $M$ is not connected along $\pa M$, one then can define a trace operator which is ``multi-valued'' (since $M$ may have several connected components locally at a generic frontier point), as follows.

 \subsection*{General definition of the trace.}
There are subanalytic subsets $S_1,\dots, S_k$ of $\pau M$ satisfying  $\hn^{m-1}(\pau M \setminus\bigcup_{i=1}^k S_{i})=0$ and such that each $S_i$ has a neighborhood $U_i$ in $\R^n$ for which each connected component of $U_i\cap M$ is the interior of a Lipschitz manifold with boundary $S_i$ (such a covering is induced for instance by the images of the simplices of a triangulation of $M$ compatible with $\pa M$).  We denote by $Z_{i,1}, \dots, Z_{i,l_i}$  these connected components.

We first define the trace on $S_i$, denoted $\tra_{S_i}$, $i\in \{1,\dots,k\}$. Applying Proposition \ref{pro_trace_loc} to $Z_{i,j}$  (which is normal if $U_i$ is sufficiently small) provides a trace operator, which induces a linear mapping
  $\tra^j_{i}:W^{1,p}(Z_{i,j})\to L^p_{loc}(S_i,\hn^{m-1})$ (since $S_i\subset \pa Z_{i,j}$).
  Define then $$\tra_{S_i} : W^{1,p}(M) \to L^p_{loc} (S_i,\hn^{m-1})^l,\qquad  l=\underset{i=1,\dots,k}\max l_i\, ,$$  by setting for $u\in W^{1,p}(M)$ and $x\in S_i$:
\begin{equation}\label{eq_tral}\tra_{S_i} u(x) := ( \tra_{i}^1 u_{|Z_{i,1}}(x), \dots , \tra_{i}^{l_i} u_{|Z_{i,l_i}}(x), 0,\dots, 0)\in \R^l.\end{equation}
    Here, we add $(l-l_i)$ times the zero function because  it will be convenient that the trace has the same number of components for all $i$.

   \begin{rem}\label{rem_tra}$ $
   \begin{enumerate}[(a)]
    \item \label{item_Lploc_tra}
   As $M\cup \pa M$ is a finite union of manifolds with boundary at every point of $\pa M$, $\tra_{S_i} u(x)$ is actually $L^p$ on a neighborhood of every point of $\adh{S_i}\cap\pa M $. In particular, the image of the trace operator is included in $L^p_{loc}(\pa M)^l$.

\item    The trace mapping of course depends on the way the elements $Z_{i,1},\dots, Z_{i,l_i}$ are enumerated. However,
up to permutation of the $l_i$ first components,  $ \tra_{S_i} u(x)$ is unique, and it only depends on the germ of $u$ near $S_i$. In particular, the kernel of this mapping is independent of any choice. 

       \item \label{item_tra_ipp} It directly follows from our local definition of the trace and Stokes' formula that  a function $u\in W^{1,p}(M)$ belongs to $W^{1,p} (M,A) $, $A$ open subset of $\pa M$, if and only if (\ref{eq_vanishing}) holds for all $\beta\in \wbf^{1,p'}_{\nabla,M\cup A}(\adh{M})$. When $M$ is connected along $\pa M$, it suffices to test  (\ref{eq_vanishing}) on the vector fields
        $\beta\in \cbb^{\infty}_{M\cup A}(\adh{M})$.
   \end{enumerate}
        \end{rem}

In \cite{lprime} is showed the following generalization of Poincar\'e inequality:

\begin{pro}\label{pro_gen_poinc1}
 Let $U$ be a nonempty open subset of $\pad M$. For every $p\in [1,\infty)$, there is a constant $C$ such that we have for each $u\in W^{1,p}(M)$:
 \begin{equation}\label{ineq_poin}
 ||u||_{L^p(M)}\le C ||\pa u||_{L^p(M)}+C||\tra_{\pa M}\, u||_{L^p(U)}.
 \end{equation} 
\end{pro}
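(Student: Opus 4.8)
The plan is to deduce this localized Poincaré inequality from the global Poincaré–Wirtinger inequality (\ref{eq_poinc_wirt}) (equivalently, Corollary \ref{cor_pa_nonrel}) together with a standard compactness–contradiction argument, exploiting the continuity of the trace operator on $\pad M$ from Proposition \ref{pro_trace_loc} (and its general version on the $S_i$). First, I would reduce to controlling the constant part: by (\ref{eq_poinc_wirt}) there is $C_0$ with $\inf_{a\in\R}\|u-a\|_{L^p(M)}\le C_0\|\pa u\|_{L^p(M)}$, so writing $u=(u-a)+a$ for the optimal $a=a(u)$, it suffices to bound $|a|\,\module{M}^{1/p}=\|a\|_{L^p(M)}$ by the right-hand side. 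Since $\tra_{\pa M}$ is linear and $\tra_{\pa M}(a)$ is the constant $a$ on each branch (by construction (\ref{eq_cinfty_traceloc})), we have $\|\tra_{\pa M}(u-a)\|_{L^p(U)}\ge \|a\|_{L^p(U,\hn^{m-1})}-\|\tra_{\pa M}u\|_{L^p(U)}$, so everything comes down to the inequality
\begin{equation*}
\|a\|_{L^p(U,\hn^{m-1})}\le C\|\pa u\|_{L^p(M)}+C\|\tra_{\pa M}u\|_{L^p(U)},
\end{equation*}
i.e. to showing $\|\tra_{\pa M}(u-a)\|_{L^p(U)}\le C\|\pa(u-a)\|_{L^p(M)}$ for $u-a$ ranging over the image of $\pa:W^{1,p}(M)\to\lbf^p(M)$, which has a well-defined continuous right inverse up to constants by Corollary \ref{cor_pa_nonrel}.

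The key step is therefore: for $v\in W^{1,p}(M)$ with $\inf_{a}\|v-a\|_{L^p(M)}=\|v\|_{L^p(M)}$ (a normalized representative), one has $\|\tra_{\pa M}v\|_{L^p(U)}\le C(\|\pa v\|_{L^p(M)}+\|v\|_{L^p(M)})$, and then one upgrades via a compactness argument to remove the $\|v\|_{L^p(M)}$ term. Concretely, suppose the claimed inequality (\ref{ineq_poin}) fails; then there is a sequence $u_i\in W^{1,p}(M)$ with $\|u_i\|_{L^p(M)}=1$ but $\|\pa u_i\|_{L^p(M)}+\|\tra_{\pa M}u_i\|_{L^p(U)}\to 0$. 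By (\ref{eq_poinc_wirt}), $u_i$ is close in $L^p$ to a constant $a_i$, and $|a_i|$ stays bounded away from $0$, so after passing to a subsequence $a_i\to a\ne 0$, hence $u_i\to a$ in $L^p(M)$; moreover $\pa u_i\to 0=\pa a$ in $\lbf^p$, so $u_i\to a$ in $W^{1,p}(M)$. Now the continuity of the trace in the strong $W^{1,p}_{loc}$ topology (Proposition \ref{pro_trace_loc} applied on the branches $Z_{i,j}$ covering a neighborhood in $\pad M$ of a chosen point of $U$, as in the general definition of $\tra$) forces $\tra_{\pa M}u_i\to \tra_{\pa M}a=a\ne 0$ in $L^p_{loc}(U)$, contradicting $\|\tra_{\pa M}u_i\|_{L^p(U)}\to 0$. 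This contradiction yields (\ref{ineq_poin}).

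There is one technical point to handle with care: the trace here is the possibly multivalued operator taking values in $L^p_{loc}(S_i,\hn^{m-1})^l$, defined via the finitely many branches $Z_{i,j}$, and its "continuity in the strong $W^{1,p}_{loc}$ topology" must be invoked branch by branch. Since $U\subset\pad M$ is nonempty and open, it meets some $S_{i_0}$ with $\hn^{m-1}(U\cap S_{i_0})>0$, and on each $Z_{i_0,j}$ the classical (Lipschitz-domain) trace theory — exactly Proposition \ref{pro_trace_loc} — gives the required $L^p_{loc}$ continuity, and the trace of the constant $a$ is $a$ on every branch. The main obstacle is thus purely bookkeeping: making sure the multivalued trace and the local covering $\{Z_{i,j}\}$ interact correctly with the global convergence $u_i\to a$ in $W^{1,p}(M)$, and checking that "$u_i\to a$ in $W^{1,p}(M)$" indeed implies "$u_{i|U\cap M}\to a$ and $\pa u_{i|U\cap M}\to 0$ in $L^p$ on a neighborhood of a point of $U$", which is immediate since $W^{1,p}$ global convergence restricts to $W^{1,p}$ convergence on any open subset. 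Everything else is the standard Poincaré-type compactness argument, and the case where $M$ is a bounded subanalytic open subset of $\R^n$ recovers the known statement of \cite{poincwirt}.
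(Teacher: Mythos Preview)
Your contradiction argument is correct: normalizing $\|u_i\|_{L^p(M)}=1$, using the Poincar\'e--Wirtinger inequality (\ref{eq_poinc_wirt}) to force $u_i\to a$ in $W^{1,p}(M)$ for some nonzero constant $a$, and then invoking the $L^p_{loc}$-continuity of the trace on a branch $Z_{i_0,j}$ over a piece $S_{i_0}$ meeting $U$ in positive $\hn^{m-1}$-measure, gives the desired contradiction. The bookkeeping you flag (multivalued trace, branchwise continuity, global $W^{1,p}$ convergence implying local convergence) is handled exactly as you say. The preliminary reduction you sketch in the first paragraph is not needed once you pass to the contradiction argument; the latter is self-contained.

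Note, however, that in this paper the proposition is \emph{quoted} from \cite{lprime} and not proved here; there is no proof in the present text to compare yours against. The closest thing the paper does prove is the second proposition bearing the same label (in Section~5, for $p$ large), and there the mechanism is different: the author uses the \emph{compactness} of Morrey's embedding (Theorem~\ref{cor_morrey}) to extract an $L^\infty$-convergent subsequence, whereas you bypass any compact embedding entirely and get $W^{1,p}$-convergence of the full sequence directly from (\ref{eq_poinc_wirt}). Your route is therefore more elementary and, crucially, works for every $p\in[1,\infty)$, where no compact Sobolev embedding is available on general bounded subanalytic $M$; the Morrey-based argument of Section~5 only applies for $p$ sufficiently large. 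Conversely, the Section~5 argument handles traces on subsets $A\subset\mba$ of arbitrary codimension, which your $\pa M$-based argument does not address.
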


 In \cite{lprime}, the space $W^{1,p}(M,A)$, $A$ open subset of $\pa M$, was defined as the set of those functions $u\in W^{1,p}(M)$ for which $\tra_{\pa M} u$ is zero on $A$, which is not the definition that we gave in the present article. These two definitions actually come to the same (when $A$ is an open subset of $\pa M$), as shown by the following proposition.

\begin{pro}\label{pro_WMA} If $A$ is an open subset of $\pa M$ then we have for all $p\in [1,\infty)$:
$$W^{1,p}(M,A)=\{u\in W^{1,p}(M):\tra_{\pa M} u= 0\, \mbox{ on $A$}\}. $$
\end{pro}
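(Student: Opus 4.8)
The plan is to prove the two inclusions separately, the easier one being that every $u\in W^{1,p}(M,A)$ has vanishing trace on $A$, and the harder one the converse. For the first inclusion, suppose $u\in W^{1,p}(M,A)$, so that (\ref{eq_vanishing}) holds for all $\beta\in \wbf^{1,p'}_{\nabla,M\cup A}(M)$, and by Remark \ref{rem_tra}(\ref{item_tra_ipp}) it suffices to test against $\beta\in \wbf^{1,p'}_{\nabla,M\cup A}(\adh M)$. Fix a relatively compact open subanalytic $V\subset A$ with $\adh V\subset A$. Working locally near a piece $S_i$ of $\pau M$ meeting $V$, and using a partition of unity subordinate to the covering $\{U_i\}$, one reduces to the situation where $M$ is connected along $\pau M$ near $S_i$, so $M\cup \pau M$ is a Lipschitz manifold with boundary and the classical trace theory applies. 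Choosing $\beta$ of the form $\varphi\,\nu$, where $\nu$ is a smooth extension of the outward unit normal and $\varphi\in\cc^\infty$ supported near $S_i\cap V$, the classical Stokes/trace formula turns (\ref{eq_vanishing}) into $\int \tra_{\pau M}u\cdot \varphi\,d\hn^{m-1}=0$ for all such $\varphi$; since the $\varphi$'s are dense enough in $L^{p'}$ on the relevant piece of boundary, this forces $\tra_{\pau M}u=0$ a.e. on $S_i\cap V$. Letting $V$ exhaust $A$ and summing over the finitely many pieces $S_i$ (recalling $\hn^{m-1}(\pau M\setminus\bigcup S_i)=0$) gives $\tra_{\pa M}u=0$ on $A$.

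For the converse, assume $u\in W^{1,p}(M)$ with $\tra_{\pa M}u=0$ on $A$; we must show (\ref{eq_vanishing}) holds for every $\beta\in\wbf^{1,p'}_{\nabla,M\cup A}(M)$, equivalently (again by Remark \ref{rem_tra}(\ref{item_tra_ipp})) for every $\beta\in\wbf^{1,p'}_{\nabla,M\cup A}(\adh M)$. Such a $\beta$ has $\adh{\supp\beta}\subset M\cup A$, hence $\adh{\supp\beta}\cap\delta M\subset A\subset\pau M$; in particular $\supp\beta$ stays away from the singular part $M_\sigma$ of the boundary. Cover the compact set $\adh{\supp\beta}\cap\pa M$ by finitely many of the neighborhoods $U_i$ on which each connected component $Z_{i,j}$ of $U_i\cap M$ is the interior of a Lipschitz manifold with boundary $S_i$, add one open set contained in $M$ to cover the interior part, and take a subordinate $\cc^\infty$ partition of unity $(\chi_\alpha)$; it suffices to prove $\langle\pa u,\chi_\alpha\beta\rangle=-\langle u,\nabla(\chi_\alpha\beta)\rangle$ for each $\alpha$. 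For the pieces supported in $M$ this is just Stokes' formula for compactly supported vector fields. For a piece supported in some $U_i$, restrict to each $Z_{i,j}$: since $Z_{i,j}\cup S_i$ is a Lipschitz manifold with boundary, the classical integration-by-parts formula on Lipschitz domains gives
$$\int_{Z_{i,j}}\pa u\cdot(\chi_\alpha\beta)+\int_{Z_{i,j}}u\,\nabla(\chi_\alpha\beta)=\int_{S_i}\tra^j_i u\;(\chi_\alpha\beta\cdot\nu_{i,j})\,d\hn^{m-1},$$
where $\nu_{i,j}$ is the outward unit normal of $Z_{i,j}$ along $S_i$. Because $\tra^j_i u$ is, by hypothesis and the definition (\ref{eq_tral}) of $\tra_{S_i}$, zero $\hn^{m-1}$-a.e.\ on $S_i\cap A$, and $\chi_\alpha\beta$ is supported inside $A$, the right-hand side vanishes. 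Summing over $j$ and over $i$ yields (\ref{eq_vanishing}) for $\chi_\alpha\beta$, and then over $\alpha$ for $\beta$.

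The one technical point requiring a little care, and the step I expect to be the main obstacle, is the justification of the Lipschitz-domain integration-by-parts formula above with the mere $W^{1,p}$ (resp. $\wbf^{1,p'}_\nabla$) regularity at hand: one needs the classical trace theorem on Lipschitz manifolds with boundary — valid here since $Z_{i,j}\cup S_i$ is such a manifold and $\tra^j_i$ was constructed precisely by invoking it in Proposition \ref{pro_trace_loc} — together with a density argument approximating $u$ on $Z_{i,j}$ in the strong $W^{1,p}_{loc}$ topology by smooth functions (or by functions smooth up to $S_i$), for which the formula is elementary, and passing to the limit using continuity of $\tra^j_i$. Since everything is localized near $S_i\subset\pau M$ and away from $M_\sigma$, no restriction on $p$ beyond $p\in[1,\infty)$ is needed, and the matching of the two sides is exactly the content of the classical Lipschitz theory, so the argument is routine once the localization is set up. Finally, to handle the possibility that $\adh{\supp\beta}$ is not compact in $M\cup A$ — it is, since $\supp\beta\subset M$ is bounded and $\adh{\supp\beta}\subset M\cup A$ — one notes $\adh{\supp\beta}$ is a closed bounded, hence compact, subset of $\R^n$ contained in the open-in-$\adh M$ set $M\cup A$, which legitimizes the finite covering.
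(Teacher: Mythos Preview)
Your proposal is correct and follows essentially the same approach as the paper: both directions proceed by localizing via a partition of unity to a neighborhood of a point of $M\cup A$, where $M\cup A$ is a finite union of Lipschitz manifolds with boundary, and then invoking Stokes' formula together with the local definition of the trace. Your write-up is considerably more detailed than the paper's (which dispatches the whole proposition in three sentences, citing Remark~\ref{rem_tra}(\ref{item_tra_ipp}) for the first inclusion and saying only ``partition of unity $+$ Stokes'' for the second), but the underlying argument is the same; your closing discussion of the density justification for the Lipschitz integration-by-parts formula is sound but largely unnecessary once you have invoked Remark~\ref{rem_tra}(\ref{item_tra_ipp}) to reduce to $\beta$ smooth up to the boundary.
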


\begin{proof} Every $u\in W^{1,p}(M)$ that satisfies (\ref{eq_vanishing})  for all $\beta \in \wbf^{1,p'}_{\nabla,M\cup A}(M)$  must have zero trace on $A$, by Remark \ref{rem_tra} (\ref{item_tra_ipp}). Conversely,  take a function $u$ that has zero trace on an open subset $A$ of $\pa M$. Since we can take a partition of unity, it suffices to check equality (\ref{eq_vanishing}) for a vector field $\beta$ which is supported in the vicinity of a given point $\xo$ of $M\cup A$.  For such $\xo$ and $\beta$,
as $M\cup A$ is at $\xo$ a finite union of manifolds with boundary $A$, if $u$ is such that $\tra_{\pa M} u= 0$ on $A$ then (\ref{eq_vanishing}) follows from Stokes' formula.
\end{proof}

For small values of $p$, when $M$ is connected at boundary points, it is possible to approximate functions vanishing on an open subanalytic subset $Z$ of $\delta M$ by functions that are supported in the complement of $\adh Z$ and that in addition are smooth up to the boundary:

 \begin{thm}\label{thm_dense_lprime}\cite{lprime} Let $Z$ be a subanalytic open subset of $\pau M$ and let $E$ be a subanalytic subset of $ \delta M$ containing $\delta M\setminus \pau M$ and $\delta Z$, with   $\dim E\le m-2$. If $M$ is connected along $\pau M\setminus (Z \cup \adh E )$  then for all $p\in [1,m- \dim E]$ not infinite,    $\cc^\infty_{\mba\setminus (Z\cup \adh{E})}(\mba)$ is dense in $W^{1,p}(M,Z)$.
\end{thm}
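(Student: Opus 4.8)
The plan is to pass, in three steps, from an arbitrary $u\in W^{1,p}(M,Z)$ to a function of $\cc^\infty_{\mba\setminus(Z\cup\adh E)}(\mba)$, keeping $W^{1,p}(M)$-convergence throughout. Observe first that $Z\cup\adh E=\adh Z\cup\adh E$ is closed in $\mba$ (since $\delta Z\subset E$), so $\Omega:=\mba\setminus(Z\cup\adh E)$ is open in $\mba$, and that $\cc^\infty_\Omega(\mba)\subset W^{1,p}(M,Z)$. \emph{Step $1$: reduce to a bounded $u$ that vanishes near $\adh E$.} Truncating, $T_N u:=\max(-N,\min(N,u))$ still lies in $W^{1,p}(M,Z)$ — the trace commutes with the Lipschitz map $T_N$, which fixes $0$, so $\tra_{\pa M}(T_N u)=0$ on $Z$ and Proposition~\ref{pro_WMA} applies — and $T_N u\to u$ in $W^{1,p}(M)$; so, by a diagonal argument, we may assume $u$ bounded. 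Since $\adh E$ is a compact subanalytic set with $\dim\adh E=\dim E\le m-2$, one has the tube estimate $\hn^m(M\cap\{x:d(x,\adh E)<t\})\le C\,t^{\,m-\dim E}$ (cover $\adh E$ by $\lesssim t^{-\dim E}$ balls of radius $t$ and use that $\hn^m(M\cap B(x,r))\le C\,r^m$ on a subanalytic manifold). For $p>1$ I would take a smoothed cut-off $\psi_\ep$ of $d(\cdot,\adh E)$, with values in $[0,1]$, equal to $0$ near $\adh E$ and to $1$ on $\{d(\cdot,\adh E)\ge\ep\}$, obtained from a logarithmic interpolation between the levels $\ep^2$ and $\ep$, so that $|\pa\psi_\ep|\le C\,|\log\ep|^{-1}\,d(\cdot,\adh E)^{-1}$ on $\{\ep^2<d(\cdot,\adh E)<\ep\}$ and $\pa\psi_\ep=0$ off it; summing dyadic contributions on that annulus, the tube estimate and the hypothesis $p\le m-\dim E$ yield $||\pa\psi_\ep||_{L^p(M)}^p\le C\,(\log(1/\ep))^{1-p}\to 0$ (for $p=1$, a plain linear cut-off between the levels $\ep$ and $2\ep$ works, thanks to $\dim E\le m-2$). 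As $u$ is bounded, $\psi_\ep u\to u$ in $W^{1,p}(M)$, and $\psi_\ep u\in W^{1,p}(M,Z)$ by Proposition~\ref{pro_WMA}; so we may also assume $u\equiv0$ on $M\cap\{d(\cdot,\adh E)<c\}$ for some $c>0$.

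\emph{Step $2$: push $\supp u$ off $Z$.} Then $K:=\adh Z\setminus\{d(\cdot,\adh E)<c\}$ is a compact subset of $Z$ disjoint from $\adh E$ (recall $\delta Z\subset\adh E$). As $Z$ is open in $\pa M$ and $M_\sigma\subset\adh E$, each point of $K$ has a neighbourhood $V$ in $\R^n$, disjoint from $\adh E$, such that $V\cap\delta M=V\cap Z$ and $V\cap M$ is a finite disjoint union of interiors of Lipschitz manifolds with boundary $V\cap Z$. I would cover $K$ by finitely many such $V_1,\dots,V_r$, take a partition of unity $\rho_0,\rho_\infty,\rho_1,\dots,\rho_r$ subordinate to $\{\{d(\cdot,\adh E)<c\},\ \R^n\setminus\adh Z,\ V_1,\dots,V_r\}$, note that $\rho_0 u=0$ and that $\rho_\infty u$ is supported off $\adh Z$, and treat each $\rho_i u$ ($i\ge1$) in the Lipschitz model: on every component of $V_i\cap M$ the function $\rho_i u$ has zero trace on the \emph{entire} local boundary $V_i\cap Z$, so the classical Hardy inequality on a Lipschitz manifold with boundary gives $\chi(d(\cdot,Z)/t)\,\rho_i u\to\rho_i u$ in $W^{1,p}$ as $t\to 0^+$, where $\chi\equiv0$ on $(-\infty,1]$ and $\chi\equiv1$ on $[2,\infty)$; here the only delicate term, $t^{-1}\chi'(d(\cdot,Z)/t)\,\rho_i u$, is dominated in $L^p$ by the tail $\int_{\{d(\cdot,Z)<2t\}}d(\cdot,Z)^{-p}|\rho_i u|^p$, which tends to $0$ by Hardy (and for $p=1$ by the analogous elementary one-dimensional estimate). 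Summing, $u$ is approximated in $W^{1,p}(M)$ by some $\widetilde u$ vanishing near every point of $Z$ and near $\adh E$, i.e. with $\adh{\supp\widetilde u}$ a compact subset of $\Omega$.

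\emph{Step $3$: smooth $\widetilde u$.} Fix $\phi\in\cc^\infty(\mba)$ with $\phi\equiv1$ near $\adh{\supp\widetilde u}$ and $\supp\phi$ compact in $\Omega$. By the density of $\cc^\infty(\mba)$ in $W^{1,p}(M)$ (\cite{trace, lprime}) — applied, if necessary, after composing with a $\cc^\infty$ normalization, the connectedness of $M$ along $\pa M\setminus(Z\cup\adh E)$ and the vanishing of $\widetilde u$ near the remaining part of $\delta M$ making the relevant local structure a Lipschitz manifold with boundary — pick $v_j\in\cc^\infty(\mba)$ with $v_j\to\widetilde u$ in $W^{1,p}(M)$; then $\phi v_j\in\cc^\infty_{\mba\setminus(Z\cup\adh E)}(\mba)$ and $\phi v_j\to\phi\widetilde u=\widetilde u$. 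A diagonal argument across the three steps concludes.

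The hard part will be Step $1$: producing cut-offs near the singular set $\adh E$ with $L^p$-negligible gradients \emph{at the critical exponent} $p=m-\dim E$. This is exactly where the sharp subanalytic volume asymptotics of tubes around $\adh E$ and the restriction $p\le m-\dim E$ are used, and it is what forces the upper bound on $p$ in the statement; the two remaining steps reduce, respectively, to classical Lipschitz-boundary theory (Hardy's inequality and a partition of unity) and to the density theorem of \cite{trace, lprime}.
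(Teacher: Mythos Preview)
The paper does not prove this theorem: it is quoted verbatim from \cite{lprime} and used as a black box (see the sentence preceding the statement and the immediate passage to Corollary~\ref{cor_dense_lprimeb}). So there is no in-paper proof to compare against; I can only assess your sketch on its own merits.

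Your three-step strategy---truncate and excise a neighbourhood of $\adh E$, then push the support off $Z$ via Hardy on the local Lipschitz model, then mollify on the remaining Lipschitz manifold with boundary---is the standard route and is essentially correct. The tube estimate in Step~1 is valid for bounded subanalytic sets (Minkowski and Hausdorff dimensions coincide, and $\hn^m(M\cap B(x,r))\le C r^m$ holds uniformly), and your logarithmic cut-off does kill the critical exponent $p=m-\dim E$. Step~2 is fine for $p>1$; for $p=1$ Hardy's inequality in the form $\int |u|/d\le C\int|\nabla u|$ actually fails, so your ``analogous elementary one-dimensional estimate'' has to be replaced by the standard translation-plus-mollification argument for density of $\cc_c^\infty$ in $W^{1,1}_0$ of a Lipschitz domain---the conclusion survives, but not via the bound you wrote.

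One genuine point of caution in Step~3: citing ``density of $\cc^\infty(\mba)$ in $W^{1,p}(M)$ from \cite{trace,lprime}'' is circular, since for small $p$ that density is precisely (the $Z=\emptyset$ case of) the theorem you are proving, and the result of \cite{trace} (Theorem~\ref{thm_trace}(i) here) only covers $p$ large. Your parenthetical remark shows you see the fix: what you actually need is purely local and classical---$\adh{\supp\,\widetilde u}$ is a compact subset of $\Omega$, where $\mba$ is a Lipschitz manifold with boundary (connectedness along $\pa M\setminus(Z\cup\adh E)$ plus $M_\sigma\subset E$), so the approximation comes from ordinary mollification on Lipschitz domains, followed by multiplication by $\phi$. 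State it that way and drop the citation.
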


 The assumption ``$M$ connected along  $\pau M\setminus (Z \cup \adh E )$'' is essential to ensure the density of functions that are smooth up to the boundary. If one is only interested in approximating by functions that are supported in the complement of a given set, as we shall need, one can always normalize the underlying manifold and then
apply the above theorem, as done in the proof of the following corollary.
\begin{cor}\label{cor_dense_lprimeb}Let a subanalytic subset $E$ of $\delta M$  satisfy $\dim E\le m-2$.
 If $A$ is an open subanalytic subset of $\pa M$ then for every $p\in [1,p_0]$ not infinite, where
 $$p_0:=m-\dim M_\sigma \cup E\cup \delta A,$$
 the space $W^{1,\infty}_{\mba \setminus  A\cup E}(M)$ is dense in $W^{1,p}(M,A)$. In particular,  for all such $p$, we have:
 $$W^{1,p}(M,A)=W^{1,p}(M,A\cup E). $$
  \end{cor}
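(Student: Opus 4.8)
\textbf{Proof plan for Corollary \ref{cor_dense_lprimeb}.}

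The strategy is to reduce to Theorem \ref{thm_dense_lprime} by passing to a $\cc^\infty$ normalization of $M$, which exists by Proposition \ref{pro_normal_existence}. Let $h:\mc\to M$ be such a normalization; since $h$ and $h^{-1}$ are bi-Lipschitz with bounded derivatives, composition with $h$ induces isomorphisms $L^p(M)\cong L^p(\mc)$, $\lbf^p(M)\cong \lbf^p(\mc)$ and $W^{1,p}(M)\cong W^{1,p}(\mc)$ that respect the weak vanishing condition (\ref{eq_vanishing}) (by the chain rule and the change of variables formula for currents), hence also $W^{1,p}(M,A)\cong W^{1,p}(\mc,\tilde A)$ where $\tilde A:=h^{-1}(A)$ and similarly $W^{1,\infty}_{\mba\setminus A\cup E}(M)$ corresponds to $W^{1,\infty}_{\adh{\mc}\setminus \tilde A\cup \tilde E}(\mc)$ with $\tilde E:=h^{-1}(E)$ (here one uses Proposition \ref{pro_normal_existence}(\ref{item_unique}) to see that the continuous extension of $h$ to $\adh{\mc}$ carries the relevant boundary pieces to one another, so dimensions and the combinatorics of $\delta$ are preserved). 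Since $\mc$ is normal, the hypothesis ``$M$ connected along $\pau M\setminus(Z\cup\adh E)$'' of Theorem \ref{thm_dense_lprime} is automatic on $\mc$. So it suffices to prove the density statement for normal $M$.

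Now assume $M$ is normal, so $M_\sigma=\delta M\setminus\pa M$ still has $\dim M_\sigma\le m-2$ by (\ref{eq_pa1}). Take $Z:=A$ (an open subanalytic subset of $\pa M$) and $E':=M_\sigma\cup E\cup\delta A$, which is subanalytic, contains $\delta M\setminus\pa M$ and $\delta A=\delta Z$, and satisfies $\dim E'\le m-2$ (using $\dim E\le m-2$, $\dim\delta A\le\dim A-1\le m-2$, and $\dim M_\sigma\le m-2$). Then $p_0=m-\dim E'$, so for every $p\in[1,p_0]$ not infinite Theorem \ref{thm_dense_lprime} gives that $\cc^\infty_{\mba\setminus(A\cup\adh{E'})}(\mba)$ is dense in $W^{1,p}(M,A)$. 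Since $\cc^\infty_{\mba\setminus(A\cup\adh{E'})}(\mba)\subset W^{1,\infty}_{\mba\setminus A\cup E}(M)$ (because $E\subset E'$ and smooth functions on $\mba$ are bounded with bounded gradient), and $W^{1,\infty}_{\mba\setminus A\cup E}(M)\subset W^{1,p}(M,A)$ (by the inclusion $W^{1,p}_{\mba\setminus A}(M)\subset W^{1,p}(M,A)$ noted after the definition, applied with $A\cup E$ in place of $A$, together with $W^{1,p}(M,A\cup E)\subset W^{1,p}(M,A)$), the density of $W^{1,\infty}_{\mba\setminus A\cup E}(M)$ in $W^{1,p}(M,A)$ follows by squeezing.

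For the final assertion, the inclusion $W^{1,p}(M,A\cup E)\subset W^{1,p}(M,A)$ is immediate from the definitions, since enlarging $A$ to $A\cup E$ shrinks the class $\wbf^{1,p'}_{\nabla,M\cup(A\cup E)}(M)\subset\wbf^{1,p'}_{\nabla,M\cup A}(M)$ of test vector fields. For the reverse inclusion, let $u\in W^{1,p}(M,A)$; by the density just proved there is a sequence $u_i\in W^{1,\infty}_{\mba\setminus A\cup E}(M)$ converging to $u$ in $W^{1,p}(M)$. Each $u_i$ lies in $W^{1,p}(M,A\cup E)$ because it is supported away from $A\cup E$ and hence satisfies (\ref{eq_vanishing}) against every $\beta\in\wbf^{1,p'}_{\nabla,M\cup(A\cup E)}(M)$ by Stokes; passing to the limit (the pairings $<\pa u_i,\beta>$ and $<u_i,\nabla\beta>$ are continuous in $u_i$ for fixed $\beta$) shows $u\in W^{1,p}(M,A\cup E)$.

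The main obstacle is the normalization step: one must check carefully that pulling back by $h$ really does identify $W^{1,p}(M,A)$ with $W^{1,p}(\mc,\tilde A)$ and the ``nicely supported'' subspace with its counterpart — i.e. that the weak boundary-vanishing condition (\ref{eq_vanishing}), which is phrased in terms of the divergence operator $\nabla$ and test vector fields, is preserved under the bi-Lipschitz change of variables, and that $h$ extended to $\adh{\mc}$ maps $\mc_\sigma$, $h^{-1}(A)$ and $h^{-1}(E)$ to sets of the expected dimensions so that the numerology defining $p_0$ is the same upstairs and downstairs. Everything else is bookkeeping with the definitions and a routine limiting argument.
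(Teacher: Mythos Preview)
Your argument is essentially the paper's own proof, with the normalization step and the application of Theorem \ref{thm_dense_lprime} to $Z=A$ and $E'=M_\sigma\cup E\cup\delta A$ carried out identically; the extra detail you supply (the squeeze and the limiting argument for the equality $W^{1,p}(M,A)=W^{1,p}(M,A\cup E)$) is fine and indeed implicit in the paper's terse version.

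One small slip: in justifying the trivial inclusion $W^{1,p}(M,A\cup E)\subset W^{1,p}(M,A)$ you write that enlarging $A$ to $A\cup E$ \emph{shrinks} the class of test vector fields and display $\wbf^{1,p'}_{\nabla,M\cup(A\cup E)}(M)\subset\wbf^{1,p'}_{\nabla,M\cup A}(M)$. This inclusion is backwards: since $M\cup A\subset M\cup(A\cup E)$, the support condition is laxer for the larger set, so in fact $\wbf^{1,p'}_{\nabla,M\cup A}(M)\subset\wbf^{1,p'}_{\nabla,M\cup(A\cup E)}(M)$. The test class therefore \emph{grows}, the condition (\ref{eq_vanishing}) becomes \emph{more} restrictive, and this is precisely why $W^{1,p}(M,A\cup E)\subset W^{1,p}(M,A)$. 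Your conclusion is correct; only the explanatory sentence needs its arrows flipped.
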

\begin{proof} Observe that if $h:\mc \to M$ is a $\cc^\infty$ normalization of $M$, it suffices to prove the theorem for $\mc$,   $\adh  h^{-1}(E)$, and $\adh  h^{-1}(A)$, where $\adh h:\adh\mc \to \adh M$ is the extension of $h$ (see Proposition \ref{pro_normal_existence}). 
It is thus no loss of generality to assume that $M$ is normal.

Set for simplicity $$E':= M_\sigma \cup E\cup \delta A.$$ By Theorem \ref{thm_dense_lprime}, the space $\cc^\infty_{\mba\setminus (A\cup \adh{E'})}(\mba)$ is dense in $W^{1,p}(M,A)$ for all $p\in[1,m-\dim E']$ not infinite. 
\end{proof}

The above corollary enables us perform integration by parts for suitable $p$. More precisely, given a  subanalytic subset $A$ of $\delta M$ let
\begin{equation}\label{eq_pm}
 p_M(A):= m-\max(\dim  \adh A\setminus int_{\pa M} (A), \dim M_\sigma)\ge 2,
\end{equation}
where $int_{\pa M} (A)$ stands for the interior of $A\cap \pa M$ in $\pa M$. We have:

\begin{cor}\label{cor_AB}
 Let $A\subset \delta M$ be  subanalytic  and let  $p\in [1, p_M(A)]$ (not infinite). Equality (\ref{eq_vanishing}) holds for every $u\in W^{1,p}(M,A)$ and $\beta\in \wbf_\nabla ^{1,p'}(M,\pa M \setminus \adh A)$.
Consequently,
 \begin{equation*}\label{eq_spec}
 W^{1,p}(M, A)= W^{1,p}(M,\adh A).\end{equation*}
\end{cor}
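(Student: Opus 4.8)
The plan is to deduce both assertions from a single, carefully chosen, application of Corollary \ref{cor_dense_lprimeb}. Write $A_0:=int_{\pa M}(A)$, an open subanalytic subset of $\pa M$. Since $A'\subset A''\subset\delta M$ implies $\wbf^{1,p'}_{\nabla,M\cup A'}(M)\subset\wbf^{1,p'}_{\nabla,M\cup A''}(M)$, and hence $W^{1,p}(M,A'')\subset W^{1,p}(M,A')$, applying this to $A_0\subset A\subset\adh A$ gives
$$W^{1,p}(M,\adh A)\subset W^{1,p}(M,A)\subset W^{1,p}(M,A_0).$$
Thus it will be enough to prove the identity (\ref{eq_vanishing}) for $u\in W^{1,p}(M,A)$ and $\beta\in\wbf_\nabla^{1,p'}(M,\pa M\setminus\adh A)$, and then to observe that this forces $W^{1,p}(M,A)\subset W^{1,p}(M,\adh A)$. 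The latter implication is immediate once the identity is known: the class $\wbf^{1,p'}_{\nabla,M\cup\adh A}(M)$ defining $W^{1,p}(M,\adh A)$ is contained in $\wbf_\nabla^{1,p'}(M,\pa M\setminus\adh A)$, because $\wbf^{1,p'}_{\nabla,\mba\setminus(\pa M\setminus\adh A)}(M)\subset\wbf_\nabla^{1,p'}(M,\pa M\setminus\adh A)$ (the inclusion recorded just after the definition) and $M\cup\adh A\subset\mba\setminus(\pa M\setminus\adh A)$, the latter holding since $\adh A\subset\delta M$.

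Next I would set
$$E:=M_\sigma\cup(\adh A\setminus A_0).$$
This is a subanalytic subset of $\delta M$ with $\dim E\le m-2$: by (\ref{eq_pa1}) one has $\dim M_\sigma\le m-2$, and a routine dimension count for subanalytic sets, based again on (\ref{eq_pa1}) and on the fact that frontiers of subanalytic sets drop dimension, gives $\dim(\adh A\setminus A_0)\le m-2$ as well. Moreover $\delta A_0\subset\adh A\setminus A_0\subset E$, so that $M_\sigma\cup E\cup\delta A_0=E$, and consequently the threshold of Corollary \ref{cor_dense_lprimeb} applied to the pair $(A_0,E)$ equals $m-\dim E=m-\max(\dim M_\sigma,\dim(\adh A\setminus A_0))=p_M(A)$ (see (\ref{eq_pm})). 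Finally $A_0\cup E=\adh A\cup M_\sigma$, whence $\mba\setminus(A_0\cup E)=M\cup(\pa M\setminus\adh A)$. Corollary \ref{cor_dense_lprimeb} then asserts that, for every $p\in[1,p_M(A)]$ not infinite, $W^{1,\infty}_{M\cup(\pa M\setminus\adh A)}(M)$ is dense in $W^{1,p}(M,A_0)$ (and, incidentally, that $W^{1,p}(M,A_0)=W^{1,p}(M,\adh A\cup M_\sigma)$).

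Then, fixing such a $p$ and taking $u\in W^{1,p}(M,A)\subset W^{1,p}(M,A_0)$ and $\beta\in\wbf_\nabla^{1,p'}(M,\pa M\setminus\adh A)$, I would pick $u_i\in W^{1,\infty}_{M\cup(\pa M\setminus\adh A)}(M)$ with $u_i\to u$ in $W^{1,p}(M)$. As $M$ is bounded one has $W^{1,\infty}(M)\subset W^{1,p}(M)$, so $u_i\in W^{1,p}_{M\cup(\pa M\setminus\adh A)}(M)$, and the defining property of $\wbf_\nabla^{1,p'}(M,\pa M\setminus\adh A)$ (recall $(p')'=p$) gives $<\pa u_i,\beta>=-<u_i,\nabla\beta>$ for every $i$. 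Letting $i\to\infty$, using $u_i\to u$ and $\pa u_i\to\pa u$ in $L^p(M)$ together with $\beta\in\lbf^{p'}(M)$, $\nabla\beta\in L^{p'}(M)$ and H\"older's inequality, one obtains (\ref{eq_vanishing}) for $u$ and $\beta$. Specializing $\beta$ to an element of $\wbf^{1,p'}_{\nabla,M\cup\adh A}(M)$ (permissible by the first paragraph) shows $u\in W^{1,p}(M,\adh A)$, hence $W^{1,p}(M,A)=W^{1,p}(M,\adh A)$.

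The only genuinely geometric point is the estimate $\dim(\adh A\setminus int_{\pa M}(A))\le m-2$ and, through it, the matching of the exponent interval $[1,p_M(A)]$ with the range produced by Corollary \ref{cor_dense_lprimeb}; I expect the bookkeeping that places $M_\sigma$, $\adh A\setminus A_0$ and $\delta A_0$ inside one subanalytic set $E$ of dimension $m-p_M(A)$, together with the identification $\mba\setminus(A_0\cup E)=M\cup(\pa M\setminus\adh A)$, to be the step demanding the most attention. Everything after that is the standard density-plus-H\"older passage to the limit.
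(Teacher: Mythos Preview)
Your proof is correct and follows essentially the same route as the paper: apply Corollary~\ref{cor_dense_lprimeb} with the open set $int_{\pa M}(A)$ and an exceptional set containing $\adh A\setminus int_{\pa M}(A)$, then pass to the limit via the resulting density. The paper's one-line proof takes $E=\adh A\setminus int_{\pa M}(A)$, whereas you throw $M_\sigma$ into $E$ as well; this changes nothing in the threshold (since $M_\sigma$ already appears in the $p_0$ of Corollary~\ref{cor_dense_lprimeb}) but has the mild advantage that your approximants land directly in $W^{1,\infty}_{M\cup(\pa M\setminus\adh A)}(M)$, which is exactly the test class in the definition of $\wbf_\nabla^{1,p'}(M,\pa M\setminus\adh A)$.
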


\begin{proof}
Apply Corollary \ref{cor_dense_lprimeb} to $E:=\adh A\setminus int_{\pa M} (A)$ and $int_{\pa M} (A)$, where  $int_{\pa M} (A)$ is as just above.\end{proof}

\begin{rem}\label{rem_punctered}
This result is not true for $p$ large. A counterexample is provided by a holomorphic function that has a pole of order one at the origin. Such a function is an element of $\wbf_\nabla ^{1,p'}(M)$, if $p$ is sufficiently large and $M$ is a punctured disk $\{(x,y)\in \R^2:0<x^2+y^2<\ep\}$, $\ep>0$ small.  Corollary \ref{cor_pbig_vanishing} gives a variation of the above corollary in the case ``$p$ large''.

\end{rem}

 \end{subsection}

\section{Density theorems for $\wbf_\nabla^{1,p}(M,B)$.}

 The idea is to derive density theorems for  $ \wbf_\nabla ^{1,p}(M,B)$, $B$ subanalytic subset of $\delta M$,  from the density theorems of the preceding section by means of a duality argument. This requires a technical lemma for which the following remark will be of service.

\begin{rem}\label{rem_repres_functionals}
  For every $p \in (1,\infty)$, the linear mapping  $\mathbf{A}_p: W^{1,p}(M)\to L^{p}(M)^{n+1}$ defined as $\mathbf A _p (u):= (u, \pa u)$,  is continuous, injective, and has closed image. Its dual mapping  $\mathbf{A}_p':  L^{p'}(M)^{n+1}\to W^{1,p}(M)'$ is therefore onto. As a matter of fact, every  continuous linear functional $T:W^{1,p}(M)\to \R$ can be written $\alpha+\pa^* \beta$ for some  $\alpha\in L^{p'}(M)$ and $\beta\in  L^{p'}(M)^n$. We of course can choose $\beta$ in $\lbf^{p'}(M)$.
\end{rem}

\begin{lem}\label{lem_densite}
Let $A,B$, and $E$ be subanalytic subsets of $\delta M$ partitioning this set. For each $p\in (1,\infty)$, the following conditions are equivalent:
 \begin{enumerate}[(i)]
  \item  (\ref{eq_vanishing}) holds for all $u \in W^{1,p}(M,A)$ and all $\beta\in \wbf_\nabla ^{1,p'}(M,B)$.
   \item $W^{1,p}_{\mba \setminus (A\cup E)} (M)$ is dense in $W^{1,p}(M,A)$.
   \item $\wbf^{1,p'}_{\nabla,\mba \setminus (B\cup E)} (M)$ is dense in $ \wbf_\nabla ^{1,p'}(M,B)$.
 \end{enumerate}
\end{lem}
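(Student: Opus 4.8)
The plan is to establish the cycle of implications (ii) $\Rightarrow$ (i) $\Rightarrow$ (iii) $\Rightarrow$ (i) $\Rightarrow$ (ii), or more economically to prove (ii) $\Leftrightarrow$ (i) and then note that (iii) $\Leftrightarrow$ (i) follows by the same argument with the roles of $u$'s and $\beta$'s (and of $A$ and $B$, and of $p$ and $p'$) interchanged, using that $\wbf_\nabla^{1,p'}(M,B)$ is by definition the annihilator-type space dual to $W^{1,p}(M)$ just as $W^{1,p}(M,A)$ is dual-compatible with $\wbf_\nabla^{1,p'}(M)$. The implication (ii) $\Rightarrow$ (i) is the easy direction: if $u\in W^{1,p}(M,A)$ is a limit in the $W^{1,p}$ norm of functions $u_i\in W^{1,p}_{\mba\setminus(A\cup E)}(M)$, then for $\beta\in\wbf_\nabla^{1,p'}(M,B)$, identity (\ref{eq_vanishing}) holds for each pair $(u_i,\beta)$ because $u_i$ vanishes near $\adh A\cup\adh E\supset\delta M\setminus B$ (so it lies in $W^{1,p}_{\mba\setminus A}(M)\subset$ the test class against which $\beta$ is required to vanish, using the definition of $\wbf_\nabla^{1,p'}(M,B)$); then pass to the limit in both sides of (\ref{eq_vanishing}), which are continuous in the $W^{1,p}\times\wbf_\nabla^{1,p'}$ topology.

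The substantive direction is (i) $\Rightarrow$ (ii), and this is where Remark \ref{rem_repres_functionals} and a Hahn--Banach argument enter. Suppose (ii) fails. Then, by Hahn--Banach, there is a continuous linear functional $T$ on $W^{1,p}(M,A)$ (equivalently, on $W^{1,p}(M)$, extending) that vanishes on $W^{1,p}_{\mba\setminus(A\cup E)}(M)$ but not identically on $W^{1,p}(M,A)$. By Remark \ref{rem_repres_functionals}, write $T=\alpha+\pa^*\beta$ with $\alpha\in L^{p'}(M)$ and $\beta\in\lbf^{p'}(M)$, i.e. $T(u)=\langle u,\alpha\rangle+\langle\pa u,\beta\rangle$ for all $u\in W^{1,p}(M)$. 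The vanishing of $T$ on $W^{1,p}_{\mba\setminus(A\cup E)}(M)$ — which contains $\cc_0^\infty(M)$ — first forces $\alpha=-\nabla\beta$ in the distributional sense, so in particular $\nabla\beta\in L^{p'}(M)$ and hence $\beta\in\wbf_\nabla^{1,p'}(M)$, with $T(u)=\langle\pa u,\beta\rangle+\langle u,\nabla\beta\rangle$. Next, the vanishing of $T$ on the larger space $W^{1,p}_{\mba\setminus(A\cup E)}(M)$, i.e. on functions vanishing near $\adh A\cup\adh E=\delta M\setminus(\text{int of }B)$, is precisely the condition that (\ref{eq_vanishing}) holds for $\beta$ against all $u\in W^{1,p}_{M\cup B'}(M)$ for the appropriate open part $B'$ of $B$; by Remark \ref{rem_tra}(c) and the definition of $\wbf_\nabla^{1,p'}(M,B)$ this yields $\beta\in\wbf_\nabla^{1,p'}(M,B)$ (some care is needed here with the partition: $E$ is glued to $A$, so ``vanishing near $A\cup E$'' leaves exactly the freedom to test on $B$; one should use that $A,B,E$ partition $\delta M$). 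Then hypothesis (i) applies to the pair $(u,\beta)$ for every $u\in W^{1,p}(M,A)$, giving $T(u)=\langle\pa u,\beta\rangle+\langle u,\nabla\beta\rangle=0$ for all such $u$ — contradicting $T\not\equiv 0$ on $W^{1,p}(M,A)$. Hence (ii) holds.

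By symmetry — replacing $p$ by $p'$, $A$ by $B$, interchanging the role of functions and tangent vector fields, of $\pa$ and $\nabla$, and of $W^{1,\bullet}$ and $\wbf_\nabla^{1,\bullet}$, and using the dual representation of functionals on $\wbf_\nabla^{1,p'}(M)$ analogous to Remark \ref{rem_repres_functionals} — the same argument gives (i) $\Leftrightarrow$ (iii). This closes the chain of equivalences. \textbf{The main obstacle} I anticipate is the bookkeeping around the set $E$ in the partition: one must check that ``$T$ vanishes on $W^{1,p}_{\mba\setminus(A\cup E)}(M)$'' translates \emph{exactly} into ``$\beta\in\wbf_\nabla^{1,p'}(M,B)$'' — no more and no less — which hinges on the fact that $A,B,E$ partition $\delta M$ so that the complement of $\adh A\cup\adh E$ inside $\delta M$ is (up to a set carrying no relevant measure) the interior of $B$, together with the equivalence in Remark \ref{rem_tra}(c) allowing one to test (\ref{eq_vanishing}) against the membership-defining class. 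The functional-analytic skeleton (Hahn--Banach plus the surjectivity of $\mathbf A_p'$) is routine; the geometric identification of the relevant boundary pieces is the delicate point.
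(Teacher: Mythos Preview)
Your proposal is correct and follows essentially the same Hahn--Banach argument as the paper. The ``main obstacle'' you flag dissolves immediately once you note that, because $A,B,E$ \emph{partition} $\delta M$, one has the exact set-theoretic identity $\mba\setminus(A\cup E)=M\cup B$, so $W^{1,p}_{\mba\setminus(A\cup E)}(M)=W^{1,p}_{M\cup B}(M)$ is precisely the test class in the definition of $\wbf_\nabla^{1,p'}(M,B)$ --- no closures, interiors, or appeal to Remark~\ref{rem_tra}(\ref{item_tra_ipp}) are needed (and incidentally your sign should read $\alpha=\nabla\beta$, not $-\nabla\beta$).
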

\begin{proof}
 That $(iii)$ (resp. $(ii)$) implies $(i)$ directly comes down from the definition of the space $\wbf_\nabla ^{1,p'}(M,B)$ (resp.  $W^{1,p}(M,A)$), since $$\mba\setminus (B\cup E)= M \cup A \qquad \mbox{(resp. $\mba\setminus (A\cup E)= M \cup B$).}$$
 
 To show that $(i)$ implies $(ii)$, we are going to prove, assuming $(i)$, that every continuous linear functional $T:W^{1,p}(M,A)\to \R$ that vanishes on $ W^{1,p}_{\mba \setminus (A\cup E)}(M)$ is identically zero. Such a functional $T$ can be written $\alpha +\pa^*\beta$, with $\alpha \in L^{p'}(M)$ and $\beta \in \lbf^{p'}(M)$ (see Remark \ref{rem_repres_functionals}), and we have for all $u\in  W^{1,p}_{\mba \setminus (A\cup E)}(M)$ \begin{equation}\label{eq_alpu}<\alpha,u>=-<\beta,\pa u>,\end{equation}
 which entails that $\alpha=\nabla \beta$ in the sense of distribution, and consequently that $\beta\in \wbf_\nabla^{1,p'}(M)$.  Equality (\ref{eq_alpu}) then shows that  (\ref{eq_vanishing}) holds for every $u\in W^{1,p}_{\mba \setminus (A\cup E)}(M)=W^{1,p}_{M\cup B}(M)$, which exactly means that $\beta \in \wbf^{1,p'}_\nabla (M,B)$.
 
 Equality (\ref{eq_vanishing}), that must hold for all   $u\in W^{1,p}(M,A)$ (due to $(i)$), thus yields that for all such $u$ we have $$T(u)=<\nabla \beta, u>+<\beta, \pa u>=0,$$ as required.
 
 To prove the implication $(iii)\implies (i)$, proceed in the same way, replacing $W_{\mba \setminus (A\cup E)}^{1,p} (M)$ with  $\wbf_{\nabla,\mba \setminus (B\cup E)}^{1,p'} (M)$,  $W^{1,p}(M,A)$ with   $ \wbf_\nabla ^{1,p'}(M,B)$, and $\nabla$ with $\pa$.
\end{proof} 
\begin{rem}\label{rem_dense_deltaM}$ $\begin{enumerate}[(a)]
\item  Most of the results of this section yield density of ``nicely supported'' functions and vector fields, not necessarily smooth. It is however well-known that we can approximate elements of Sobolev spaces by smooth functions (using instance de Rham regularization operators \cite{derham,gold}), and that the support of the approximation may be required to fit in any small prescribed neighborhood of the support of the considered function or vector field. As a matter of fact, the last two assertions of this lemma  provide density of smooth functions and tangent vector fields. 
                                    \item It is worthy of notice that $(iii)$ is trivial in the case $A=\delta M$, in which case, thanks to (a), we see that $(ii)$  yields that $\cc_0^\infty(M)$   is dense in $W^{1,p}(M,\delta M)$ for all $p\in (1,\infty)$. Similarly, in the case $B=\delta M$, we see that $(iii)$ of  the latter lemma establishes that $\cbb_0^{\infty}(M)$ is dense in $\wbf_\nabla ^{1,p'}(M,\delta M)$.
                                                                       \end{enumerate}

\end{rem}

\begin{thm}\label{thm_dense_MBE} Let $B$ and $E$ be subanalytic subsets of $\delta M$, and set $A:=\pa M\setminus \adh {B\cup E}$.  If $\dim E\le m-2$ then
 for all $p\in (1,q]$ (not infinite), where
 $$q:= m-\dim M_\sigma\cup \delta A\cup E,$$
  the space $\wbf^{1,p'}_{\nabla,\mba\setminus (\adh B\cup E)} (M)$ is dense in $\wbf_\nabla ^{1,p'}(M,B)$. In particular, for all such $p$, we have:
 $$\wbf_\nabla ^{1,p'}(M,B)=\wbf_\nabla ^{1,p'}(M,\adh B\cup E).$$
\end{thm}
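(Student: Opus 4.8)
The plan is to deduce the statement, by duality, from the density theorem for $W^{1,p}(M,A)$ given by Corollary \ref{cor_dense_lprimeb}, the bridge being the equivalence $(ii)\Leftrightarrow(iii)$ of Lemma \ref{lem_densite}. To set things up, put $E':=\delta M\setminus(A\cup B)$, so that $(A,B,E')$ partitions $\delta M$ into subanalytic sets, $A=\pa M\setminus\adh{B\cup E}$ being open in $\pa M$ (because $\adh{B\cup E}$ is closed). Since $A=\pa M\setminus\adh{B\cup E}$, each point of $E'$ lies in $M_\sigma$ or in $\adh{B\cup E}\setminus B=(\adh B\setminus B)\cup(\adh E\setminus B)$, so $E'\subseteq M_\sigma\cup\delta B\cup\adh E$; the three sets on the right have dimension $\le m-2$ -- respectively by (\ref{eq_pa1}), because $\dim\delta B<\dim B\le m-1$, and by the hypothesis $\dim E\le m-2$ -- hence $\dim E'\le m-2$.

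Now apply Lemma \ref{lem_densite} to the partition $(A,B,E')$. Since $A\cup E'=\delta M\setminus B$ and $B\cup E'=\delta M\setminus A$, its condition $(ii)$ reads ``$W^{1,p}_{M\cup B}(M)$ is dense in $W^{1,p}(M,A)$'' and its condition $(iii)$ reads ``$\wbf^{1,p'}_{\nabla,M\cup A}(M)$ is dense in $\wbf_\nabla^{1,p'}(M,B)$''. As $A$ is disjoint from $\adh{B\cup E}\supseteq\adh B\cup E$, one checks that $M\cup A\subseteq\mba\setminus(\adh B\cup E)$, so that
$$\wbf^{1,p'}_{\nabla,M\cup A}(M)\ \subseteq\ \wbf^{1,p'}_{\nabla,\mba\setminus(\adh B\cup E)}(M)\ \subseteq\ \wbf_\nabla^{1,p'}(M,B),$$
the last inclusion by Stokes' formula. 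Hence $(iii)$ immediately gives density of the middle space, which is the assertion; and the ``in particular'' follows from $\wbf^{1,p'}_{\nabla,\mba\setminus(\adh B\cup E)}(M)\subseteq\wbf_\nabla^{1,p'}(M,\adh B\cup E)\subseteq\wbf_\nabla^{1,p'}(M,B)$, the outer space being dense in the last one while $\wbf_\nabla^{1,p'}(M,\adh B\cup E)$ is closed.

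So it remains to prove $(ii)$. For this I would invoke Corollary \ref{cor_dense_lprimeb} with $A$ in the role of its ``$A$'' (which is legitimate: $A$ is open subanalytic in $\pa M$) and with $E'$ in the role of its ``$E$'' (legitimate because $\dim E'\le m-2$). Since $\mba\setminus(A\cup E')=M\cup B$, the corollary tells us that $W^{1,\infty}_{M\cup B}(M)$, and a fortiori $W^{1,p}_{M\cup B}(M)$, is dense in $W^{1,p}(M,A)$ for every finite $p\le m-\dim(M_\sigma\cup E'\cup\delta A)$; this is exactly $(ii)$ for those exponents.

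The last point -- and the step I expect to be the real obstacle -- is to check that the range $[1,\,m-\dim(M_\sigma\cup E'\cup\delta A)]$ contains $(1,q]$, i.e. that $\dim(M_\sigma\cup E'\cup\delta A)\le\dim(M_\sigma\cup\delta A\cup E)$. Using $E'\subseteq M_\sigma\cup\delta B\cup\adh E$ and $\dim\adh E=\dim E$, this reduces to the inequality $\dim\delta B\le\dim(M_\sigma\cup\delta A\cup E)$ between frontier dimensions. The natural way to obtain it is to argue inside $\pa M$: one has $\delta B\cap\pa M\subseteq\adh{B\cup E}\cap\pa M=\pa M\setminus A$, and one splits $\pa M\setminus A$ into its relative frontier of $A$ in $\pa M$ (which is contained in $\delta A$) and its relative interior -- which, away from $\adh E$ and from a set of dimension $\le m-2$, is a relatively open piece of $\adh B$, on which $\delta B=\adh B\setminus B$ is strictly lower-dimensional -- invoking once more the subanalytic fact $\dim(\adh X\setminus X)<\dim X$ to close the estimate.
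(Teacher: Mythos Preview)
Your overall strategy---feed Corollary~\ref{cor_dense_lprimeb} into condition~$(ii)$ of Lemma~\ref{lem_densite} and read off~$(iii)$---is exactly the paper's. The difference is your choice of the ``third set'' in the partition, and this is precisely where the argument breaks.

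You take $E':=\delta M\setminus(A\cup B)$ and then need $\dim(M_\sigma\cup E'\cup\delta A)\le\dim(M_\sigma\cup\delta A\cup E)$, which you correctly reduce to $\dim\delta B\le\dim(M_\sigma\cup\delta A\cup E)$. This inequality is \emph{false in general}. Let $M$ be the open unit ball in $\R^3$, $E=\emptyset$, and $B=S^2\setminus C$ with $C$ the equator. Then $\adh{B\cup E}=S^2$, so $A=\emptyset$, $\delta A=\emptyset$, $M_\sigma=\emptyset$; hence $\dim(M_\sigma\cup\delta A\cup E)=-\infty$ while $\dim\delta B=\dim C=1$. Here $q=\infty$, so the theorem claims density for \emph{every} finite $p>1$, whereas your invocation of Corollary~\ref{cor_dense_lprimeb} (with your $E'=C$) only reaches $p\le m-\dim C=2$. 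Your closing sketch cannot repair this: the subanalytic fact $\dim\delta B<\dim B$ yields at best $\dim\delta B\le m-2$, which says nothing when the right-hand side is $-\infty$ (or, more generally, strictly below $m-2$).

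The paper avoids the whole difficulty by choosing the third set the other way round: it sets $E':=M_\sigma\cup\adh E\cup\delta A$, so that $\dim E'=\dim(M_\sigma\cup\delta A\cup E)$ holds automatically, and applies Lemma~\ref{lem_densite} to the partition $(A,B',E')$ with $B':=\delta M\setminus(A\cup E')$ rather than to $(A,B,\cdot)$. Corollary~\ref{cor_dense_lprimeb} then delivers $(ii)$ for that partition on the full range $p\le q$ with no dimension comparison required; and since $B'\cup E'=\delta M\setminus A\supset\adh B\cup E$, one has $\mba\setminus(B'\cup E')=M\cup A\subset\mba\setminus(\adh B\cup E)$, from which the asserted density follows.
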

\begin{proof}   Applying Corollary \ref{cor_dense_lprimeb} to $A$ and $E':= M_\sigma \cup \adh E\cup \delta A $, we see that the space $W^{1,p}_{\mba \setminus  A\cup E'}(M)$ is dense in $W^{1,p}(M,A)$, for $p\le m-\dim E'$.
By Lemma \ref{lem_densite}, we deduce that  $\wbf^{1,p'}_{\nabla,\mba\setminus ( B'\cup E')} (M)$ is dense in $\wbf_\nabla ^{1,p'}(M,B)$, for all such $p$, where $B':=\delta M\setminus (A\cup E')$. As $B'\cup E'\supset \adh B\cup E$, the result follows.
\end{proof}

Note that in the case where $B$ is an open subset of $\pa M$ and $E=\emptyset$, we get that for all 
 $ p\in (1, p_M(B)]$ (not infinite) 
we have:
\begin{equation*}\label{eq_spec_B}\wbf_\nabla ^{1,p'}(M,B)=\wbf_\nabla ^{1,p'}(M,\adh B).\end{equation*}

So far, we only constructed approximations that vanish in the vicinity of a prescribed set, with no attempt to make them $L^\infty$ or smooth up to the boundary (see however Remark \ref{rem_dense_deltaM} (a)). Constructing approximations in $\cc^\infty(\mba)$ indeed requires the manifold to be connected along $\pa M$ (see on this issue \cite[Corollary 3.10]{trace}). We actually have:

\begin{thm}\label{thm_dense_E}  Let $E$ be a subanalytic subset of $\delta M$ satisfying $\dim E\le m-2$. For every $p\in (1, p_M(E)]$ not infinite, we have:
\begin{enumerate}[(i)]
\item If $M$ is connected along $\pa M\setminus E$ then  the space $\cbb^{\infty}_{\mba \setminus E}(\mba)$ is dense in $\wbf_\nabla^{1,p'}(M)$.
 \item The space $\wbf^{1,\infty}_{\mba\setminus E} (M)$
 is dense in  $\wbf_\nabla^{1,p'}(M)$.
\end{enumerate}
\end{thm}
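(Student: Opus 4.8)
The plan is to deduce Theorem \ref{thm_dense_E} from Theorem \ref{thm_dense_MBE} together with the duality machinery of Lemma \ref{lem_densite}, exactly as Theorem \ref{thm_dense_MBE} itself was deduced from Corollary \ref{cor_dense_lprimeb}. Indeed, specializing Theorem \ref{thm_dense_MBE} (or rather its proof) to the case $B=\emptyset$ and the given $E$ with $\dim E\le m-2$: here $A=\pa M\setminus \adh E$, so $\delta A\subset \adh E\cup M_\sigma$ and hence $q=m-\dim M_\sigma\cup\delta A\cup E=m-\dim M_\sigma\cup\delta A$. One checks that, because $E$ is the only ``Dirichlet'' set in play, $p_M(E)$ coincides with (or is dominated by) this $q$, so the range $p\in(1,p_M(E)]$ is admissible. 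For such $p$, Theorem \ref{thm_dense_MBE} gives that $\wbf^{1,p'}_{\nabla,\mba\setminus(\adh E)}(M)=\wbf^{1,p'}_{\nabla,\mba\setminus E}(M)$ (since $E$ is already closed in the relevant sense, or else one replaces $E$ by $\adh E$ throughout) is dense in $\wbf_\nabla^{1,p'}(M)=\wbf_\nabla^{1,p'}(M,\emptyset)$.

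For part (ii) I would then upgrade the ``vanishing near $E$'' approximations to $\wbf^{1,\infty}$ ones by a truncation-and-mollification argument that does not disturb the support condition. Concretely, given $\beta\in\wbf_\nabla^{1,p'}(M)$ and $\beta_i\in\wbf^{1,p'}_{\nabla,\mba\setminus E}(M)$ converging to $\beta$, each $\beta_i$ vanishes on a neighborhood of $E$, hence is supported in a compact subset of $M\cup(\pa M\setminus E)$; on a neighborhood of that compact set $M\cup\pa M$ is a finite union of Lipschitz manifolds with boundary, so the classical interior/boundary truncation and mollification (de Rham regularization \cite{derham,gold}, as invoked in Remark \ref{rem_dense_deltaM}(a)) produces $\widetilde\beta_i\in\wbf^{1,\infty}(M)$ arbitrarily $\wbf^{1,p'}_\nabla$-close to $\beta_i$ and still supported away from $E$ (the regularization support can be forced into any prescribed neighborhood of $\supp\beta_i$). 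A diagonal argument then yields a sequence in $\wbf^{1,\infty}_{\mba\setminus E}(M)$ converging to $\beta$.

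For part (i), under the extra hypothesis that $M$ is connected along $\pa M\setminus E$, I would instead aim directly for approximations in $\cbb^\infty(\mba)$. Here the point is that connectedness along $\pa M\setminus E$ makes $M\cup(\pa M\setminus E)$ a genuine (not merely ``multivalued'') Lipschitz manifold with boundary near each of its boundary points, so the classical extension/approximation theory for Sobolev spaces on Lipschitz domains applies locally and, via a subanalytic partition of unity subordinate to a cover of $\mba\setminus E$, globally; the ``smooth up to the boundary'' approximants inherit the support condition $\overline{\supp}\subset\mba\setminus E$. This is the step where the connectedness hypothesis is genuinely used — as flagged in the text via \cite[Corollary 3.10]{trace}, without it one cannot hope for density of functions smooth up to the boundary.

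The main obstacle I expect is bookkeeping rather than conceptual: verifying that the exponent thresholds match, i.e. that $p_M(E)\le m-\dim M_\sigma\cup\delta A\cup E$ with $A=\pa M\setminus\adh E$, so that the full stated range of $p$ is covered by Theorem \ref{thm_dense_MBE}; and, for (ii), checking that the local truncation near the regular boundary really can be performed without introducing any mass near $E$ and without losing the $\wbf_\nabla^{1,p'}$ convergence — which is where one must use that $\nabla$ is a first-order operator and that the cutoffs live on the manifold-with-boundary part only. Both are routine given the tools already assembled, but they are the places where care is needed.
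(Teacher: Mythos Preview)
Your plan is correct and matches the paper's strategy: specialize Theorem \ref{thm_dense_MBE} to $B=\emptyset$ so as to reduce to approximating elements of $\wbf^{1,p'}_{\nabla,\mba\setminus E}(M)$, then use that away from $E$ one is on a Lipschitz manifold with boundary. Two points where the paper is tighter than your write-up are worth noting.

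First, the paper replaces $E$ by $E\cup M_\sigma$ at the very start. This is not cosmetic: your claim that each $\beta_i$ is ``supported in a compact subset of $M\cup(\pa M\setminus E)$'' is only correct once $M_\sigma\subset E$; otherwise $\adh{\supp\,\beta_i}$ may meet $M_\sigma\setminus E$, where $M$ is \emph{not} locally a finite union of Lipschitz manifolds with boundary, and your mollification step would break down there. Absorbing $M_\sigma$ into $E$ also makes the exponent bookkeeping immediate, since then $p_M(E)=m-\dim E$.

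Second, for (ii) the paper does not argue directly on the local sheets as you propose but instead deduces (ii) from (i) via a $\cc^\infty$ normalization $h:\mc\to M$ (Proposition \ref{pro_normal_existence}): $\mc$ being normal, (i) applies to it, and the bi-Lipschitz map $h$ carries $\cbb^{\infty}_{\adh{\mc}\setminus \adh h^{-1}(E)}(\adh{\mc})$ into $\wbf^{1,\infty}_{\mba\setminus E}(M)$ while identifying the $\wbf_\nabla^{1,p'}$ norms. Your direct mollification on each sheet is essentially the local version of this and also works, but the normalization packaging is cleaner.
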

\begin{proof}  Possibly replacing $E$ with $E\cup M_\sigma$, we may assume that $E\supset  M_\sigma$.
By Theorem \ref{thm_dense_MBE} (applied with $B=\emptyset$), we know that it suffices to show that $\cc^\infty_{\mba \setminus E}(\mba)$ is dense in   $\wbf^{1,p'}_{\nabla,\mba \setminus E} (M)$ in the  $\wbf_\nabla ^{1,p'}(M)$-norm for each real number $p$ less than or equal to
$p_M(E)= m-\dim M_\sigma \cup E.$

If $M$ is connected along  $\delta M\setminus E$ and if $E\supset  M_\sigma$, then $\mba \setminus E$ is a Lipschitz manifold with boundary and the elements of $\wbf^{1,p'}_{\nabla,\mba\setminus E} (M)$ are compactly supported vector fields on it. The needed fact is therefore well-known.
%

Assertion $(ii)$ is a consequence of $(i)$, together with the facts that we can normalize $M$ and that normalizations identify the respective Sobolev spaces.
\end{proof}

\section{A trace theorem and applications}

\subsection{The cotrace operator $\gab_\nub$.}
As usual, we endow the space 
$$W^{1-\frac{1}{p}}(\pa M):=\tra_{\pa M} \left(W^{1,p}(M)\right)$$
with the norm induced by $W^{1,p}(M)$ via $\tra_{\pa M}$, and we denote by $W^{\frac{1}{p}-1}(\pa M)$ its dual space.

 At every $x\in \pa M$, the set $M\cup \pa M$ is a finite union of Lipschitz manifolds with boundary $\pa M$. Recall that the trace operator on $\pa M$ was defined (in section \ref{sect_traceloc}) by taking a finite covering of a dense subset of $\pa M$, denoted $S_1,\dots,S_k$, each $S_i$ having a neighborhood $U_i$ such that $$U_i\cap M=\bigcup_{j=1}^{l_i} Z_{i,j},$$ with $Z_{i,j}\cup S_i$ manifold with boundary for each $i$ and $j$.
 
 We denote by $\nub_j(x)$ the vector which is normal to $S_i$ at $x\in S_i$ (it is defined almost everywhere on $S_i$). We fix an orientation of every $S_i$ and will assume that $\nu_j$ is pointing inward $Z_{i,j}$ if $M$ induces on $S_i$ the orientation of $S_i$ and outward otherwise.

  We then set:
  $$\nub(x):=(\nu_1(x),\dots,\nu_{l_i}(x),\orn,\dots,\orn)\in \R^{nl}, $$
 where $\orn$ appears $(l-l_i)$ times, with $l\ge \max l_i$ (alike in the definition of the trace we add some zeros because it is more convenient that the number of components is independent of $i$).  Of course, $\nub$ depends on the way the $Z_{i,j}$'s are enumerated, and we will assume that we made it coherently with the corresponding choice that we made to define $\tra_{\pa M}$.
 
 Note that the subanalytic set
 \begin{equation}\label{eq_def_E}
  E:= \delta M\setminus \cup_{i=1} ^k S_i
 \end{equation}
has dimension less than $(m-1)$. If $M$ is normal, we choose  $k=1$, $S_1=\pa M$, $E=M_\sigma$.

Every $\beta\in \wbf^{1,\infty}_{\mba\setminus E}(M)$ is Lipschitz with respect to the inner metric, which entails that $\beta_{i,j}:=\beta_{|Z_{i,j}}$ extends continuously to $S_i$ for each $j\le l_i$. We thus can define a mapping (almost everywhere) on $\pa M$ by setting for $x\in S_i$
\begin{equation}\label{eq_gub} \gub \beta(x):= (\nub_{i,1} \cdot \beta_{i,1}(x),\dots,\nub_{i,l_i}\cdot\beta_{i,l_i}(x),0,\dots ,0)\in \R^l . \end{equation}
As customary, we shall regard $\gub \beta$ as a functional on the image of the trace operator (the elements of $W^{1-\frac{1}{p}}(\pa M)$ are $L^p_{loc}$ on $\pa M$, see Remark \ref{rem_tra} (\ref{item_Lploc_tra}).

 For the moment, we are able to define $\gub\beta$ for $\beta \in \wbf_{\mba\setminus E}^{1,\infty}(M)$.
If $\mba$ is a compact manifold with boundary, it is well-known that $\gub$ extends uniquely to a continuous operator on $ \wbf_\nabla^{1,2}(M)\to  W^{-\frac{1}{2}}(\pa M)$  \cite{t}. The theorem below establishes an analogous fact
on any bounded subanalytic manifold.

\begin{thm}\label{thm_trace_formula}
For every $p\in (1, 2]$,  $\gub$ uniquely extends to a continuous linear operator $\gub:\wbf_\nabla^{1,p'}(M)\to W^{\frac{1}{p}-1}(\pa M)$,
such that for all  $\beta\in \wbf_\nabla^{1,p'}(M)$ and
 $u\in W^{1,p}(M)$:
\begin{equation}\label{eq_trace_theorem}
 <\beta, \pa u>+<\nabla \beta,u>=<\gab_\nub \beta,\tra_{\pa M} u>.
\end{equation}
\end{thm}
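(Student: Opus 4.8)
The plan is to establish the formula (\ref{eq_trace_theorem}) first on the dense subspace $\wbf^{1,\infty}_{\mba\setminus E}(M)$ where $\gub\beta$ has already been defined pointwise by (\ref{eq_gub}), then to show that the right-hand side, viewed as a functional of $\tra_{\pa M} u$, is bounded in the appropriate norm, and finally to extend by density using Theorem \ref{thm_dense_E}.

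\textbf{Step 1: the identity on smooth-up-to-the-boundary vector fields.} Let $\beta\in \wbf^{1,\infty}_{\mba\setminus E}(M)$ and $u\in\cc^\infty(\mba)$. On each piece $Z_{i,j}$ of the covering, $Z_{i,j}\cup S_i$ is a Lipschitz manifold with boundary, $\beta_{i,j}$ extends continuously to $S_i$ (being Lipschitz for the inner metric), and the classical Stokes/divergence formula on Lipschitz manifolds with boundary applies and gives
\begin{equation*}
 \int_{Z_{i,j}} \beta_{i,j}\cdot \pa u + \int_{Z_{i,j}} u\,\nabla\beta_{i,j} = \int_{S_i} (\nub_{i,j}\cdot\beta_{i,j})\, u,
\end{equation*}
with the sign convention on $\nub_{i,j}$ chosen exactly so that the boundary term comes out with the orientation of $S_i$. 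Summing over $j\le l_i$ and over $i$, and recalling that $E=\delta M\setminus\bigcup_i S_i$ has dimension $\le m-1$ (hence $\hn^{m-1}$-measure zero off the $S_i$) while $\beta$ vanishes near $E$, one obtains precisely
$<\beta,\pa u>+<\nabla\beta,u>=<\gub\beta,\tra_{\pa M} u>$, where the right-hand pairing is read through the definitions (\ref{eq_tral}) and (\ref{eq_gub}) component by component. Since $\cc^\infty(\mba)$ is dense in $W^{1,p}(M)$ for $p\in(1,\infty)$ and both sides are continuous in $u$ for the $W^{1,p}(M)$ norm once we know the right side is (which is the content of Step 2), the identity holds for all $u\in W^{1,p}(M)$ with such $\beta$.

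\textbf{Step 2: boundedness and definition of the extended operator.} Fix $p\in(1,2]$, so $p'\ge 2$ and in particular $p\le 2\le p_M(E)$ since the singular part $M_\sigma$ and $E$ both have codimension at least $2$; thus Theorem \ref{thm_dense_E}(ii) applies and $\wbf^{1,\infty}_{\mba\setminus E}(M)$ is dense in $\wbf_\nabla^{1,p'}(M)$. For $\beta\in\wbf^{1,\infty}_{\mba\setminus E}(M)$ and any $u\in W^{1,p}(M)$, the left-hand side of the identity from Step 1 is bounded by $(||\beta||_{L^{p'}(M)}+||\nabla\beta||_{L^{p'}(M)})\,||u||_{W^{1,p}(M)}=||\beta||_{\wbf^{1,p'}_\nabla(M)}||u||_{W^{1,p}(M)}$ by H\"older. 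Hence $u\mapsto <\gub\beta,\tra_{\pa M} u>$ is a continuous linear functional on $W^{1,p}(M)$ that factors through $\tra_{\pa M}$: indeed if $\tra_{\pa M} u=0$, i.e. $u\in W^{1,p}(M,\pa M)$, then by Corollary \ref{cor_AB} (with $A=\pa M$, noting $p\le 2\le p_M(\pa M)$) equality (\ref{eq_vanishing}) holds for this $u$ against every element of $\wbf_\nabla^{1,p'}(M)$, so the left-hand side vanishes. Therefore $\gub\beta$ defines an element of the dual of $W^{1-\frac1p}(\pa M)$, i.e. of $W^{\frac1p-1}(\pa M)$, with $||\gub\beta||_{W^{\frac1p-1}(\pa M)}\le ||\beta||_{\wbf^{1,p'}_\nabla(M)}$. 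This operator estimate lets us extend $\gub$ uniquely and continuously from the dense subspace $\wbf^{1,\infty}_{\mba\setminus E}(M)$ to all of $\wbf_\nabla^{1,p'}(M)$, and passing to the limit in the identity of Step 1 — which is legitimate since both $<\beta,\pa u>$, $<\nabla\beta,u>$ depend continuously on $\beta\in\wbf_\nabla^{1,p'}(M)$ and $\gub$ is now continuous into $W^{\frac1p-1}(\pa M)$ — yields (\ref{eq_trace_theorem}) for all $\beta\in\wbf_\nabla^{1,p'}(M)$ and $u\in W^{1,p}(M)$. Uniqueness of the extension is automatic since it is prescribed on a dense set by a continuous formula.

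\textbf{Main obstacle.} The delicate point is Step 1: making the componentwise bookkeeping of the ``multivalued'' trace and cotrace rigorous, in particular checking that the local Stokes formula is valid on each $Z_{i,j}\cup S_i$ (a genuine Lipschitz manifold with boundary, where the divergence theorem is classical) and that gluing across the covering $S_1,\dots,S_k$ introduces no spurious boundary contributions — this is where the condition $\dim E\le m-1$ and the fact that $\beta$ is supported away from $E$ are used. A secondary subtlety is verifying that the functional $\gub\beta$ is genuinely well-defined on the quotient $W^{1-\frac1p}(\pa M)$ rather than merely on $W^{1,p}(M)$, which is exactly where Corollary \ref{cor_AB} (hence the restriction $p\le 2$, so that $p\le p_M(\pa M)$) enters; for larger $p$ the operator $\tra_{\pa M}$ may have a larger kernel and the argument would fail, matching Remark \ref{rem_punctered}.
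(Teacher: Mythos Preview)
Your overall strategy---prove (\ref{eq_trace_theorem}) on the dense subspace $\wbf^{1,\infty}_{\mba\setminus E}(M)$, derive the operator bound on $\gub$, and extend by Theorem \ref{thm_dense_E}---matches the paper's, and Step 2 is essentially correct (the appeal to Corollary \ref{cor_AB} is unnecessary, since for $\beta\in\wbf^{1,\infty}_{\mba\setminus E}(M)$ the pairing $<\gub\beta,\tra_{\pa M}u>$ is \emph{defined} as an integral over $\pa M$ and hence manifestly depends only on $\tra_{\pa M}u$; but it does no harm). There is, however, a genuine gap in Step 1.

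The sets $Z_{i,j}$ are the connected components of $U_i\cap M$ for a neighborhood $U_i$ of $S_i$ in $\R^n$: they neither cover $M$ nor are disjoint for different values of $i$, and each $Z_{i,j}$ carries extra frontier along $\pa U_i\cap M$ in addition to $S_i$. Applying Stokes on $Z_{i,j}$ therefore produces boundary terms on that extra piece, which lie inside $M$ and are \emph{not} killed by the hypothesis that $\beta$ vanishes near $E$; and ``summing over $j\le l_i$ and over $i$'' gives neither $\int_M(\beta\cdot\pa u+u\,\nabla\beta)$ on the left nor cancellation of the spurious contributions on the right. The paper repairs this by taking a partition of unity subordinate to an open cover of the compact set $\adh{\supp\,\beta}\subset\mba\setminus E$: each $\psi\beta$ is then supported near a single point of $\mba\setminus E$, where $\mba$ is locally a finite union of Lipschitz manifolds with boundary $\pa M$, and the classical divergence formula applies there directly for \emph{every} $u\in W^{1,p}(M)$. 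This localization also bypasses your assertion that $\cc^\infty(\mba)$ is dense in $W^{1,p}(M)$, which the paper does not establish without a connectedness hypothesis on $M$ along $\pa M$ (compare the assumptions in Theorem \ref{thm_dense_lprime} and Theorem \ref{thm_dense_E}(i)); when $M$ has several local sheets at a boundary point, elements of $\cc^\infty(\mba)$ cannot approximate functions taking distinct values on the sheets.
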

\begin{proof}
  In the case where $\mba$ is a manifold with boundary,
 it immediately follows from Stokes' formula (and density of smooth forms) that (\ref{eq_trace_theorem}) holds for  any $u\in  W^{1,p}(M)$ and $\beta \in \wbf^{1,p'}_\nabla(M)$.
 In the situation of the theorem, as $\mba \setminus E$ is at every of its points a finite union of manifolds with boundary $\pa M\setminus E$,  this formula therefore must hold for all  $\beta \in \wbf_{\mba\setminus E}^{1,\infty}(M)$ whenever $u\in W^{1,p}(M)$  is supported in the vicinity of a given point of $\pa M\setminus E$.  Since we can use a partition of unity subordinated to an open covering of the closure in $\R^n$ of the support of any given such $\beta$, we see that this equality actually holds for all $u \in W^{1,p}(M)$ and  $\beta \in \wbf_{\mba\setminus E}^{1,\infty}(M)$.

  Observe now that this equality implies that for such $u$ and $\beta$
  \begin{equation*}
 |<\gab_\nub \beta,\tra_{\pa M} u>| \le  ||\beta||_{\wbf_\nabla ^{1,p'}(M)} ||u||_{W^{1,p}(M)},
  \end{equation*}
  which entails that $\gub \beta\in W^{\frac{1}{p}-1}(\pa M)$, as well as
  \begin{equation*}
   ||\gub \beta ||_{W^{\frac{1}{p}-1}(\pa M)} \le  ||\beta||_{\wbf_\nabla ^{1,p'}(M)} .
  \end{equation*}
  Theorem \ref{thm_dense_E},  which establishes the density of $\wbf_{\mba\setminus E}^{1,\infty}(\mba)$ in  $ \wbf_\nabla^{1,p'}(M)$  for all real numbers $p\le p_M(E)$  therefore clearly yields that $\gub$ uniquely extends for $p\in (1,2]$ (since $p_M(E)\ge 2$). Moreover,  (\ref{eq_trace_theorem}) continues to hold.
\end{proof}
\begin{rem}\label{rem_traceformula} We have proved that the result holds true for every $p\in (1, p_M(E)]$ not infinite (see (\ref{eq_def_E}) and (\ref{eq_pm})  for $E$ and $p_M(E)$ respectively). If $M$ is connected along $\pa M$, we therefore just need to assume $p\le m-\dim M_\sigma$ (not infinite).
 Theorem \ref{thm_trace_formula} is however no longer true for $p$ large, a counterexample being given  by the punctured disk (see Remark \ref{rem_punctered}).
\end{rem}

  This theorem enables us to show the following $\wbf^{1,p}_\nabla$ counterpart of Proposition \ref{pro_WMA}.

\begin{cor}\label{cor_WMB}
            Let $B$ be an open subset of $\pa M$. For $p\in (1,2]$, we have:
$$\wbf_\nabla^{1,p'}(M,B)=\{\beta\in \wbf_\nabla^{1,p'}(M): \gub \beta= 0  \mbox{ on $B$}  \}.$$
\end{cor}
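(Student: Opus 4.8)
The plan is to prove the two inclusions separately, relying on the trace formula (\ref{eq_trace_theorem}) from Theorem \ref{thm_trace_formula} and on the characterization of $W^{1,p}(M,A)$ via traces given in Proposition \ref{pro_WMA}. Throughout, $p\in(1,2]$ so that $p'\in[2,\infty)$ and $\gub$ is defined on $\wbf_\nabla^{1,p'}(M)$.

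First I would show the inclusion ``$\subset$''. Let $\beta\in\wbf_\nabla^{1,p'}(M,B)$. By definition this means that (\ref{eq_vanishing}) holds for all $u\in W^{1,p}_{M\cup B}(M)$, i.e. $\langle\pa u,\beta\rangle=-\langle u,\nabla\beta\rangle$ for every $u\in W^{1,p}(M)$ with $\adh{\supp\,u}\subset M\cup B$. For such $u$, the trace formula (\ref{eq_trace_theorem}) gives $\langle\gub\beta,\tra_{\pa M}u\rangle=\langle\beta,\pa u\rangle+\langle\nabla\beta,u\rangle=0$. It remains to argue that $\tra_{\pa M}u$, as $u$ ranges over $W^{1,p}_{M\cup B}(M)$, produces enough test functions on $B$ to conclude $\gub\beta=0$ on $B$: since $B$ is an open subset of $\pa M$, restricting to $u$ supported near an arbitrary point $x_0\in B$ and using that locally $M\cup B$ is a finite union of Lipschitz manifolds with boundary $B$, the classical local theory shows the traces of such $u$ are dense in $W^{1-\frac1p}$ of a neighborhood of $x_0$ in $B$; hence the functional $\gub\beta$ vanishes on all of $B$.

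Next I would show ``$\supset$''. Let $\beta\in\wbf_\nabla^{1,p'}(M)$ with $\gub\beta=0$ on $B$; I must verify (\ref{eq_vanishing}) for every $u\in W^{1,p}_{M\cup B}(M)$. Fix such a $u$. Again by (\ref{eq_trace_theorem}), $\langle\beta,\pa u\rangle+\langle\nabla\beta,u\rangle=\langle\gub\beta,\tra_{\pa M}u\rangle$, and the right-hand side is the pairing of $\gub\beta\in W^{\frac1p-1}(\pa M)$ with $\tra_{\pa M}u$. Since $\adh{\supp\,u}\subset M\cup B$, the trace $\tra_{\pa M}u$ is supported in $B$ (on $\pa M\setminus B$ it vanishes by the local definition of the trace and Stokes), and as $\gub\beta$ is zero on the open set $B$, the pairing vanishes. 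Hence $\langle\beta,\pa u\rangle=-\langle\nabla\beta,u\rangle$, which is exactly (\ref{eq_vanishing}), so $\beta\in\wbf_\nabla^{1,p'}(M,B)$.

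The main obstacle I anticipate is making precise, in the first inclusion, the claim that the traces $\tra_{\pa M}u$ of functions $u\in W^{1,p}_{M\cup B}(M)$ are dense enough in the relevant trace space to force $\gub\beta=0$ on $B$ — one must localize near a generic point of $B$, invoke the structure of $M\cup B$ as a finite union of Lipschitz manifolds with boundary (as recalled before (\ref{eq_def_E})), and apply the classical surjectivity of the trace operator $W^{1,p}\to W^{1-\frac1p}$ for Lipschitz domains, together with a partition of unity. The second inclusion is comparatively routine once one notes that the support condition on $u$ confines $\tra_{\pa M}u$ to $B$. A subtlety to mention is that, as in Remark \ref{rem_tra}, the trace on $\pa M$ is vector-valued and $\gub\beta$ likewise; the argument goes through componentwise on each piece $Z_{i,j}$, and this should be stated but needs no extra work.
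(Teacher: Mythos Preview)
Your proof is correct and follows essentially the same route as the paper: both rely on the trace formula (\ref{eq_trace_theorem}) together with the observation that $u\in W^{1,p}_{M\cup B}(M)$ forces $\tra_{\pa M}u$ to have compact support in $B$. The paper's proof is extremely terse---it records only that $\adh{\supp\,u}\cap\delta M$ is a compact subset of $B$ and then says the result ``directly follows'' from (\ref{eq_trace_theorem})---whereas you spell out both inclusions and, in particular, flag the density issue for ``$\subset$'' (that traces of functions in $W^{1,p}_{M\cup B}(M)$ fill out enough of the local trace space near each point of $B$). That extra care is appropriate; the paper is tacitly using the local Lipschitz-manifold-with-boundary structure and surjectivity of the classical trace to make the same identification, so there is no genuine difference in approach.
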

\begin{proof}
If $u\in W^{1,p}_{M\cup B}(M )$ then $\adh {\supp \, u}\cap \delta M$ is a compact subset of $B$, which entails that $\tra_{\pa M} u$ has compact support in $B$.  The result directly follows from (\ref{eq_trace_theorem}).
\end{proof}


\subsection{The Laplace equation on subanalytic domains.} We denote by $\Delta$ the Laplace operator, i.e. $\Delta u:=\nabla \pa u$, for $u$ distribution on $M$.
\begin{thm}\label{thm_neumann}
 Let $g\in L^2(M)$ and $\theta\in W^{-\frac{1}{2}}(\pa M)$ be satisfying the compatibility condition:
 \begin{equation*}
<g,1>-<\theta,\tra_{\pa M}1>=0, \end{equation*}
where $1$ denotes the constant function equal to this number. The equation
       $$\begin{cases}
 \Delta u=g \qquad \mbox{ on } M,\\
\gab_\nub \pa u=\theta \qquad \mbox{on } \pa M,
\end{cases}$$
      has a weak solution in $W^{1,2}(M)$, which is unique up to an additive constant.
\end{thm}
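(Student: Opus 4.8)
The plan is to set up a variational formulation and apply the Lax--Milgram theorem (or, equivalently, the Riesz representation theorem on a suitable quotient space), exactly as in the classical theory of the Neumann problem on Lipschitz domains. First I would fix the meaning of ``weak solution'': $u\in W^{1,2}(M)$ is a weak solution if, for every $v\in W^{1,2}(M)$,
\begin{equation*}
 <\pa u,\pa v>=-<g,v>+<\theta,\tra_{\pa M} v>.
\end{equation*}
To see this is the right notion, I would check that if $u$ is smooth enough then applying the trace formula (\ref{eq_trace_theorem}) with $\beta=\pa u$ gives $<\pa u,\pa v>+<\Delta u,v>=<\gub\pa u,\tra_{\pa M}v>$, so the weak formulation encodes both $\Delta u=g$ in the distributional sense (test with $v\in\cc_0^\infty(M)$, using that such $v$ are dense enough and have zero trace) and $\gub\pa u=\theta$ on $\pa M$ (the remaining boundary term matches by Theorem \ref{thm_trace_formula}). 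Note $\pa u\in\lbf^2(M)$ and $\nabla\pa u=\Delta u=g\in L^2(M)$, so indeed $\pa u\in\wbf_\nabla^{1,2}(M)$ and $\gub\pa u$ makes sense as an element of $W^{-\frac12}(\pa M)$ by Theorem \ref{thm_trace_formula} with $p=2$.

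Next I would pass to the quotient. Since $M$ is connected, the kernel of $\pa:W^{1,2}(M)\to\lbf^2(M)$ is exactly the constants $\R$. Work on the Hilbert space $V:=W^{1,2}(M)/\R$, equipped with the norm $\|[u]\|_V:=\|\pa u\|_{L^2(M)}$; this is a genuine norm, and by Corollary \ref{cor_pa_nonrel} (equivalently the Poincar\'e--Wirtinger inequality (\ref{eq_poinc_wirt})) it is equivalent to the quotient norm induced by $W^{1,2}(M)$, so $V$ is complete. The bilinear form $a([u],[v]):=<\pa u,\pa v>$ is well-defined on $V$, bounded, symmetric, and coercive (indeed $a([u],[u])=\|[u]\|_V^2$). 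The right-hand side $\ell(v):=-<g,v>+<\theta,\tra_{\pa M}v>$ is a bounded linear functional on $W^{1,2}(M)$: boundedness of $v\mapsto<g,v>$ is Cauchy--Schwarz, and boundedness of $v\mapsto<\theta,\tra_{\pa M}v>$ follows because $\tra_{\pa M}:W^{1,2}(M)\to W^{\frac12}(\pa M)$ is bounded by definition of the norm on $W^{\frac12}(\pa M)$ and $\theta\in W^{-\frac12}(\pa M)$ is by hypothesis continuous on that space. The compatibility condition $<g,1>-<\theta,\tra_{\pa M}1>=0$ says precisely that $\ell$ vanishes on the constants, hence descends to a bounded functional $\bar\ell$ on $V$. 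Lax--Milgram (here just Riesz, since $a$ is the inner product) then produces a unique $[u]\in V$ with $a([u],[v])=\bar\ell([v])$ for all $[v]\in V$; lifting to $u\in W^{1,2}(M)$ gives the weak solution, and any two solutions differ by an element of $\ker\pa=\R$, giving uniqueness up to an additive constant.

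The one genuinely substantive point — and the main obstacle — is verifying that the weak formulation really is equivalent to the stated boundary value problem, i.e. that the boundary condition $\gub\pa u=\theta$ is correctly captured. This is exactly where Theorem \ref{thm_trace_formula} is needed, and it is the reason the hypothesis $\theta\in W^{-\frac12}(\pa M)$ and the restriction to $p=2$ (which is legitimate since $p_M(E)\ge2$ always) appear. Concretely: given the weak solution $u$, testing against $v\in\cc_0^\infty(M)$ yields $<\pa u,\pa v>=-<g,v>$, so $\Delta u=g$ as distributions, whence $\pa u\in\wbf_\nabla^{1,2}(M)$ and $\gub\pa u\in W^{-\frac12}(\pa M)$ is defined; then for general $v\in W^{1,2}(M)$, subtracting $<\Delta u,v>=<g,v>$ from (\ref{eq_trace_theorem}) applied with $\beta=\pa u$ gives $<\gub\pa u,\tra_{\pa M}v>=<\pa u,\pa v>+<g,v>=<\theta,\tra_{\pa M}v>$. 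Since $\tra_{\pa M}$ is surjective onto $W^{\frac12}(\pa M)$ by definition of the latter, this forces $\gub\pa u=\theta$ in $W^{-\frac12}(\pa M)$. The converse direction — that a genuine solution of the PDE and boundary condition satisfies the weak formulation — is immediate from (\ref{eq_trace_theorem}) with $\beta=\pa u$. I would also remark that all of this goes through verbatim once one knows the three ingredients already established earlier: the Poincar\'e--Wirtinger inequality for connected $M$, the trace operator $\tra_{\pa M}$ with its norm, and the cotrace/trace formula of Theorem \ref{thm_trace_formula}; no new geometric input about the singularities is required here beyond $p_M(E)\ge2$.
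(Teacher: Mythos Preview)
Your proof is correct and follows essentially the same approach as the paper: pass to the quotient $W^{1,2}(M)/\R$, use the Poincar\'e--Wirtinger inequality (\ref{eq_poinc_wirt}) to see that $<\pa u,\pa v>$ makes it a Hilbert space, apply Riesz to the functional $v\mapsto <\theta,\tra_{\pa M}v>-<g,v>$ (which descends by the compatibility condition), and then recover $\Delta u=g$ and $\gub\pa u=\theta$ by testing first with $\varphi\in\cc_0^\infty(M)$ and then invoking the trace formula (\ref{eq_trace_theorem}). Your write-up is somewhat more explicit than the paper's in spelling out the equivalence between the weak formulation and the boundary value problem, but the argument is the same.
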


\begin{proof} We rely on our trace formula (\ref{eq_trace_theorem}) to apply the classical argument.
Let $\mathbf{E}$ denote the space $W^{1,2}(M)$ quotiented by the subspace of the constant functions, and endowed with the quotient topology. By (\ref{eq_poinc_wirt}),
the bilinear form $(u,v)\mapsto <\pa u, \pa v>$ makes of $\mathbf{E}$ a Hilbert space.

As the functional $v\mapsto <\theta,\tra_{\pa M}v>- <g,v>$ is continuous on $W^{1,2}(M)$ and vanishes on $\R$ (the space of the constant functions), it defines a continuous linear functional on $\mathbf{E}$. By Riesz representation theorem,
there is $u\in \mathbf{E}$ such that for all $v$ in this space we have
 \begin{equation}\label{eq_papa}<\pa u,\pa v>=<\theta,\tra_{\pa M}v>-<g,v>,\end{equation}
which implies that for all  $\varphi\in \cc^\infty_0(M)$ we have
 \begin{equation}\label{eq_deltau}<\Delta u,\varphi>=<g,\varphi>-<\theta,\tra_{\pa M}\varphi>=<g,\varphi>.\end{equation}
  Moreover,  for any $\psi\in W^{1,2}(M)$ we have
  \begin{equation}\label{eq_gabdelta}<\gab_\nub \pa u,\tra_{\pa M}\psi>-<\Delta u,\psi>\overset{(\ref{eq_trace_theorem})}=<\pa u,\pa \psi>\overset{(\ref{eq_papa})} =<\theta,\tra_{\pa M}\psi>-<g,\psi>,\end{equation}
so that we can conclude (since $\Delta u\overset{(\ref{eq_deltau})}=g$)
\begin{equation}\label{eq_theta}<\gab_\nub \pa u,\tra_{\pa M}\psi> =<\theta,\tra_{\pa M}\psi>.\end{equation}
   Uniqueness of the solution up to a constant directly follows from (\ref{eq_trace_theorem}), which yields $\pa (u- v)=0$ if $u$ and $v$ are two solutions.
\end{proof}

\begin{thm}\label{thm_neumanndir}
Let $\Gamma_D$ and $\Gamma_N$ be subanalytic open subsets of $\pa M$, with $\Gamma_D\ne \emptyset$ and $\dim \delta M \setminus (\Gamma_D\cup \Gamma_N)\le m-2$. For any $\theta\in W^{-\frac{1}{2}}(\pa M)$, $g\in L^2(M)$, and $f\in W^{1,2}(M)$ such that $\Delta f\in L^2(M)$, the equation
  $$\begin{cases}
 \Delta u=g \qquad \qquad\mbox{ on } M,\\
 \tra_{\pa M} u= \tra_{\pa M} f \;\; \mbox{ on } \Gamma_D, \\
\; \gab_\nub \pa u=\theta \qquad \;\quad \mbox{ on } \Gamma_N,
\end{cases}$$
      has a unique weak solution in $W^{1,2}(M)$.
\end{thm}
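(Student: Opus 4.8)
The plan is to reduce the mixed Dirichlet--Neumann problem to the purely Neumann case already treated in Theorem~\ref{thm_neumann}, by subtracting off the data $f$ and working in a closed subspace of $W^{1,2}(M)$ that encodes the Dirichlet condition on $\Gamma_D$. Concretely, set $w:=u-f$; then the equation for $u$ is equivalent to $\Delta w = g-\Delta f=:\tilde g\in L^2(M)$ on $M$, $\tra_{\pa M} w=0$ on $\Gamma_D$, and $\gub\pa w=\theta-\gub\pa f=:\tilde\theta$ on $\Gamma_N$ (here $\gub\pa f$ makes sense as an element of $W^{-\frac12}(\pa M)$ because $\pa f\in\lbf^2(M)$ and $\nabla\pa f=\Delta f\in L^2(M)$, so $\pa f\in\wbf^{1,2}_\nabla(M)$, and Theorem~\ref{thm_trace_formula} applies with $p=2$). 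So it suffices to solve the problem with $f=0$, i.e. to find $w$ in the space
$$\mathbf{H}:=\{v\in W^{1,2}(M):\tra_{\pa M} v=0\text{ on }\Gamma_D\}=W^{1,2}(M,\Gamma_D),$$
the last equality by Proposition~\ref{pro_WMA}; this is a closed subspace of $W^{1,2}(M)$.

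Next I would put a Hilbert space structure on $\mathbf H$ using the bilinear form $a(v_1,v_2):=<\pa v_1,\pa v_2>$. The key point is that $a$ is coercive on $\mathbf H$: since $\Gamma_D$ is a nonempty open subanalytic subset of $\pa M$, the generalized Poincar\'e inequality of Proposition~\ref{pro_gen_poinc1} applied with $U=\Gamma_D$ gives, for $v\in\mathbf H$,
$$\|v\|_{L^2(M)}\le C\|\pa v\|_{L^2(M)}+C\|\tra_{\pa M}v\|_{L^2(\Gamma_D)}=C\|\pa v\|_{L^2(M)},$$
so $a(v,v)^{1/2}$ is equivalent to the full $W^{1,2}$ norm on $\mathbf H$; in particular $\pa:\mathbf H\to\lbf^2(M)$ is injective, which will give uniqueness. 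The functional $v\mapsto <\tilde\theta,\tra_{\pa M}v>-<\tilde g,v>$ is continuous on $W^{1,2}(M)$, hence on $\mathbf H$; by Riesz representation there is a unique $w\in\mathbf H$ with
$$<\pa w,\pa v>=<\tilde\theta,\tra_{\pa M}v>-<\tilde g,v>\qquad\text{for all }v\in\mathbf H.$$

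It remains to check that this $w$ is a weak solution, i.e. that it satisfies $\Delta w=\tilde g$ on $M$ and $\gub\pa w=\tilde\theta$ on $\Gamma_N$. Testing against $\varphi\in\cc^\infty_0(M)\subset\mathbf H$ gives $<\Delta w,\varphi>=<\tilde g,\varphi>$ (the trace term drops), so $\Delta w=\tilde g\in L^2(M)$ in the distributional sense, whence $\pa w\in\wbf^{1,2}_\nabla(M)$ and the trace formula (\ref{eq_trace_theorem}) is available. Then for any $v\in\mathbf H$,
$$<\gub\pa w,\tra_{\pa M}v>-<\Delta w,v>=<\pa w,\pa v>=<\tilde\theta,\tra_{\pa M}v>-<\tilde g,v>,$$
and using $\Delta w=\tilde g$ this collapses to $<\gub\pa w,\tra_{\pa M}v>=<\tilde\theta,\tra_{\pa M}v>$ for all $v\in\mathbf H$. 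The remaining obstacle --- and the one place where the hypothesis $\dim\delta M\setminus(\Gamma_D\cup\Gamma_N)\le m-2$ enters --- is to deduce from this that $\gub\pa w=\tilde\theta$ as functionals on $\tra_{\pa M}(W^{1,2}(M))$ \emph{restricted to} $\Gamma_N$, i.e. that the two sides agree when tested against traces of functions supported near $\Gamma_N$. For this I would argue that $\tra_{\pa M}(\mathbf H)$ contains all traces $\tra_{\pa M}\psi$ with $\psi\in W^{1,2}_{M\cup\Gamma_N}(M)$ (such $\psi$ vanish near $\Gamma_D$ since $\adh{\supp\psi}\cap\delta M$ is a compact subset of $\Gamma_N$, disjoint from $\adh{\Gamma_D}$ up to the negligible bad set $\delta M\setminus(\Gamma_D\cup\Gamma_N)$ of dimension $\le m-2$), so $\gub\pa w$ and $\tilde\theta$ induce the same element of $W^{\frac12-1}(\Gamma_N)$; by definition this is exactly the Neumann condition $\gub\pa w=\tilde\theta$ on $\Gamma_N$. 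Unwinding $w=u-f$, $\tilde g=g-\Delta f$, $\tilde\theta=\theta-\gub\pa f$ then yields existence; uniqueness follows since the difference of two solutions lies in $\mathbf H$, is $\Delta$-harmonic with $\gub\pa(\cdot)=0$ on $\Gamma_N$, hence by (\ref{eq_trace_theorem}) has $<\pa(u_1-u_2),\pa v>=0$ for all $v\in\mathbf H$, so $\pa(u_1-u_2)=0$, and then coercivity of $a$ on $\mathbf H$ forces $u_1-u_2=0$.
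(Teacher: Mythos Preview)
Your approach is essentially the same as the paper's: reduce to $f=0$, equip $\mathbf H=W^{1,2}(M,\Gamma_D)$ with the Hilbert structure coming from the generalized Poincar\'e inequality, apply Riesz, then read off the PDE and the Neumann condition via the trace formula. The existence part is fine and matches the paper.

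There is, however, a genuine gap in your uniqueness argument, and it is tied to a misplacement of the hypothesis $\dim \delta M\setminus(\Gamma_D\cup\Gamma_N)\le m-2$. You write that from $\Delta w=0$ and $\gub\pa w=0$ on $\Gamma_N$ one gets, ``by (\ref{eq_trace_theorem})'', that $<\pa w,\pa v>=0$ for all $v\in\mathbf H$. The trace formula only gives $<\pa w,\pa v>=<\gub\pa w,\tra_{\pa M}v>$; you still have to show this pairing vanishes. Knowing that $\gub\pa w$ vanishes on $\Gamma_N$ (a statement about test functions in $W^{1,2}_{M\cup\Gamma_N}(M)$) and that $\tra_{\pa M}v$ vanishes on $\Gamma_D$ does \emph{not} by itself force $<\gub\pa w,\tra_{\pa M}v>=0$, because $\gub\pa w$ is a functional in $W^{-\frac12}(\pa M)$, not an $L^2$ function, so one cannot simply split the pairing over $\Gamma_D$ and $\Gamma_N$. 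What is missing is precisely Corollary~\ref{cor_AB} (together with Proposition~\ref{pro_WMA} and Corollary~\ref{cor_WMB}): these translate the boundary conditions into $u\in W^{1,2}(M,\Gamma_D)$ and $\pa u\in\wbf^{1,2}_\nabla(M,\Gamma_N)$ and then give the integration-by-parts identity $<\pa u,\pa u>=-<\nabla\pa u,u>=0$ directly. This is exactly how the paper closes the argument, and it is where the dimension hypothesis is actually used (through the bound on $p_M$ and the density results behind Corollary~\ref{cor_AB}).

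Conversely, your claim that the dimension hypothesis is ``the one place'' needed to verify $\gub\pa w=\tilde\theta$ on $\Gamma_N$ is not right: that step follows simply because $W^{1,2}_{M\cup\Gamma_N}(M)\subset\mathbf H$ (functions supported away from $\delta M\setminus\Gamma_N$ have zero trace on $\Gamma_D$), with no appeal to the codimension condition. So the hypothesis is really doing its work in uniqueness, not in existence.
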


\begin{proof}
Possibly replacing  $g$ with $(g-\Delta f)$, we can assume $f= 0$.
The argument is then analogous to the one used in the proof of the previous theorem.
By our generalized Poincar\'e-Wirtinger inequality (\ref{ineq_poin}),  the bilinear form $(u,v)\mapsto <\pa u, \pa v>$ makes of $W^{1,2}(M,\Gamma_D)$ a Hilbert space.
The functional $v\mapsto<\theta,\tra_{\pa M}v>- <g,v>$ being continuous on $W^{1,2}(M,\Gamma_D)$, it follows from Riesz representation theorem that
there is $u\in W^{1,2}(M,\Gamma_D)$ such that (\ref{eq_papa}) holds for all $v\in W^{1,2}(M,\Gamma_D)$.
This implies that (\ref{eq_deltau}) holds for all  $\varphi\in \cc^\infty_0(M)$.
  Moreover, this also entails that (\ref{eq_gabdelta}) holds for any $\psi\in W^{1,2}(M,\Gamma_D)$,
so that by (\ref{eq_deltau}), we can conclude that (\ref{eq_theta}) holds as well for such $\psi$,
which shows that $\gub \pa u= \theta$ on $\Gamma_N$.

We now turn to show uniqueness of the solution, which reduces to show that the only solution of the equation $\Delta u=0$, for the conditions $\gub \pa u= 0$ on $\Gamma_N$ and $\tra_{\pa M} u = 0$ on $\Gamma_D$, is zero. By Proposition \ref{pro_WMA} and Corollary \ref{cor_WMB}, it means that $\pa u \in \wbf_\nabla^{1,2}(M,\Gamma_N)$ and $u\in W^{1,2}(M,\Gamma_D)$, which by Corollary \ref{cor_AB} entails:
$$<\pa u,\pa u>= -<\nabla \pa u, u>=0.$$
As $\Gamma_D$ is a nonempty open subset of $\pa M$, this yields $u=0$.
\end{proof}

\begin{section}{The case where $p$ is large}
In this section, we focus on the large values of $p$ and solve an optimization problem. We first recall some needed facts.
\subsection{Morrey's embedding.}
       Given $x$ and  $y$ in $M$,  let:$$d_{M}(x,y):=\inf \{ lg(\gamma):\gamma:[0,1]\to M, \mbox{ $\cc^0$ subanalytic arc joining } x\mbox{ and } y\},$$
  where $lg(\gamma)$ stands for the length of the subanalytic arc $\gamma$ (which is piecewise $\cc^1$).
 This defines a metric on $M$, generally referred as {\bf the inner metric} of $M$.

A function $u$ on $M$ is {\bf H\"older continuous with respect to the inner metric} (with exponent $\alpha>0$) if there is  a constant $C$  such that for all $x$ and $y$ in $M$
$$ |u(x)-u(y)|\le C\; d_M(x,y)^\alpha.$$

Given $u\in \cc^{0,\alpha}(M)$ we set \begin{equation*}
                                      |u|_{\check{\cc}^{0,\alpha}(M)}:= \sup \big\{\frac{|u(x)-u(y)|}{ d_M(x,y)^\alpha}:x,y\in  M, x\ne y\big\},
                                     \end{equation*}
                                     as well as 
                                     \begin{equation*}
                                      ||u||_{\check{\cc}^{0,\alpha}(M)}:=||u||_{L^\infty(M)}+|u|_{\check{\cc}^{0,\alpha}(M)}.  
                                     \end{equation*}
                                     
When $M$ is normal (as well as bounded and subanalytic), it is possible to see that every function on $M$ that is H\"older continuous with respect to the inner metric is actually H\"older continuous with respect to the euclidean metric.

                                   We  have the following generalization of Morrey's embedding.
\begin{thm}\label{cor_morrey}\cite[Corollary $6.3$]{lprime}
There  is a positive real number $\alpha$ such that for all $p$  sufficiently large the embedding $$(\cc^\infty(\mba),||\cdot||_{W^{1,p}(M)} )\hookrightarrow (\check{\cc}^{0,\alpha}(M),||\cdot||_{\check{\cc}^{0,\alpha}(M)})$$  is continuous and therefore extends to a compact embedding $W^{1,p}(M)\hookrightarrow \check{\cc}^{0,\alpha}(M)$.
\end{thm}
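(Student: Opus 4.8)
The plan is to reduce Morrey's embedding on a general bounded subanalytic manifold $M$ to the \emph{normal} case, where the classical analysis on Lipschitz manifolds with boundary applies, and then bootstrap from one chart to a global statement. First I would invoke a $\cc^\infty$ normalization $h:\mc\to M$ (Proposition \ref{pro_normal_existence}), which identifies $W^{1,p}(M)$ with $W^{1,p}(\mc)$ with equivalent norms because $h$ and $h^{-1}$ have bounded derivatives; since the inner metrics of $\mc$ and $M$ are bi-Lipschitz equivalent via $h$, it suffices to prove the statement for $\mc$, i.e. we may assume $M$ is normal. The point of normality is that $M\cup\pa M$ is then, locally at every frontier point, a Lipschitz manifold with boundary, so the usual Sobolev/Morrey machinery is available on small charts.

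Next I would set up the local estimate. For each point of $\mba$ choose a neighborhood $U$ in $\R^n$ such that $U\cap M$ is the interior of a (bi-Lipschitz image of a) Lipschitz manifold with boundary; on such a piece the classical Morrey inequality gives, for $p>m$, a bound $|\varphi(x)-\varphi(y)|\le C\,|x-y|^{1-m/p}\,||\pa\varphi||_{L^p(U\cap M)}$ for $\varphi\in\cc^\infty(\mba)$, hence also an $L^\infty$ bound on $U\cap M$ using Theorem \ref{thm_derlp_implique_lp} (or the Poincar\'e-type inequality (\ref{ineq_poin})) to control $||\varphi||_{L^\infty}$ by $||\varphi||_{W^{1,p}(M)}$. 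One then passes from the euclidean distance estimate on each chart to the inner-distance estimate $d_M$: because $\mba$ is compact, finitely many charts cover it, and any two points of $M$ can be joined by a subanalytic arc of length comparable to $d_M(x,y)$; subdividing this arc according to the finite cover and summing the local H\"older bounds yields $|\varphi(x)-\varphi(y)|\le C\,d_M(x,y)^\alpha$ with $\alpha=1-m/p$ (or any smaller fixed $\alpha>0$ once $p$ is large enough), i.e. continuity of $(\cc^\infty(\mba),||\cdot||_{W^{1,p}(M)})\hookrightarrow(\check{\cc}^{0,\alpha}(M),||\cdot||_{\check{\cc}^{0,\alpha}(M)})$. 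Since $\cc^\infty(\mba)$ is dense in $W^{1,p}(M)$ (by the density results of Section 3, e.g. Theorem \ref{thm_dense_E} in the relevant range, or the standard de Rham regularization mentioned in Remark \ref{rem_dense_deltaM}(a)), the embedding extends to all of $W^{1,p}(M)$.

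Finally I would upgrade continuity to compactness: the unit ball of $W^{1,p}(M)$ maps into a bounded subset of $\check{\cc}^{0,\alpha}(M)$, which by the Arzel\`a--Ascoli theorem (applied to the metric space $(M,d_M)$, whose completion $\overline{M}$ is compact) is relatively compact in $\cc^0(M)$ with the sup norm; interpolating the H\"older seminorm then shows relative compactness in $\check{\cc}^{0,\alpha'}(M)$ for any $\alpha'<\alpha$, which (after renaming $\alpha$) gives the stated compact embedding $W^{1,p}(M)\hookrightarrow\check{\cc}^{0,\alpha}(M)$.

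The main obstacle is the passage from local charts to the global inner-distance H\"older estimate: near a singular frontier point $M$ need not be metrically conical, so a naive covering argument could produce constants depending on how badly the arc realizing $d_M(x,y)$ winds near the singularity. The way around this is precisely the normalization step together with the Wing Lemma bound (\ref{eq_pa1}): after normalizing, $M\cup\pa M$ is genuinely a finite union of Lipschitz manifolds with boundary near each frontier point, the singular set $M_\sigma$ has codimension $\ge 2$ and is negligible for both the integration and the arc-joining, and compactness of $\overline{\mc}$ makes the finite subcover and the comparability of $d_M$ with chart-wise euclidean distances uniform. This is exactly the content quoted from \cite[Corollary 6.3]{lprime}, so I would cite that proof for the uniformity of the constants rather than reprove it from scratch.
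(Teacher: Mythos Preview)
The paper does not prove this theorem; it is quoted verbatim from \cite[Corollary~6.3]{lprime} and stated without argument. There is therefore no proof in the present paper to compare your proposal against.

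Your sketch is a plausible outline of how such a result is established, and you correctly isolate the genuine difficulty (uniformity of the local Morrey constants near the singular locus $M_\sigma$), which you ultimately defer to \cite{lprime}. Two corrections on the internal references, however. First, your citation for density of $\cc^\infty(\mba)$ in $W^{1,p}(M)$ is wrong: Theorem~\ref{thm_dense_E} concerns vector fields in $\wbf_\nabla^{1,p'}$ for \emph{small} $p$ (namely $p\le p_M(E)$), not functions in $W^{1,p}$ for large $p$; the relevant statement available in this paper is Theorem~\ref{thm_trace}(i). Second, even after normalization, points of $M_\sigma=\delta M\setminus\pa M$ are by definition \emph{not} covered by neighborhoods in which $M\cup\pa M$ is a Lipschitz manifold with boundary, so your finite-cover-and-patch argument as written does not reach all of $\mba$; handling the behaviour of arcs and constants near $M_\sigma$ is exactly the nontrivial content of \cite[Corollary~6.3]{lprime} that you are (rightly) citing rather than reproving.
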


\begin{subsection}{The trace when $p$ is large.}\label{sect_trace_p_large}In the case where $p$ is large, it was shown in \cite{trace} that the trace operator is $L^p$ bounded. Moreover, it was established that one can define the trace on any subanalytic subset $A$ of $\delta M$, possibly of positive codimension.
More precisely:

\begin{thm}\label{thm_trace}\cite{trace}
 Assume that $M$ is normal and let $A$ be any subanalytic subset of $\delta M$. For all $p\in [1,\infty)$ sufficiently large, we have:
\begin{enumerate}[(i)]
\item
$\cc^\infty(\adh{M})$ is dense in $W^{1,p}(M)$.
\item The linear operator
\begin{equation*}\label{trace}
\cc^\infty(\adh{M})\ni \varphi \mapsto \varphi_{|A}\in L^p(A,\hn^k), \qquad k:=\dim A,
\end{equation*}
is continuous in the norm $||\cdot ||_{W^{1,p}(M)}$ and thus extends to a mapping $\tra_A:W^{1,p}(M)\to L^p(A,\hn^k)$.
\item If  $\st$ is a stratification of $A$, then $\cc^\infty_{\adh{M}\setminus\adh{A}}(\mba)$ is a dense subspace of
$\bigcap\limits_{Y\in\st}\ker \tra_Y$.
\end{enumerate}\end{thm}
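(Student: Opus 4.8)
The plan is to treat the three assertions of Theorem~\ref{thm_trace} as a package coming from \cite{trace}, so the ``proof'' here is essentially a matter of citing the right results and verifying that the hypotheses (normality of $M$, $p$ large) make them applicable in the present form. Assertion $(i)$ is the statement that $\cc^\infty(\mba)$ is dense in $W^{1,p}(M)$ for $M$ normal and $p$ large: since $M$ normal means $M\cup\delta M$ is, up to the singular part $M_\sigma$ of the boundary, a compact Lipschitz manifold with boundary, and since the relevant density theorem was proved in \cite{trace} exactly in this setting, I would simply invoke it. Assertion $(ii)$ is the $L^p$-boundedness of the restriction map $\varphi\mapsto\varphi_{|A}$ on $\cc^\infty(\mba)$ in the $W^{1,p}(M)$-norm; this is the content of the trace theorem of \cite{trace} for arbitrary subanalytic $A\subset\delta M$ (of any codimension), valid for $p$ sufficiently large. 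The continuity of this operator on the dense subspace $\cc^\infty(\mba)$ from $(i)$ then yields the unique continuous extension $\tra_A:W^{1,p}(M)\to L^p(A,\hn^k)$ by the usual bounded-linear-extension argument.

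For assertion $(iii)$, the claim is that $\cc^\infty_{\mba\setminus\adh A}(\mba)$ is dense in $\bigcap_{Y\in\st}\ker\tra_Y$. One inclusion is immediate: a function smooth up to $\mba$ and supported away from $\adh A$ restricts to $0$ on each stratum $Y$, hence lies in every $\ker\tra_Y$, and by the continuity of each $\tra_Y$ from $(ii)$ the closure of $\cc^\infty_{\mba\setminus\adh A}(\mba)$ is still contained in $\bigcap_Y\ker\tra_Y$. The substantive direction --- that any $u$ annihilated by all the traces $\tra_Y$ can be approximated in the $W^{1,p}$-norm by smooth functions supported off $\adh A$ --- is again exactly the density statement established in \cite{trace}, obtained there by stratifying $A$ and inductively pushing the support off successive strata (using that the codimension of each stratum, compared to $p$ being large, allows the relevant cut-off/capacity estimate). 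So I would cite that result directly rather than reprove it.

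The only point requiring a word of care is that Theorem~\ref{thm_trace} as stated bundles $(i)$, $(ii)$, $(iii)$ together with a single quantifier ``for all $p$ sufficiently large,'' whereas in \cite{trace} the three facts may come with a priori different thresholds $p_1,p_2,p_3$ on $p$; one takes $p\ge\max(p_1,p_2,p_3)$. Also, the stratification $\st$ in $(iii)$ should be taken to be a subanalytic stratification of $A$ compatible with the ambient structure (so that each $\tra_Y$ is defined by $(ii)$ applied to the subanalytic set $Y$), which is the natural reading and the one used in \cite{trace}. With these conventions the statement is a direct transcription of the results of \cite{trace}, and the proof consists of assembling them; I do not expect any genuine obstacle, the ``hard part'' having already been done in \cite{trace} --- namely the capacity-type argument showing that sets of codimension $\ge 1$ (indeed of any codimension, once $p$ is large enough) are negligible for the $W^{1,p}$ trace.
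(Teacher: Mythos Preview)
Your proposal is correct and matches the paper's treatment: Theorem~\ref{thm_trace} is stated with the citation \cite{trace} and is not re-proved in the paper, so assembling the three assertions by direct reference to \cite{trace} is exactly what the paper does. Your additional remarks on unifying the thresholds for $p$ and on the subanalyticity of the strata are accurate and harmless elaborations.
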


By {\bf stratification} of a set, we mean a partition of it into subanalytic manifolds.

In the case where $M$ fails to be normal, it is then possible to make use of a normalization, to obtain a trace operator $\tra_A: W^{1,p}(M)\to L^p(A,\hn^k)^l$, $k=\dim A$, with $p$ large enough and $l$ standing for the maximal number of connected components of $M$ at a point of $\delta M$,  as we did for $\tra_{\pa M}$ (see \cite[section 3]{trace} for more details).

Fix now a stratification $\st$ of $\delta M$, and define a mapping $\tra \, u: \mba \to \R^l$  by setting:
\begin{equation}\label{eq_tr}\tra \, u(x):=\begin{cases}
\tra_S u(x)\qquad \qquad \mbox{ if $x\in S\in \st$},  \\
(u(x),0,\dots,0) \quad \mbox{ whenever $x\in M$}.
  \end{cases}
\end{equation}
It easily follows from Theorem \ref{cor_morrey} that this mapping is independent of the chosen stratification.
 If $M$ is normal, one can see that $\tra\, u$ is a continuous function (for $p$ large) on $\delta M$ that extends continuously $u$.

\begin{cor}\label{cor_pbig_vanishing}
 Let $A$ be any subanalytic subset of $\delta M$. For $p$ sufficiently large,
 (\ref{eq_vanishing}) holds for all $u\in W^{1,p}(M)$ satisfying $\tra\, u= 0_{\R^l}$ on $A$  and $\beta\in \wbf_\nabla^{1,p'}(M,\delta M\setminus \adh A)$.
\end{cor}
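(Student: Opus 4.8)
The plan is to reduce this statement to Theorem \ref{thm_trace_formula} (or rather the integration-by-parts identity (\ref{eq_trace_theorem})) together with a suitable density statement, exactly in the spirit of the proofs of Corollary \ref{cor_AB} and Theorem \ref{thm_dense_MBE}. First I would fix $p$ large enough so that all the ``$p$ large'' results of Section $4$ (Morrey's embedding, Theorem \ref{thm_trace}, and in particular the fact that $\tra\, u$ is well defined) apply, and set $B:=\delta M\setminus\adh A$, so that $\beta\in\wbf_\nabla^{1,p'}(M,B)$. Since the claim (\ref{eq_vanishing}) is bilinear and continuous in $u$ with respect to the $W^{1,p}(M)$-norm (the pairing $<\pa u,\beta>+<u,\nabla\beta>$ is estimated by $\|u\|_{W^{1,p}(M)}\|\beta\|_{\wbf_\nabla^{1,p'}(M)}$ using H\"older), it suffices to establish it for $u$ in a dense subspace of $\{u\in W^{1,p}(M):\tra\, u=0_{\R^l}\text{ on }A\}$.

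The key step is to invoke Theorem \ref{thm_trace} (iii): fixing a stratification $\st$ of $A$ (inside a stratification of $\delta M$), the space $\cc^\infty_{\mba\setminus\adh A}(\mba)$ is dense in $\bigcap_{Y\in\st}\ker\tra_Y$, and by the independence of $\tra\, u$ from the stratification (a consequence of Theorem \ref{cor_morrey}) this intersection is precisely the set of $u\in W^{1,p}(M)$ with $\tra\, u=0$ on $A$. So I would pick a sequence $u_i\in\cc^\infty_{\mba\setminus\adh A}(\mba)$ converging to $u$ in $W^{1,p}(M)$. For each such $u_i$, $\adh{\supp\, u_i}$ is a compact subset of $\mba\setminus\adh A$, hence disjoint from $\adh A$; thus $u_i$ vanishes in a neighborhood of $\adh A\supset\delta M\setminus B$, i.e. $u_i\in W^{1,p}_{\mba\setminus(\delta M\setminus B)}(M)=W^{1,p}_{M\cup B}(M)$. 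By the very definition of $\wbf_\nabla^{1,p'}(M,B)$, identity (\ref{eq_vanishing}) holds for this $u_i$ against our $\beta$. Passing to the limit $i\to\infty$, using the continuity just noted, gives (\ref{eq_vanishing}) for $u$ itself.

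The main obstacle is the identification of $\{u\in W^{1,p}(M):\tra\,u=0_{\R^l}\text{ on }A\}$ with $\bigcap_{Y\in\st}\ker\tra_Y$: one must check that vanishing of the stratawise traces $\tra_Y u$ on each stratum $Y$ of $A$ is equivalent to vanishing of the globally defined $\tra\,u$ on $A$, which follows from the fact (from Theorem \ref{cor_morrey} and the remarks following (\ref{eq_tr})) that for $p$ large $\tra\,u$ is the restriction to $\delta M$ of a continuous extension of $u$, hence restricts compatibly on each stratum. The non-normal case is handled, as everywhere in the paper, by passing to a $\cc^\infty$ normalization $h:\mc\to M$, which identifies the Sobolev spaces and the trace operators (this is why the target is $\R^l$ with $l$ the maximal number of local components); the argument above then runs verbatim on $\mc$.
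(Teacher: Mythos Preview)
Your proof is correct and follows exactly the route the paper intends: the corollary is stated without proof because it is meant to be an immediate consequence of Theorem~\ref{thm_trace}~(iii), and that is precisely what you do --- approximate $u$ by elements of $\cc^\infty_{\mba\setminus\adh A}(\mba)\subset W^{1,p}_{M\cup B}(M)$, apply the defining property of $\wbf_\nabla^{1,p'}(M,B)$, and pass to the limit. One small remark: your opening sentence mentions reducing to Theorem~\ref{thm_trace_formula} and identity~(\ref{eq_trace_theorem}), but your actual argument never uses them (nor should it, since that theorem is restricted to $p\le 2$); the definition of $\wbf_\nabla^{1,p'}(M,B)$ is all you need on the $\beta$ side.
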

 \begin{rem}
   Theorem \ref{thm_trace} entails that for all $p$ sufficiently large
  \begin{equation*}\label{eq_incl_paire}\{u\in W^{1,p}(M):\tra \, u=0_{\R^l} \;\;\mbox{on $A$}\}\subset W^{1,p}(M,A).\end{equation*}
  It can be seen that the reversed inclusion holds. In other words, when $p$ is large, Proposition \ref{pro_WMA} holds for any subanalytic subset $A$ of $\delta M$, possibly of positive codimension in $\delta M$. The proof of this fact, which  will not be needed, is however technical.
 \end{rem}

%

We recall that we assume that $M$ is connected.

\begin{pro}\label{pro_gen_poinc1}(Generalized Poincar\'e inequality)
 Let $A$ be a nonempty subanalytic subset of $\mba$. For every $p\in [1,\infty)$ sufficiently large, there is a constant $C$ such that we have for each $u\in W^{1,p}(M)$ and $q\in [1,\infty]$:
 \begin{equation}\label{ineq_poin_pig}
 ||u||_{L^q(M)}\le C ||\pa u||_{L^p(M)}+C||\tra\, u||_{L^p(A,\hn^k)},\quad k:=\dim A.
 \end{equation} 
\end{pro}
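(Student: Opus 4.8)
The strategy is to reduce to the case where $M$ is normal via a $\cc^\infty$ normalization, then combine Morrey's embedding (Theorem \ref{cor_morrey}), the $L^p$-boundedness of the trace on $A$ (Theorem \ref{thm_trace}), and the Poincar\'e--Wirtinger inequality (\ref{eq_poinc_wirt}). First I would observe that a $\cc^\infty$ normalization $h:\mc\to M$ identifies $W^{1,p}(M)$ with $W^{1,p}(\mc)$ with equivalent norms (because $|D_xh|$ and $|D_xh^{-1}|$ are bounded), and carries $A$ to a subanalytic subset $h^{-1}(A)$ of $\delta\mc$ of the same dimension while preserving, up to bounded factors, all the relevant $L^p$ and $L^q$ norms and the trace operator; the constant $C$ only gets multiplied by a bound depending on $h$. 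So it is enough to prove (\ref{ineq_poin_pig}) for $M$ normal. Note that since $M$ is bounded, once $q=\infty$ is handled the finite $q$ follow for free from $\|u\|_{L^q(M)}\le \mathcal H^m(M)^{1/q}\|u\|_{L^\infty(M)}$, so it suffices to establish the inequality for $q=\infty$.

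For $M$ normal, fix $p$ large enough that all three ingredients apply. The core is a contradiction/compactness argument. Suppose (\ref{ineq_poin_pig}) with $q=\infty$ fails; then there is a sequence $u_i\in W^{1,p}(M)$ with $\|u_i\|_{L^\infty(M)}=1$ and $\|\pa u_i\|_{L^p(M)}+\|\tra\, u_i\|_{L^p(A,\hn^k)}\to 0$. By (\ref{eq_poinc_wirt}), choosing additive constants $a_i$ with $\|u_i-a_i\|_{L^p(M)}\le C\|\pa u_i\|_{L^p(M)}\to 0$, the sequence $u_i-a_i$ is bounded in $W^{1,p}(M)$ (the $L^p$ part tends to $0$, the gradient part tends to $0$); hence by the compact embedding $W^{1,p}(M)\hookrightarrow\check{\cc}^{0,\alpha}(M)$ of Theorem \ref{cor_morrey} a subsequence of $u_i-a_i$ converges in $\check{\cc}^{0,\alpha}(M)$, and since its $L^p$ norm goes to $0$ the limit is $0$; thus $u_i-a_i\to 0$ uniformly on $M$. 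As $\tra_A$ is $L^p(A,\hn^k)$-continuous on $W^{1,p}(M)$ (Theorem \ref{thm_trace}(ii)) and agrees with restriction of $\cc^\infty(\mba)$ functions, we get $\tra\,(u_i-a_i)= \tra\,u_i - a_i\to 0$ in $L^p(A,\hn^k)$; but $\tra\, u_i\to 0$ as well by hypothesis, so $a_i\cdot\hn^k(A)^{1/p}\to 0$, i.e. $a_i\to 0$ (here $A\ne\emptyset$ and $\hn^k(A)>0$ with $k=\dim A$, so this quantity is a positive constant). Combining $u_i-a_i\to 0$ uniformly and $a_i\to0$ yields $u_i\to 0$ in $L^\infty(M)$, contradicting $\|u_i\|_{L^\infty(M)}=1$.

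A small point to address is that $\tra\,u$ as defined in (\ref{eq_tr}) is $\R^l$-valued when $M$ is not normal, but after normalizing $M$ is normal, $l=1$, and $\tra\,u$ is just the continuous extension of $u$ to $\delta M$, so no vector-valued bookkeeping is needed in the main argument; the normalization step records how the $\R^l$-valued trace on the original $M$ corresponds to the scalar trace upstairs. The main obstacle is making the normalization reduction clean — in particular verifying that $h$ pulls back the Hausdorff measure $\hn^k$ on $A\subset\delta M$ to something comparable to $\hn^k$ on $h^{-1}(A)\subset\delta\mc$ (which again follows from the two-sided bound on $Dh$ via the extension $\adh h$ of Proposition \ref{pro_normal_existence}), and that the trace operator intertwines correctly under $h$; once that is in place, the compactness argument above is routine and the constant $C$ depends only on $M$, $A$, $p$.
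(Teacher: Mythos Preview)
Your proof is correct and follows essentially the same compactness--contradiction argument as the paper: assume failure, normalize to $\|u_i\|_{L^\infty}=1$ with $\|\pa u_i\|_{L^p}+\|\tra\,u_i\|_{L^p(A)}\to 0$, use the compact Morrey embedding (Theorem~\ref{cor_morrey}) to extract a limit, and reach a contradiction via the trace on $A$. The paper's version is more direct in two places: it does not normalize $M$ (Theorem~\ref{cor_morrey} and the $\R^l$-valued trace already apply to non-normal $M$, and the trace of a nonzero constant is nonzero at every point of $A$), and it skips the Poincar\'e--Wirtinger detour by observing that $\|u_i\|_{L^\infty}=1$ together with $\hn^m(M)<\infty$ already bounds $\|u_i\|_{L^p}$, so $u_i$ itself is bounded in $W^{1,p}(M)$ and one can pass to a subsequential $L^\infty$-limit $u_\infty$ directly, conclude $\pa u_\infty=0$, hence $u_\infty$ constant, hence zero by $\tra\,u_\infty=0$ on $A$.
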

\begin{proof}
 We derive it from the compactness of the embedding provided by Theorem \ref{cor_morrey} using a very classical argument. Assume that (\ref{ineq_poin_pig}) fails for some   $p$ sufficiently large for the latter theorem to apply, i.e. assume that there is a sequence $u_i\in W^{1,p}(M)$ satisfying $||u_i||_{L^\infty(M)}= 1$ (it suffices to show the inequality for  $q=\infty$) and $\lim ||\tra \, u_i||_{L^p(A,\hn^k)}=\lim ||\pa u_i||_{L^p(M)}=0$. By the compactness of Morrey's embedding (Theorem \ref{cor_morrey}), extracting a subsequence if necessary,  we can assume this sequence to be convergent in $L^\infty(M)$. Its limit $u_\infty\in L^\infty(M)$ then clearly satisfies $\pa u_\infty= 0$, as distribution. As $M$ is connected, $u_\infty$ must be constant (almost everywhere), and since   $  \tra\, u_\infty = 0$ on $A\ne \emptyset$,  this constant must be zero.
\end{proof}

\end{subsection}

\subsection{Application to an optimization problem.}
 In this section, we study Problem (\ref{eq_argmin_intro}) in our framework. More precisely, given a function $f\in  W^{1,p}(M)$ (with $p$ large)  and a subanalytic subset $A$ of $\mba$, we discuss the problem  of determining (see (\ref{eq_tr}) for $\tra\, u$)
 \begin{equation}\label{eq_argmin}\argmin_{u\in W^{1,p}(M),\;\; \tra\, u= \tra\, f \mbox{ on $A$}} ||\pa u||_{L^p(M)}.\end{equation}

\medskip

It is well-known \cite{clarkson} that the space $\lbf^p(M)$ is uniformly convex for all $p\in(1,\infty)$. As a matter of fact \cite{fort}, if $E$ is a closed linear subspace of $\lbf^p(M)$ and $\ubf\in \lbf^p(M)$, there is a unique $P_E(\ubf)\in E$ such that $||\ubf-P_E(\ubf)||_{L^p(M)}$ realizes the distance to $E$, i.e.
\begin{equation*}||\ubf-P_E(\ubf)||_{L^p(M)}=\inf_{\mathbf{v}\in E} ||\ubf-\mathbf{v}||_{L^p(M)}.\end{equation*}

The only difference with the situation that occurs in Hilbert's spaces is that  $P_E$ may fail to be a linear map and $P_E^{-1}(0)$, sometimes denoted $E^\perp$, is not necessarily a vector subspace of $\lbf^p(M)$.

\begin{thm}\label{thm_opt} For every subanalytic subset $A$ of $ \mba$ and $f\in W^{1,p}(M)$,
   Problem (\ref{eq_argmin}) has a unique solution 
   $u\in W^{1,p}(M)$,
for all $p\in (1,\infty)$ sufficiently large. This solution is H\"older continuous on $M$ with respect to the inner metric.
\end{thm}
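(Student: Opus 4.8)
The plan is to reformulate Problem (\ref{eq_argmin}) as a minimization of the $\lbf^p$ norm of $\pa u$ over the affine subspace of $W^{1,p}(M)$ consisting of functions whose trace agrees with $\tra\, f$ on $A$, and then to transfer this to the uniformly convex space $\lbf^p(M)$ so as to apply the existence and uniqueness of best approximation recalled just before the statement. Concretely, first I would fix $p$ large enough that Theorem \ref{cor_morrey} (Morrey's embedding $W^{1,p}(M)\hookrightarrow \check{\cc}^{0,\alpha}(M)$ is compact), Theorem \ref{thm_trace} (the trace operator $\tra$ on $A$ is well-defined and $L^p$ bounded, with $\cc^\infty(\mba)$ dense), and the generalized Poincar\'e inequality of Proposition \ref{pro_gen_poinc1} all apply. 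Set $\F:=\{u\in W^{1,p}(M):\tra\, u=0\text{ on }A\}$; this is a closed linear subspace of $W^{1,p}(M)$ by continuity of $\tra$, and the feasible set in (\ref{eq_argmin}) is the affine set $f+\F$. Minimizing $||\pa u||_{L^p(M)}$ over $u\in f+\F$ is the same as minimizing $||\pa f - v||_{L^p(M)}$ over $v\in E:=\pa(\F)\subset \lbf^p(M)$, after the substitution $v=\pa(f-u)=-\pa u+\pa f$.

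Second, I would check that $E=\pa(\F)$ is a \emph{closed} linear subspace of $\lbf^p(M)$, which is exactly what is needed to invoke the quoted result \cite{fort} giving a unique nearest point $P_E(\pa f)$. Closedness of the image of $\pa$ on all of $W^{1,p}(M)$ is Corollary \ref{cor_pa_nonrel}; to get closedness of the image of the subspace $\F$ one combines this with the generalized Poincar\'e inequality (\ref{ineq_poin_pig}) applied on a nonempty subanalytic piece of $A$, which controls $||u||_{L^q(M)}$ by $||\pa u||_{L^p(M)}$ once $\tra\, u=0$ on $A$; hence on $\F$ the seminorm $||\pa u||_{L^p}$ is an equivalent norm, $\F$ is complete for it, and $\pa(\F)$ is therefore closed in $\lbf^p(M)$. (When $A$ meets $M$ itself, so that $\F$ contains no nonzero constants, this is immediate; in general one still has the inequality because $A\ne\emptyset$ and $M$ is connected.) Given this, uniform convexity of $\lbf^p(M)$ for $p\in(1,\infty)$ yields a unique $w:=P_E(\pa f)\in E$, say $w=\pa u_0$ with $u_0\in\F$; then $u:=f-u_0$ is feasible and $||\pa u||_{L^p(M)}=||\pa f - w||_{L^p(M)}$ is the minimum. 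Uniqueness of the minimizer $u$ (not merely of $\pa u$) follows because two minimizers $u_1,u_2$ have $\pa u_1=\pa u_2=w$ by uniqueness of $P_E$, so $\pa(u_1-u_2)=0$; since $M$ is connected, $u_1-u_2$ is a constant, and since $\tra\,(u_1-u_2)=0$ on the nonempty set $A$ this constant is zero (exactly the injectivity of $\pa$ on the relevant space, as in the last line of the proof of Proposition \ref{pro_gen_poinc1}). Finally, $u\in W^{1,p}(M)$ with $p$ large, so $u\in\check{\cc}^{0,\alpha}(M)$ by Theorem \ref{cor_morrey}, i.e.\ $u$ is H\"older continuous with respect to the inner metric.

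The main obstacle I expect is the closedness of $E=\pa(\F)$ in $\lbf^p(M)$: Corollary \ref{cor_pa_nonrel} gives closedness of $\pa(W^{1,p}(M))$, but restricting to the subspace $\F$ with a prescribed trace condition on a possibly positive-codimension set $A$ requires the Poincar\'e-type control (\ref{ineq_poin_pig}) to turn $||\pa\cdot||_{L^p}$ into a genuine norm on $\F$ and to show $\F$ is a Banach space for it; one must be a little careful that the hypothesis ``$A$ nonempty subanalytic subset of $\mba$'' is enough to apply Proposition \ref{pro_gen_poinc1}, and that the trace $\tra$ used there is the same multi-valued operator of (\ref{eq_tr}). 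The rest is a routine packaging of uniform convexity \cite{clarkson,fort} plus Morrey's embedding and the connectedness of $M$.
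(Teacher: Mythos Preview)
Your proposal is correct and follows essentially the same route as the paper: define $Q=\{u\in W^{1,p}(M):\tra\, u=0\text{ on }A\}$, use the generalized Poincar\'e inequality (Proposition \ref{pro_gen_poinc1}) to see that $E=\pa(Q)$ is closed in $\lbf^p(M)$, apply uniform convexity to get the unique nearest point $P_E(\pa f)$, set $u=f-w$ with $\pa w=P_E(\pa f)$, and conclude H\"older continuity from Morrey's embedding. Your treatment of uniqueness (arguing that two minimizers differ by a constant with vanishing trace on the nonempty set $A$) is slightly more explicit than the paper's, which leaves this step implicit in the phrase ``the element of $Q$ that satisfies $\pa w=P_E(\pa f)$''.
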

\begin{proof} Fix a nonempty subanalytic subset $A$ of $\mba$  and a function  $f\in W^{1,p}(M)$.
 Set for simplicity  
 $$Q:=\{u\in W^{1,p}(M): \tra\, u = 0\mbox{ on $A$}\} ,$$
 where $p\in [1,\infty)$ is sufficiently large for Theorem \ref{cor_morrey} to apply. 
Let 
 $E$ be the image of the mapping $\pa :Q\to \lbf^p(M), u \mapsto \pa u$. By Proposition \ref{pro_gen_poinc1}, the space $E$ is closed. As a matter of fact, there is a unique element $P_E(\pa f)\in E$ such that
 \begin{equation*}\label{eq_fPE}
||\pa f-P_E(\pa f)||_{L^p(M)} \le ||\pa f- \pa v||_{L^p(M)},
 \end{equation*}
 for all $v\in Q$. Let us thus define a function $u$ on $M$ by setting
 $$u(x):=f(x)-w(x),$$
  where $w$ is the element of $ Q$ that satisfies $\pa w=P_E(\pa f)$. 
 By Morrey's embedding (Theorem \ref{cor_morrey}), $u$ must be H\"older continuous on $M$  with respect to the inner metric. The uniqueness of the solution directly comes down from the uniqueness of the distance minimizer in uniformly convex Banach spaces.
\end{proof}
\begin{rem}\label{rem_pb}$ $
  This theorem does not assume that $f$  extends continuously to $\mba$.  Due to our generalized Morrey's embedding, any $f\in W^{1,p}(M)$, with $p$ large, nevertheless has to be continuous at the points at which  $M$ is connected. The same holds for the solution $u$. In particular, if $M$ is normal then $u$ extends  to a function on $\mba$ which is H\"older continuous with respect to the euclidean metric. 
\end{rem}

 Since $A$ may have nonempty interior in $M$, it is natural (and useful for applications) to wonder whether the solution provided by this theorem coincides on $M':=M\setminus \adh A$ with the minimizer corresponding to this manifold $M'$ subject to the constraint $\tra \, u =\tra\, f$ on $A':=A\cap \delta M'$. The answer is positive, and follows from Morrey's embedding together with the following proposition which is of its own interest. 

\begin{pro}\label{pro_ext}
 Let $Z$ be a nowhere dense subanalytic subset of $M$  and let $u \in W^{1,p}(M\setminus Z)$, $p\in [1,\infty]$. If $u$ extends to a continuous function on $M$ then $u\in W^{1,p}(M)$. 
\end{pro}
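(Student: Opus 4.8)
The plan is to reduce the statement to a purely local question near $Z$ and then show that the distributional gradient of the continuous extension of $u$ picks up no singular contribution supported on $Z$. First I would fix notation: denote by $\tilde u$ the continuous extension of $u$ to $M$, which by hypothesis restricts to the given $W^{1,p}$ function on $M\setminus Z$. Since $Z$ is nowhere dense and subanalytic, $\tilde u$ is already a well-defined element of $L^p(M)$ (it agrees almost everywhere with $u$, and $|M\cap Z|=0$ because $Z$ is nowhere dense subanalytic, hence of dimension $<m$). The entire content of the proposition is thus that the distributional gradient $\pa\tilde u$, computed on all of $M$, coincides with the $L^p$ vector field $\pa u$ defined on $M\setminus Z$; equivalently, that for every $\varphi\in\cc_0^\infty(M)$ one has $\langle\tilde u,\nabla\beta\rangle=-\langle\pa u,\beta\rangle$ for all $\beta\in\cbb_0^\infty(M)$, with no boundary term along $Z$.

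The key step is a cutoff argument that kills a neighborhood of $Z$ in $M$ at negligible cost. Because $Z$ is a nowhere dense subanalytic subset of the $m$-manifold $M$, it has codimension at least $1$ in $M$; more importantly, $Z\cap M$ is a compact-in-$M$-locally-finite union of subanalytic strata of dimension $\le m-1$. I would construct, for each small $\delta>0$, a smooth cutoff function $\chi_\delta$ on $M$, equal to $0$ on a neighborhood of $Z$ and equal to $1$ outside a slightly larger neighborhood, with $\|\pa\chi_\delta\|$ controlled so that $\pa\chi_\delta\cdot\beta\to 0$ in the relevant sense — the standard construction uses $\chi_\delta = \psi(\dist(\cdot,Z)/\delta)$ for a fixed bump profile $\psi$, the point being that $|\pa\chi_\delta|\lesssim \delta^{-1}\mathbf 1_{\{\dist(\cdot,Z)\le\delta\}}$ and $\hn^m(\{\dist(\cdot,Z)\le\delta\}\cap K)=O(\delta)$ on any compact $K$ (since $Z$ has dimension $\le m-1$ and is subanalytic, so its tubular neighborhoods have the expected volume growth). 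Testing the weak-derivative identity on $\chi_\delta\beta$ (which is a legitimate test vector field supported in $M\setminus Z$, where $u$ genuinely lies in $W^{1,p}$), we get
\begin{equation*}
\langle u,\nabla(\chi_\delta\beta)\rangle=-\langle\pa u,\chi_\delta\beta\rangle,
\end{equation*}
and $\nabla(\chi_\delta\beta)=\chi_\delta\nabla\beta+\pa\chi_\delta\cdot\beta$. Letting $\delta\to 0$: the terms $\langle u,\chi_\delta\nabla\beta\rangle$ and $\langle\pa u,\chi_\delta\beta\rangle$ converge to $\langle\tilde u,\nabla\beta\rangle$ and $\langle\pa u,\beta\rangle$ by dominated convergence (here $\tilde u\in L^\infty_{loc}$ bounded near any point, since it is continuous, so $\tilde u\,\nabla\beta\in L^1$), while the error term $\langle u,\pa\chi_\delta\cdot\beta\rangle$ must be shown to vanish.

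The main obstacle is precisely the estimate on this last term, $\big|\langle u,\pa\chi_\delta\cdot\beta\rangle\big|\le \|\beta\|_{L^\infty}\int_{\{\dist(\cdot,Z)\le\delta\}\cap K}|u|\,|\pa\chi_\delta|$, which I would bound by $C\,\delta^{-1}\,\|u\|_{L^\infty(K)}\,\hn^m(\{\dist(\cdot,Z)\le\delta\}\cap K)\le C'\|\tilde u\|_{L^\infty(K)}$ — this is \emph{bounded but not obviously small}. To get it to $0$ one uses the continuity of $\tilde u$ more carefully: if $\tilde u$ vanished on $Z$ the factor $\|u\|_{L^\infty}$ on the thin tube would be $o(1)$ and we would win immediately; in general one subtracts off locally constant values of $\tilde u$ on $Z$. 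Concretely, around a given point $x_0\in M$ pick $c=\tilde u(x_0)$ and apply the cutoff argument to $u-c$, which is also $W^{1,p}$ on $M\setminus Z$ with the same gradient and which has $L^\infty$ norm $o(1)$ on a small tube around $Z\cap$(small ball) by continuity of $\tilde u$; a partition of unity over a locally finite cover then assembles the global statement. The only subanalytic-geometry input needed is the volume bound $\hn^m(\{\dist(\cdot,Z)\le\delta\}\cap K)=O(\delta)$ for a subanalytic $Z$ of dimension $\le m-1$ inside the manifold $M$, which is standard (it follows, e.g., from a local bi-Lipschitz triangulation of $M$ compatible with $Z$, or from the cell decomposition together with the coarea inequality for the distance function). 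The endpoint $p=\infty$ needs no change, since all the integrals above were estimated through $L^\infty$ bounds and local volumes.
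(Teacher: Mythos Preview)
Your argument is essentially correct and follows a genuinely different route from the paper. The paper stratifies $Z$: near a point of the regular $(m-1)$-dimensional locus $Z_{reg}^{m-1}$ it straightens $M$ to a ball cut by a hyperplane into two half-balls $U,V$, applies Stokes' formula on each side, and uses the continuity of $\tilde u$ to see that the two boundary integrals over $Z_{reg}^{m-1}$ cancel (opposite orientations, identical traces). This establishes $u\in W^{1,p}(M\setminus B)$ with $B=Z\setminus Z_{reg}^{m-1}$ of dimension $\le m-2$, after which removability of codimension-$\ge 2$ sets (cited from Maz'ya) finishes the proof. Your approach instead handles all of $Z$ at once via the cutoff $\chi_\delta$ and the tube-volume bound; the advantage is that it is self-contained (no black-box removability), the cost is that you must manage the $O(1)$ error by hand. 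One point deserves more care than you give it: after localizing to $B(x_0,r)$ and subtracting $c=\tilde u(x_0)$, the cutoff argument yields only $|\langle u,\nabla\beta\rangle+\langle\pa u,\beta\rangle|\lesssim \|\beta\|_{L^\infty}\,\omega_{\tilde u}(r)\,\hn^{m-1}(Z\cap 2B(x_0,r))$, which is small but not zero for fixed $r$; to conclude you must let the mesh of the partition of unity tend to zero and observe that the sum $\sum_i\hn^{m-1}(Z\cap 2B_i)$ stays bounded by $\hn^{m-1}(Z\cap K')$ (bounded overlap), so the total is $\lesssim\omega_{\tilde u}(r)\to 0$. Your sentence ``a partition of unity over a locally finite cover then assembles the global statement'' hides this $r\to 0$ limit. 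The paper's Stokes argument avoids this bookkeeping because the boundary terms cancel \emph{exactly}, not just asymptotically.
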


\begin{proof} We first check that $u$ is ``differentiable'' at every  of $Z^{m-1}_{reg}$ (the set of points at which $Z$ is a smooth submanifold of dimension $(m-1)$), i.e. that for every $\xo$ in this set and every smooth vector field $\varphi\in \cbb_0^{\infty}(M)$ supported in the vicinity of this point, we have
\begin{equation}\label{eq_u_diff}
 <\pa u,\varphi>=-<u,\nabla \varphi>,
\end{equation}
where $\pa u$ stands for the vector field defined on $M$ (almost everywhere, since $Z$ has measure zero) as the gradient of $u\in W^{1,p}(M\setminus Z)$.

Let $\xo\in Z^{m-1}_{reg}$. Since $M$ is at such a point $\xo$ the union of two manifolds with boundary $Z$ and our problem is local, we can assume that $M$ is a ball in $\R^m$ centered at $\xo$ and $Z$ is a hyperplane passing through $\xo$. This hyperplane divides $M$ into two connected components, say $U$ and $V$.   As $u_{|U}$ and $u_{|V}$ respectively belong to $W^{1,p}(U)$ and $W^{1,p}(V)$, and since $U$ and $V$ induce opposite orientations on $Z_{reg}^{m-1}$, to which $u$ extends continuously, we have by Stokes' formula for $\varphi$ as above:
$$ \int_U \pa u\cdot \varphi=-\int_U u  \nabla \varphi+\int_{Z^{m-1}_{reg}} u \varphi,$$
and
$$  \int_V \pa u\cdot \varphi=-\int_V u  \nabla \varphi -\int_{Z^{m-1}_{reg}}  u \varphi. $$
  Adding these two equalities clearly establishes  (\ref{eq_u_diff}). 
  
Let $B:=Z\setminus Z_{reg}^{m-1}$.
Equality (\ref{eq_u_diff}) shows that  $ u \in W^{1,p}(M\setminus B)$, which, since  $\dim B \le m-2,$  yields that $u\in W^{1,p}(M)$ (see for instance \cite[section 1.1.18]{mazya}).
\end{proof}

\medskip

 We are going to establish (Lemma \ref{lem_deltap} and Corollary \ref{cor_c1reg}) that it satisfies a Neumann type condition at the points of $\delta M\setminus \adh A$, as well as $\Delta_p u=0$ on  $M\setminus \adh A$, where $\Delta_p$ is the $p$-Laplacian (see below), which will yield that $u$ is a $\cc^1$ function on $M\setminus \adh A$ \cite{E,le,to,ur}.

  Although the argument that we shall use is indeed just an adaptation to our framework of some classical arguments, we provide details, which will give us the opportunity to shed light on the role played by the assumption ``$p$ large''.  Let us first recall  the definition of the $p$-Laplacian.
 Given $\beta\in \lbf^p(M)$, we let $$\sharp_p \beta:=\frac{|\beta|^{p-2}}{||\beta||_{L^p(M)}^{p-2}}\cdot \beta,$$
as well as for $u\in W^{1,p}(M)$
$$\Delta_p u= \nabla \sharp_p \pa u.$$
In the case $p=2$, we of course get the usual Laplace operator.

The lemma below  generalizes to subanalytic manifolds a well-known property of the $p$-Laplacian  (see for instance \cite{degen}).
\begin{lem}\label{lem_deltap}Let $A$ be a nonempty subanalytic subset of $\mba$ and let $B:=\delta M\setminus \adh A$.
 For all $p$ sufficiently large and $u\in W^{1,p}(M)$, the following conditions are equivalent:
 \begin{enumerate}
  \item $\Delta_p u=0$ on $M\setminus \adh A$, and $\,\sharp_p \pa u\in \wbf_\nabla^{1,p'}(M, B)$.
  \item For all $v\in W^{1,p}(M)$ satisfying $ \tra\, v= \tra\, u$ on $A$, we have $$||\pa u||_{L^p(M)} \le ||\pa v||_{L^p(M)} .$$
 \end{enumerate}
\end{lem}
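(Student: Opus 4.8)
The plan is to prove the equivalence $(1)\Leftrightarrow(2)$ by exploiting convexity of the functional $v\mapsto \|\pa v\|_{L^p(M)}^p$, just as in the classical Euclidean case, the only new ingredients being the trace formula (\ref{eq_trace_theorem}) and Corollary \ref{cor_AB} which allow us to integrate by parts in our singular setting (this is where the assumption ``$p$ large'' is used, via $p_M(E)\ge p'$-type bounds, together with the fact that $\tra$ is $L^p$-bounded for $p$ large so that $\tra\,v=\tra\,u$ on $A$ makes sense and defines an affine subspace of $W^{1,p}(M)$).

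First I would set $E:=\{w\in W^{1,p}(M):\tra\,w=0_{\R^l}\text{ on }A\}$, a closed linear subspace by Proposition \ref{pro_gen_poinc1}, so that the competitors $v$ in $(2)$ are exactly $u+w$ with $w\in E$. Condition $(2)$ says that $u$ minimizes $w\mapsto\Phi(u+w):=\|\pa(u+w)\|_{L^p(M)}^p$ over $E$. Since $\Phi$ is convex and Gâteaux-differentiable with derivative $\langle p\,|\pa u|^{p-2}\pa u,\pa w\rangle$, the minimization over the linear space $E$ is equivalent to the Euler--Lagrange condition
\begin{equation}\label{eq_EL}
\langle |\pa u|^{p-2}\pa u,\pa w\rangle=0\qquad\text{for all }w\in E,
\end{equation}
which after normalizing is $\langle \sharp_p\pa u,\pa w\rangle=0$ for all $w\in E$. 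Here I would note that $E$ contains $\cc_0^\infty(M)$, so testing (\ref{eq_EL}) against $\varphi\in\cc_0^\infty(M\setminus\adh A)$ gives $\nabla\sharp_p\pa u=0$ as a distribution on $M\setminus\adh A$, i.e. $\Delta_p u=0$ there; moreover $|\pa u|^{p-2}\pa u\in\lbf^{p'}(M)$ (for $p$ large, $\pa u$ is bounded by Morrey, so this is clear) and (\ref{eq_EL}) shows its distributional divergence is $0\in L^{p'}(M)$, hence $\sharp_p\pa u\in\wbf_\nabla^{1,p'}(M)$.

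The remaining point, and the main obstacle, is to pass between the global condition (\ref{eq_EL}) for all $w\in E$ and the two-part condition $(1)$: namely, that (\ref{eq_EL}) holds for all $w\in E$ if and only if $\nabla\sharp_p\pa u=0$ on $M\setminus\adh A$ and $\sharp_p\pa u\in\wbf_\nabla^{1,p'}(M,B)$ with $B=\delta M\setminus\adh A$. The forward direction of this I would get by observing that $E=W^{1,p}(M,A)$ (the inclusion $\subset$ is Theorem \ref{thm_trace}/the remark following Corollary \ref{cor_pbig_vanishing}, or one argues directly that $\tra\,w=0$ on $A$ forces (\ref{eq_vanishing}) against divergence-free test fields supported near $A$), so that (\ref{eq_EL}) is precisely the defining relation (\ref{eq_vanishing}) for $\sharp_p\pa u$ to lie in $\wbf_\nabla^{1,p'}(M,B)$. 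For the converse I would use the trace formula: if $\sharp_p\pa u\in\wbf_\nabla^{1,p'}(M,B)$ and $\nabla\sharp_p\pa u=0$ on $M\setminus\adh A$, then by Corollary \ref{cor_AB} (applied with the roles reversed, for $p'$ large so that $p$ is below the relevant codimension bound) equality (\ref{eq_vanishing}) holds for $\sharp_p\pa u$ against every $w\in W^{1,p}(M,A)=E$, which is exactly (\ref{eq_EL}). Combining both directions with the convexity argument above yields the equivalence; I expect the bookkeeping of codimension conditions ensuring Corollary \ref{cor_AB} applies (which needs $p'\le p_M(\cdot)$, automatic once $p$ is large since $p'$ is then close to $1$) and the identification $E=W^{1,p}(M,A)$ for large $p$ to be the only genuinely delicate steps.
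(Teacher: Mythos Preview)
Your overall architecture—convexity of $v\mapsto\|\pa v\|_{L^p}^p$, the Euler--Lagrange condition $\langle\sharp_p\pa u,\pa w\rangle=0$ for $w$ in the constraint space, then H\"older—is exactly the paper's. But the key integration-by-parts step in $(1)\Rightarrow(2)$ is misattributed, and this is not cosmetic. Both results you invoke, the trace formula (\ref{eq_trace_theorem}) and Corollary \ref{cor_AB}, are \emph{small-$p$} results: (\ref{eq_trace_theorem}) is stated for $p\le 2$ and Corollary \ref{cor_AB} for $p\le p_M(A)$. Your proposed ``role reversal'' does not help: the codimension hypothesis in Corollary \ref{cor_AB} sits on the exponent of the \emph{function} space, and there is no dual version in the paper that moves it to $p'$ (Theorem \ref{thm_dense_MBE} and Lemma \ref{lem_densite} again bound the function-side exponent). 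The tool the paper actually uses is Corollary \ref{cor_pbig_vanishing}, which is precisely the large-$p$ statement you need: for $p$ sufficiently large, (\ref{eq_vanishing}) holds for any $w\in W^{1,p}(M)$ with $\tra\,w=0$ on $A$ against any $\beta\in\wbf_\nabla^{1,p'}(M,\delta M\setminus\adh A)=\wbf_\nabla^{1,p'}(M,B)$. Applying this to $w=u-v$ and $\beta=\sharp_p\pa u$ gives $\langle\pa(u-v),\sharp_p\pa u\rangle=0$, and then H\"older finishes as you wrote.

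For $(2)\Rightarrow(1)$, your detour through the identification $E=W^{1,p}(M,A)$ is heavier than needed and leans on a fact the paper explicitly flags as ``technical'' and does not prove (the Remark after Corollary \ref{cor_pbig_vanishing}). The paper avoids this entirely: it tests the Euler--Lagrange relation only against $w\in W^{1,p}_{\mba\setminus\adh A}(M)$, which is trivially contained in $E$. This already yields $\Delta_p u=0$ on $M\setminus\adh A$, and the membership $\sharp_p\pa u\in\wbf_\nabla^{1,p'}(M,B)$ follows by observing that the defining relation for that space is local near $B$, so test functions may be taken compactly supported in $(M\setminus\adh A)\cup B$. Also, your aside that ``$E$ contains $\cc_0^\infty(M)$'' is false when $A$ meets $M$; you correctly restrict to $\cc_0^\infty(M\setminus\adh A)$ immediately after, but the earlier claim that the distributional divergence is $0\in L^{p'}(M)$ on all of $M$ does not follow from that restriction.
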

\begin{proof}
A function $u$ satisfying $(2)$ is a minimizer of $||\pa v||_{L^p(M)}^p$, for $v$ such that  $ \tra\, (v-u)= 0$ on $A$, and therefore a critical point of the restriction of this functional to the space of such $v$. Hence, for all $w$ in $$W^{1,p}_{\mba\setminus \adh A}(M)=W^{1,p}_{(M\setminus \adh A)\cup B}(M),$$ we have
 $$<|\pa u|^{p-2} \pa u , \pa w >=0,$$
  which means that  $\Delta_p u=0$ on $M\setminus \adh A$. Moreover, vanishing on $B$ is a local condition in the sense that it only needs to be tested on functions that are supported in the vicinity of each point of $B$, that therefore may be assumed to be compactly supported in $ (M\setminus \adh A)\cup B$. This equality thus also yields that $\sharp_p \pa u \in \wbf^{1,p'}_\nabla(M,B)$.
  
  Conversely, if $u\in W^{1,p}(M)$ satisfies $(1)$, then,  by Corollary \ref{cor_pbig_vanishing}, for all $v\in W^{1,p}(M)$ satisfying $\tra\, v= \tra\, u$ on $A$,  we have
 \begin{equation}\label{eq_u-v}  <\pa (u-v) ,\sharp_p \pa u>= 0,\end{equation}
 which, by H\"older inequality, shows that
  \begin{equation}\label{eq_cauchyschwartz}
   ||\pa u||_{L^p(M)}^2=<\pa u ,\sharp_p \pa u>\overset{(\ref{eq_u-v})}= <\pa v ,\sharp_p \pa u>\le ||\pa v||_{L^p(M)}||\pa u||_{L^p(M)} ,\end{equation}
  yielding $(2)$.
\end{proof}

As emphasized in the paragraph that follows the proof of Proposition \ref{pro_ext}, this, together with Theorem \ref{thm_opt},  has the following consequence \cite{E,le,to,ur}:
\begin{cor}\label{cor_c1reg}
 The unique solution of problem (\ref{eq_argmin}) provided by Theorem \ref{thm_opt} is  $\cc^1$ on $M\setminus \adh A$. Its derivative is locally H\"older continuous on this set.
\end{cor}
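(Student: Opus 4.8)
The plan is to combine two ingredients: first, the regularity theory for $p$-harmonic functions (which gives $\cc^1$ regularity and locally H\"older continuous derivative on any region where the equation $\Delta_p u = 0$ holds on a genuine manifold), cited as \cite{E,le,to,ur}; and second, the identification, via Lemma \ref{lem_deltap} and Morrey's embedding, of the solution furnished by Theorem \ref{thm_opt} as a function satisfying $\Delta_p u = 0$ on $M\setminus \adh A$. The point is that $M\setminus\adh A$ is itself a bounded subanalytic manifold (without the constraint set interfering), and near each of its points the equation $\Delta_p u = 0$ is a local statement about a $\cc^\infty$ manifold, so the classical interior regularity results apply verbatim.

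Concretely, I would first recall that the solution $u\in W^{1,p}(M)$ of (\ref{eq_argmin}) produced by Theorem \ref{thm_opt} satisfies $\tra\, u = \tra\, f$ on $A$ and minimizes $||\pa u||_{L^p(M)}$ among all competitors with that trace; hence, a fortiori, condition $(2)$ of Lemma \ref{lem_deltap} holds. Applying Lemma \ref{lem_deltap} (for $p$ sufficiently large, which we may assume, enlarging the threshold of Theorem \ref{thm_opt} if necessary), we obtain $\Delta_p u = 0$ on $M\setminus\adh A$. Next I would invoke Morrey's embedding (Theorem \ref{cor_morrey}), which guarantees that $u$ is H\"older continuous on $M$ with respect to the inner metric, in particular continuous; so $u$ is a continuous weak solution of the $p$-Laplace equation on the open manifold $M\setminus\adh A$. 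The interior regularity theory \cite{E,le,to,ur} then yields that $u$ is $\cc^1$ on $M\setminus\adh A$, with derivative locally H\"older continuous there, since this is an interior statement that only sees small $\cc^\infty$ coordinate balls inside $M\setminus\adh A$, away from both the singular locus $M_\sigma$ and the frontier $\delta M$.

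I expect the main (and essentially only) subtlety to be the bookkeeping around the phrase ``$p$ sufficiently large'': one must choose $p$ large enough that Theorem \ref{thm_opt}, Theorem \ref{cor_morrey}, and Lemma \ref{lem_deltap} all apply simultaneously, which is harmless since each requires only a lower bound on $p$. A second, more conceptual, point worth a sentence is that the classical regularity results \cite{E,le,to,ur} are stated for open subsets of $\R^m$ (or for $\cc^\infty$ Riemannian manifolds), whereas here the ambient object is a subanalytic submanifold of $\R^n$; but this causes no difficulty, because $M\setminus\adh A$ is a $\cc^\infty$ manifold and the $p$-Laplace equation, written in local coordinates via a volume form, is precisely the Euler--Lagrange equation to which those results apply. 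Thus the proof is short: cite Lemma \ref{lem_deltap} to get $\Delta_p u = 0$ on $M\setminus\adh A$, cite Theorem \ref{cor_morrey} to get continuity, and cite \cite{E,le,to,ur} to upgrade to $\cc^1$ with locally H\"older derivative.

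\begin{proof}
Let $p\in(1,\infty)$ be large enough that Theorem \ref{thm_opt}, Theorem \ref{cor_morrey}, and Lemma \ref{lem_deltap} all apply, and let $u\in W^{1,p}(M)$ be the solution of (\ref{eq_argmin}). By definition $\tra\, u=\tra\, f$ on $A$ and $||\pa u||_{L^p(M)}\le ||\pa v||_{L^p(M)}$ for every $v\in W^{1,p}(M)$ with $\tra\, v=\tra\, f=\tra\, u$ on $A$. Thus condition $(2)$ of Lemma \ref{lem_deltap} holds, and therefore so does condition $(1)$; in particular $\Delta_p u=0$ on $M\setminus \adh A$. Moreover, by Morrey's embedding (Theorem \ref{cor_morrey}), $u$ is H\"older continuous on $M$ with respect to the inner metric, hence continuous. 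Consequently $u$ is a continuous weak solution of the $p$-Laplace equation on the bounded subanalytic $\cc^\infty$ manifold $M\setminus \adh A$. Regularity of such solutions is a purely interior question, which near each point of $M\setminus \adh A$ reduces, through a smooth chart and the volume form, to the classical situation on an open subset of $\R^m$; the interior regularity theory for the $p$-Laplacian \cite{E,le,to,ur} then yields that $u$ is $\cc^1$ on $M\setminus \adh A$ with derivative locally H\"older continuous on this set.
\end{proof}
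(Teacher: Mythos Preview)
Your proposal is correct and follows essentially the same route as the paper: both invoke Lemma \ref{lem_deltap} to deduce $\Delta_p u=0$ on $M\setminus\adh A$ from the minimizing property, and then appeal to the interior regularity theory \cite{E,le,to,ur} to conclude $\cc^{1}$ regularity with locally H\"older derivative. The only minor difference is that you insert an appeal to Morrey's embedding to secure continuity of $u$ before invoking \cite{E,le,to,ur}; this is harmless but superfluous, since those interior regularity results apply to any $W^{1,p}_{loc}$ weak solution of $\Delta_p u=0$ without a continuity hypothesis.
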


\end{section}

\end{document}